\newcommand{\C}{\mathbb{C}}
\newcommand{\Z}{\mathbb{Z}}
\newcommand{\CA}{\mathcal{A}}
\newcommand{\CB}{\mathcal{B}}
\newcommand{\CD}{\mathcal{D}}
\newcommand{\CE}{\mathcal{E}}
\newcommand{\CF}{\mathcal{F}}
\newcommand{\CG}{\mathcal{G}}
\newcommand{\CH}{\mathcal{H}}
\newcommand{\CI}{\mathcal{I}}
\newcommand{\CK}{\mathcal{K}}
\newcommand{\CO}{\mathcal{O}}
\newcommand{\CP}{\mathcal{P}}
\newcommand{\CR}{\mathcal{R}}
\newcommand{\CS}{\mathcal{S}}
\newcommand{\CT}{\mathcal{T}}
\newcommand{\CV}{\mathcal{V}}
\newcommand{\FP}{{\mathfrak{p}}}
\newcommand{\FQ}{{\mathfrak{q}}}
\newcommand{\qcoh}{\mathcal{QC}\mathfrak{oh}}
\newcommand{\coh}{\mathcal{C}\mathfrak{oh}}
\newcommand{\Mod}{\mathcal{M}\mathfrak{od}}
\newcommand{\Rep}{\mathcal{R}\mathfrak{ep}}
\DeclareMathOperator{\chr}{char}
\DeclareMathOperator{\End}{End}
\DeclareMathOperator{\Id}{Id}
\DeclareMathOperator{\Spc}{Spc}
\DeclareMathOperator{\Spec}{Spec}
\DeclareMathOperator{\supp}{supp}
\DeclareMathOperator{\supph}{supph}
\newcommand{\beast}{\begin{eqnarray*}}
\newcommand{\eeast}{\end{eqnarray*}}
\newcommand{\lan}{\langle}
\newcommand{\ran}{\rangle}
    \newtheorem{theorem}{Theorem}[section]
    \newtheorem{lemma}[theorem]{Lemma}
    \newtheorem{proposition}[theorem]{Proposition}
    \newtheorem{corollary}[theorem]{Corollary}
	\theoremstyle{definition}
	\newtheorem{definition}[theorem]{Definition}
	\theoremstyle{remark}
	\newtheorem{remark}[theorem]{Remark}
	\newtheorem{example}[theorem]{Example}
\title{Triangular spectrum of some triangulated categories}
\author{Umesh V. Dubey\thanks{Institute of Mathematical Sciences, CIT Campus,
Tharamani, Chennai 600113, India} \and Vivek M. Mallick\thanks{Centre de Recerca
Matem\`atica, Campus de Bellaterra, Edifici C, 08193 Bellaterra (Barcelona),
Spain}}
\date{}
\begin{document}
\maketitle
\section{Introduction}

This paper studies the prime spectrum of two tensor triangulated
categories.  Triangulated categories have been one of the most
influential objects in mathematics. Introduced by Grothendeick and
Verdier to study Serre duality in a relative setting, this idea was
soon developed by Verdier and Illusie who studied the derived category
of the abelian category of coherent sheaves, and the triangulated
category of perfect complexes respectively. Slowly the abstract
homological construction of triangulated categories permeated into
other subjects like topology, modular representation theory and even
Kasparov's KK theory. Balmer's paper \cite{PB2} gives a nice summary
of the elegant history.
  
In algebraic geometry, triangulated categories mostly appear as the
derived category of the abelian category of coherent sheaves on a
variety and as the category of perfect complexes on a variety. The
later category, as was observed by Neeman \cite{Neeman1}, are just the
compact objects of the derived category of the abelian category of
quasi-coherent sheaves (in case the scheme is quasi compact and
separated). From now on we shall call the derived category of the
category of coherent sheaves to be the derived category of the
variety. Gabriel \cite{Gabriel} and Rosenberg \cite{Rosenberg} proved
that the category of quasi coherent sheaves completely determine the
underlying variety. Bondal and Orlov \cite{Orlov2} proved that a
smooth variety can be reconstructed from the derived category of
coherent sheaves provided that either the canonical bundle or the
anti-canonical bundle is ample. But the ampleness condition here is
crucial, as Mukai \cite{Mukai} gave an example of two nonisomorphic
varieties whose derived categories are equivalent.
  
Balmer \cite{PB2} proved that in addition to the triangulated
structure on a derived category, if we also consider the tensor
structure induced by the tensor structure in the category of coherent
sheaves, we have enough information to reconstruct the variety. He
gave a method to reconstruct, by constructing ``the Spec'' of the
tensor triangulated category. The definition of Spec is quite general
and applies to any tensor triangulated category. One question that
naturally arises is how good is Spec as an invariant of the tensor
triangulated category? It turns out that there do exist pairs of
tensor triangulated categories which have isomorphic Specs (isomorphic
as ringed spaces). We give two such examples. This raises the question
of whether one can define a finer geometric invariant. Some possible
answers are discussed in the first author's thesis with HBNI.
   
In section \ref{basics}, we recall the definition of Spec. We also
recall some facts about $G$ sheaves and prove some lemmas which shall
be useful in the next section.
   
In section \ref{sec:dercatshv} we compute the Spec of the derived
category of the abelian category of coherent $G$-equivariant sheaves
on some smooth quasi-projective scheme $X$. Since the scheme is quasi
projective there exists an orbit space, see \cite{Mumford}, which we
denote as $X/G$.  As $G$ is a finite group and hence we get a finite
map $\pi : X \rightarrow X / G$ which is also a perfect morphism.
Recall that a $G$ equivariant or $G$ linearized sheaf is defined as
follows
\begin{definition} \label{def:gsheaf}
A $G$-sheaf (or $G$-equivariant sheaf or an equivariant sheaf with
respect to the group $G$) on $X$ is a sheaf $\CF$ together with isomorphisms
$\rho_g : \CF \to g^*\CF$ for all $g \in G$ such that following diagram
\[
\xymatrix{ \CF \ar[r]^{\rho_h} \ar[drr]_{\rho_{gh}} &
  h^*\CF \ar[r]^{h^*\rho_g} & \ar@{=}[d] h^*g^*\CF \\
  & & (gh)^*\CF }
\]
is commutative for any pair $g, h \in G$. A $G$-sheaf is a pair $(\CF,
\rho )$.
\end{definition}
The category of coherent $G$-sheaves is denoted as $\coh^G(X)$ and for
simplicity we denote by $\CD^G(X)$, the bounded derived category of
coherent $G$-sheaves. Consider the affine map $\pi : X \rightarrow X /
G$. Then $\CD^G(X)$ admits a functor from $\CD^{per} (X/G)$,
\[
\pi^* : \CD^{per} (X/G) \rightarrow \CD^G(X) \text{.}
\]
Since we consider only quasi projective varieties therefore the
perfect complexes are nothing but bounded complexes of vector bundles.

We prove the following theorem.

\begin{theorem}
  Assume that the scheme $X$ is a smooth quasi projective variety over
  a field $k$ of characteristic $p$ with an action of a finite group
  $G$. Assume that the order of $G$ is coprime to $p$. The induced map
  \[
  \Spec(\pi^*) : \Spec(\CD^G(X)) \rightarrow \Spec(\CD^{per}(X/G))
  \]
  is an isomorphism of locally ringed spaces.
\end{theorem}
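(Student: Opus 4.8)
The plan is to identify both spectra with the orbit space $X/G$ compatibly with $\Spec(\pi^*)$, and then to match the structure sheaves. Since $X/G$ is quasi-projective, hence quasi-compact and quasi-separated with an ample family of line bundles, Balmer's reconstruction theorem identifies $\Spec(\CD^{per}(X/G))$ with $X/G$ as locally ringed spaces: the prime at a point $\bar x$ is $\FP_{\bar x}=\{N\in\CD^{per}(X/G):\bar x\notin\supp N\}$, and $\End$ of the unit in the localization at a basic open $V$ is $\Gamma(V,\mathcal O_{X/G})$. As $\pi^*$ is tensor-triangulated ($\pi$ being affine, $\pi^*$ is exact and strong monoidal), the map $\Spec(\pi^*)$ is defined, and it suffices to prove it is an isomorphism of locally ringed spaces; in view of the identification above this amounts to producing a homeomorphism $X/G\to\Spec(\CD^G(X))$ inverse to it and checking the structure sheaves match.

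First I would set up an equivariant support theory. Because $X$ is smooth, every $M\in\CD^G(X)$ has perfect underlying complex $UM$, so $\supp(UM)\subseteq|X|$ is closed and $G$-stable (as $M$ is equivariant); moreover $\supp(UM\otimes UN)=\supp(UM)\cap\supp(UN)$, the nonobvious inclusion reducing over the local ring $\mathcal O_{X,x}$ to the fact that the derived tensor of two nonzero perfect complexes is nonzero (base change to the residue field, then Nakayama). Setting $\supph(M)$ to be the image of $\supp(UM)$ in $|X/G|=|X|/G$, one checks that $(\supph,|X/G|)$ is a support datum: the intersection axiom descends from $X$ because the fibres of $\pi$ are single orbits, $\supph(\mathbf 1)=|X/G|$, and $\supph M=\emptyset$ forces $UM=0$, hence $M=0$ since the forgetful functor is conservative. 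This gives a continuous comparison map $f\colon|X/G|\to\Spec(\CD^G(X))$, $\bar x\mapsto\{M:(UM)_x\simeq 0\}$, the target being a prime again by the local-ring computation. Using that $\pi$ is surjective one gets $\supph(\pi^*N)=\supp N$, hence $(\pi^*)^{-1}(f(\bar x))=\FP_{\bar x}$; thus $\Spec(\pi^*)\circ f$ is Balmer's identification of $\Spec(\CD^{per}(X/G))$ with $X/G$, and in particular $f$ is injective.

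The crux is to show $f$ is a homeomorphism, equivalently that $\supph$ is a classifying support datum, i.e. that thick $\otimes$-ideals of $\CD^G(X)$ correspond to specialization-closed subsets of $|X/G|$; this is where $\gcd(|G|,p)=1$ does the essential work. I would use the induction functor $\mathrm{Ind}=\bigoplus_{g\in G}g^{*}(-)\colon\CD^{per}(X)\to\CD^G(X)$. The projection formula gives $\mathrm{Ind}(UN)\cong\mathrm{Ind}(\mathcal O_X)\otimes N$ for $N\in\CD^G(X)$; since $p\nmid|G|$, Maschke's theorem splits off the trivial summand of the regular representation, i.e. exhibits $\mathcal O_X=\mathbf 1$ as an equivariant direct summand of $\mathrm{Ind}(\mathcal O_X)\cong\mathcal O_X\otimes_k k[G]$, so every $M\in\CD^G(X)$ is a retract of $\mathrm{Ind}(UM)$. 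Now for $N\in\CD^G(X)$ the collection $\{Q\in\CD^{per}(X):\mathrm{Ind}(Q)\in\langle N\rangle_{\otimes}\}$ (here $\langle N\rangle_{\otimes}$ denotes the thick $\otimes$-ideal of $\CD^G(X)$ generated by $N$) is a thick $\otimes$-ideal of $\CD^{per}(X)$ — for the ideal property one uses that $\mathrm{Ind}(Q\otimes Q')$ is an equivariant direct summand of $\mathrm{Ind}(Q)\otimes\mathrm{Ind}(Q')$ — and it contains $UN$, as $\mathrm{Ind}(UN)\cong\mathrm{Ind}(\mathcal O_X)\otimes N\in\langle N\rangle_{\otimes}$. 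Hence if $\supph M\subseteq\supph N$, so that $\supp(UM)\subseteq\supp(UN)$, the non-equivariant Thomason classification on the smooth $X$ gives $UM\in\langle UN\rangle_{\otimes}\subseteq\CD^{per}(X)$, whence $\mathrm{Ind}(UM)\in\langle N\rangle_{\otimes}$, and retracting, $M\in\langle N\rangle_{\otimes}$. Therefore $\langle N\rangle_{\otimes}=\{M:\supph M\subseteq\supph N\}$; taking $N=\pi^{*}K_Z$ for a perfect complex $K_Z$ on $X/G$ with $\supp K_Z=Z$ (Thomason's lemma) realizes every closed $Z$ as $\supph(\pi^{*}K_Z)$ of an ideal generator. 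A routine reduction to irreducible components, using that $|X/G|$ is Noetherian, then promotes this to the bijection between thick $\otimes$-ideals of $\CD^G(X)$ and Thomason subsets of $|X/G|$ (all ideals being radical, as $\CD^G(X)$ is rigid), and Balmer's classification theorem gives that $f$ is a homeomorphism.

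Finally, $\pi^{*}$ induces a morphism $\mathcal O_{\CD^{per}(X/G)}\to\Spec(\pi^{*})_{*}\mathcal O_{\CD^G(X)}$; since $\Spec(\pi^{*})$ is a homeomorphism compatible with the thick $\otimes$-ideals, on a basic open $V$ it is — via the equivariant form of the Thomason--Neeman localization theorem — the natural map $\Gamma(V,\mathcal O_{X/G})=\End_{\CD^{per}(X/G)_V}(\mathbf 1)\to\End_{\CD^G(X)_{\pi^{-1}V}}(\mathbf 1)=\Gamma(\pi^{-1}V,\mathcal O_X)^{G}$, an isomorphism by the very definition $\mathcal O_{X/G}=(\pi_{*}\mathcal O_X)^{G}$ of the quotient; as Balmer's structure sheaf has local stalks, $\Spec(\pi^{*})$ is then an isomorphism of locally ringed spaces. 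The main obstacle is the classification step in the third paragraph (together with the equivariant localization theorem invoked in the fourth); the rest is formal or a localization computation. Alternatively, the third paragraph can be carried out by passing to the compactly generated category $\CD^{G}_{qc}(X)$ and running Neeman's Bousfield-localization argument, with $\gcd(|G|,p)=1$ entering to make $(-)^{G}$ commute with coproducts and with the local-cohomology functors.
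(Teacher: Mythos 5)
Your proof is correct, and it takes a genuinely different route from the paper. Where the paper proceeds by a direct four-step argument — surjectivity of $\Spec(\pi^*)$ via $\pi_*\langle\pi^*(\FQ)\rangle\subseteq\FQ$ and Balmer's Zorn lemma; injectivity via an inductive tower of triangles built from the orbit stratification (restricting to $G$-invariant opens on which $G/H$ acts freely and peeling off $\pi^*\pi^{G/H}_*$); closedness by computing the image of $\supp(a)$; and the sheaf isomorphism by showing $\pi^*_V$ is bijective on endomorphisms of the unit using the adjunction $\pi^*\dashv\pi^G_*$ and flat base change — you instead construct a support datum $\supph$ on $\CD^G(X)$ valued in $|X/G|$ and show directly that it is classifying, by combining Thomason's classification on the smooth variety $X$ with the separable-descent observation that $\mathbf 1$ is an equivariant retract of $\mathrm{Ind}(\CO_X)\cong\CO_X\otimes_k k[G]$ (Maschke). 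The heart of the matter — the implication $\supph M\subseteq\supph N\Rightarrow M\in\langle N\rangle_\otimes$ via the auxiliary ideal $\{Q:\mathrm{Ind}(Q)\in\langle N\rangle_\otimes\}$ of $\CD^{per}(X)$ — is a clean conceptual replacement for the paper's orbit-type induction; it also treats the non-faithful case uniformly, whereas the paper states its general case under a faithfulness hypothesis and leaves the reduction implicit. The trade-off comes at the sheaf level: the paper deliberately avoids any equivariant localization theorem by comparing $\End$ of the unit across the functor $\pi^*_V$ directly, using only the classical Thomason--Neeman for $\CD^{per}(X/G)$, while your argument identifies $\End_{\CD^G(X)_{\pi^{-1}V}}(\mathbf 1)$ with $\Gamma(\pi^{-1}V,\CO_X)^G$ by appealing to an equivariant Thomason--Neeman theorem (i.e.\ that $\CD^G_{qc}(X)$ is compactly generated with compact objects the equivariant perfect complexes, and that open restriction is a Bousfield localization). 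That result is true for $[X/G]$ under the standing hypotheses, but you invoke it rather than prove it, so your step 4 rests on more machinery than the paper's; if you want the argument self-contained you should either supply that localization theorem or replace step 4 with the paper's adjunction-and-flat-base-change computation, which is compatible with your homeomorphism since both agree on $\Spec(\pi^*)$.
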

The proof involves some computation using results from representation
theory.

Superschemes, studied by Manin and Deligne (see for example
\cite{Manin1}), are also an important object of study in modern
algebraic geometry, specially due to applications in physics. The
following definition of split superscheme is given in [ Pg. 84-85,
Manin\cite{Manin3}].
\begin{definition}
  \begin{enumerate}
  \item A ringed space $( X , \CO_X)$ is called \emph{superspace} if
    the ring $\CO_X(U)$ associated to any open subset $U$ is
    supercommutative and each stalk is local ring. A \emph{superspace}
    is called \emph{superscheme} if in addition the ringed space $(X ,
    \CO_{X,0})$ is a scheme and $\CO_{X,1}$ is a coherent sheaf over
    $\CO_{X,0}$ (where the subscript 0 denotes the even part and the
    subscript 1 denotes the odd part). We shall denote by $J_X$ the
    ideal sheaf generated by $\CO_{X,1}$ inside $\CO_X$.
  \item A superscheme $(X , \CO_X)$ is called \emph{split} if the
    graded sheaf $Gr \CO_X$ with mod 2 grading is isomorphic as a
    locally superringed sheaf to $\CO_X$. Here the graded sheaf
    \[
    Gr \CO_X := \oplus_{i \geq 0} J_X^i/J_X^{i + 1} \mbox{ where }
    J_X^0 := \CO_X .
    \]
  \end{enumerate}
\end{definition}
Manin has also given example of superschemes which are not split
superschemes.  An important example of a split superscheme is super
projective space $\mathbb{P}^{n|m}$.  We consider the triangulated
category $\CD^{per}(X)$ of `` perfect complexes '' (the definition
being modified appropriately in the super setting) on this
superscheme.

\begin{theorem}
  Let $X$ be a split superscheme. Let $X_0 = (X, \CO_{X, 0})$ be the
  0-th part of this superscheme. Here $X_0$ is by definition a
  scheme. Then we have an isomorphism of locally ringed spaces
  \[
  f : X_0 \rightarrow \Spec(\CD^{per}(X)) \text{.}
  \]
\end{theorem}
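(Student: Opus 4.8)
The plan is to reduce the computation of $\Spec(\CD^{per}(X))$ for the split superscheme $X$ to the known computation for the underlying scheme $X_0$, using Balmer's reconstruction theorem (cited as \cite{PB2}) which identifies $\Spec(\CD^{per}(Y)) \cong Y$ for a reasonable scheme $Y$. So the heart of the matter is to produce a tensor-triangulated comparison between $\CD^{per}(X)$ and $\CD^{per}(X_0)$ that induces a homeomorphism on spectra and an isomorphism of structure sheaves. First I would make precise the definition of $\CD^{per}(X)$ in the super setting: since $X$ is split, $\CO_X \cong \CO_{X_0} \oplus J_X/J_X^2 \oplus \cdots$ as a sheaf of supercommutative $\CO_{X_0}$-algebras, so a perfect complex of $\CO_X$-modules is, after forgetting along the closed immersion $i\colon X_0 \hookrightarrow X$ (which is a nilpotent thickening in the odd directions), controlled by the $\CO_{X_0}$-structure. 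The key point is that the odd part $J_X$ is nilpotent as an ideal (it squares into the even nilpotents in a way that terminates when the odd rank is finite), so $i$ is a "topological isomorphism": $X$ and $X_0$ have the same underlying topological space and $i^*$, $i_*$ are mutually inverse on the level of supports.

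The key steps, in order: (1) Establish that restriction $i^*\colon \CD^{per}(X) \to \CD^{per}(X_0)$ and extension along $i$ are well-defined exact tensor functors, using that $\CO_X$ is finite and locally free (in the super sense) over $\CO_{X_0}$ because $X$ is split with coherent odd part. (2) Show $i^*$ is essentially surjective and detects the tensor-triangulated ideals: concretely, a perfect complex on $X$ is acyclic iff its restriction to $X_0$ is, because acyclicity can be checked on the associated graded for the $J_X$-adic filtration, which is a complex of $\CO_{X_0}$-modules — this is the technical core. (3) Deduce from (1)–(2), via Balmer's comparison lemmas for functors inducing bijections on radical thick tensor ideals, that $\Spec(i^*)\colon \Spec(\CD^{per}(X_0)) \to \Spec(\CD^{per}(X))$ is a homeomorphism. (4) Compose with Balmer's reconstruction isomorphism $X_0 \xrightarrow{\sim} \Spec(\CD^{per}(X_0))$ to get the homeomorphism $f$ on underlying spaces. (5) Check that $f$ is an isomorphism of \emph{locally ringed} spaces by comparing structure sheaves: Balmer's structure sheaf on $\Spec(\CD^{per}(X))$ is built from endomorphism rings of the unit, and $\End_{\CD^{per}(X)}(\CO_X) = \CO_X(X)$ in degree zero localizes correctly; since $J_X$ consists of nilpotents it disappears upon taking the reduced/local structure sheaf, leaving $\CO_{X_0}$.

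I expect step (2) — proving that acyclicity (equivalently, membership in thick tensor ideals) for perfect complexes over $\CO_X$ is detected after restriction to $X_0$ — to be the main obstacle, since one must handle the interplay between the triangulated structure and the $J_X$-adic filtration carefully; one wants a statement like "$E \in \CD^{per}(X)$ generates the unit ideal iff $i^*E$ does," and the filtration argument needs the splitness hypothesis in an essential way (for a non-split superscheme the graded pieces need not assemble back correctly). A secondary subtlety is step (5): one must verify that Balmer's sheaf $\CO_{\Spec}$ genuinely does not see the odd nilpotent directions, i.e. that the spectrum functor is insensitive to the nilpotent thickening $X_0 \hookrightarrow X$ at the level of \emph{local} rings, which again follows because sections of $\CO_{\Spec}$ are obtained from graded endomorphism rings of $\CO_X$ and nilpotents die in the stalks that matter — but this should be spelled out rather than asserted.
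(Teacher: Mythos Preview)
Your plan conflates two different schemes attached to the split superscheme $X$: the \emph{even} part $X_0 = (X,\CO_{X,0})$ appearing in the statement, and the \emph{reduced} part $X_{rd} = (X,\CO_X/J_X)$. These differ as soon as the odd rank exceeds $1$: in the split case $\CO_{X,0} \cong \bigoplus_i \Lambda^{2i}\CV$, which has nilpotents, while $\CO_{X_{rd}}$ is the underlying ordinary scheme. The ``closed immersion $i\colon X_0\hookrightarrow X$ which is a nilpotent thickening'' you describe is really $X_{rd}\hookrightarrow X$; for the even part the ring map goes the other way ($\CO_{X,0}\hookrightarrow\CO_X$), giving a finite flat cover, not a closed immersion. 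So your functor $i^*$ is ambiguous: if it means $(-)\otimes_{\CO_X}\CO_{X_{rd}}$ it is a unital tensor functor but targets the wrong scheme; if it means restriction of scalars along $\CO_{X,0}\hookrightarrow\CO_X$ it lands in $\CD^{per}(X_0)$ but is \emph{not} a tensor functor.

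This confusion propagates to step (5), which contains a genuine error. Balmer's structure sheaf does \emph{not} kill nilpotents: for an ordinary scheme $Y$ one recovers $\CO_Y$, not $\CO_{Y_{\mathrm{red}}}$. Here $\End_{\CD^{per}(X)}(\CO_X)=\Gamma(X,\CO_{X,0})$ (morphisms are parity-preserving), and the target sheaf is exactly $\CO_{X,0}$, nilpotents and all --- so your assertion that ``$J_X$ consists of nilpotents [so] it disappears'' is both wrong and unnecessary. More importantly, you never explain how to compute $\End(1_U)$ in the Verdier quotient $\CD^{per}(X)/\CD^{per}_Z(X)$: this is the actual content of the structure sheaf comparison, and it does not follow formally from the global calculation.

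The paper's route is different on both fronts. For the homeomorphism it uses the tensor functor in the \emph{opposite} direction, namely the dominant tensor functor $i_{rd}\colon \CD^{per}(X_{rd})\to\CD^{per}(X)$, together with the Postnikov filtration by powers of $J_X$, to transport Thomason's classification of thick tensor ideals from $X_{rd}$ to $X$; this establishes $(X,\supph)$ as a classifying support datum and invokes Balmer's Theorem~5.2 directly. For the structure sheaf the paper proves a genuine localization theorem $\widetilde{\CD^{per}(X)/\CD^{per}_Z(X)}\simeq\CD^{per}(U)$ by first establishing $\CD_{qc}(X)/\CD_{qc,Z}(X)\simeq\CD_{qc}(U)$ (via the forgetful functor to pairs of $\CO_{X_{rd}}$-complexes and the known scheme case) and then applying Neeman's compact-generation machinery to pass to compact objects. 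This localization theorem is precisely what lets one identify $\End(1_U)$ with $\Gamma(U,\CO_{U,0})$, and it is the substantial step your outline omits.
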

The proof of homeomorphism adapts the classification of thick tensor
ideals due to Thomason\cite{Thomason2} as demonstrated by
Balmer\cite{PB2}. Again, following Balmer\cite{PB2} we use the
generalized localization theorem of [Theorem 2.1,
Neeman\cite{Neeman1}] to finish the proof.

\noindent \textbf{Acknowledgement : } We would like to take this
opportunity to thank Prof. Kapil Paranjape and Prof. V. Srinivas for
their encouragement and insightful comments.

\section{Preliminaries} \label{basics} In this section we shall recall
various basic definitions and facts which are used explicitly or
implicitly later.
\subsection{Categorical definitions}
As we are borrowing many definitions and results from Balmer's
papers\cite{PB1}\cite{PB2} so we shall work only with an essentially
small categories i.e. categories equivalent to a small category. We
recall first some basic definitions,
\begin{definition}[Semi simple abelian category]
  An abelian category is called \emph{semisimple} if every short exact
  sequence splits.
\end{definition}
\begin{definition}[Triangulated category]
  An additive category $\CD$ with a functorial isomorphism $T$,
  (called \emph{translation} or \emph{shift},) and a collection of
  sextuple $(a, b, c, f, g, h )$ with objects $a$, $b$, $c$ and
  morphisms $f$, $g$, $h$, called \emph{distinguished triangles},
  satisfying certain axioms, (cf.  \cite{Verdier}\cite{Hartshorne2},)
  is called \emph{triangulated category}. Traditionally the image of
  any object, say $a$, via functor $T^i$ is denoted as $a[i]$ and a
  distinguished triangle is denoted in a similar way as short exact
  sequences: $a \to b \to c \to a[1]$.

  An additive functor $F : \CD_1 \to \CD_2$ between two triangulated
  categories $\CD_1$ and $\CD_2$ is called a \emph{triangulated
    functor} if it commutes with the translation functor i.e. $ F
  \circ T = T \circ F$ and takes distinguished triangles to
  distinguished triangles, i.e.  $F(a) \to F(b) \to F(c) \to F(a)[1]$
  is distinguished for every distinguished triangles $a \to b \to c
  \to a[1]$.
\end{definition}
\begin{example}
  Let $\CA$ be an abelian category and $K^*(\CA)$ (resp.
  $\CD^*(\CA)$), for ($* = -, + \text{ or } b$), be the homotopy
  (resp. derived) category of an abelian category $\CA$. Then both
  additive categories are triangulated categories, see \cite{Manin2}
  for proof. In particular we are interested in the cases when $\CA =
  \coh^G(X)$ for some variety $X$ with an action of some finite group
  $G$; see subsection \ref{G-shv} for more details. When group $G$ is
  trivial then $\CA$ is an abelian category of coherent sheaves on
  variety $X$.  Another class of examples which we shall consider
  later comes from an abelian categories $ \CA = \coh(\CO_X)$ for some
  split superscheme $X$.
\end{example}
\begin{example}
  The category $\CD^{per}$ of perfect complexes on a scheme is a
  triangulated category. See \cite{Thomason1} for definitions.
\end{example}

\subsection{Triangular spectrum} \label{shv}

In this section we shall recall some definitions and results from
Balmer's papers \cite{PB1} and \cite{PB2}. Suppose $\CD$ is an essentially
small triangulated category
\begin{definition}
  A \emph{tensor triangulated category} is a triple $(\CD, \otimes,
  1)$ consisting of a triangulated category with symmetric monoidal
  bifunctor which is exact in each variable. The unit is denoted by
  $1$ (or $\Id$).
\end{definition}
\begin{definition}
  A \emph{thick tensor ideal} $\CA$ of $\CD$ is a full sub category
  containing $0$ and satisfying the following conditions:
  \begin{description}
  \item[(a)] $\CA$ is \emph{triangulated}: if any two terms of a
    distinguished triangle are in $\CA$ then third term is also in
    $\CA$. In particular direct sum of any two objects of $\CA$ is
    again in $\CA$ and this we refer as an {\bf additivity}.
  \item[(b)] $\CA$ is \emph{thick}: If $a \oplus b \in \CA \mbox{ then
    } a \in \CA$.
  \item[(c)] $\CA$ is \emph{tensor ideal}: if $a \text{ or } b \in
    \CA$ then $a \otimes b \in \CA$.
  \end{description}
\end{definition}
If $\mathcal{E} $ is any collection of $\CD$ then we shall denote by
$\langle \mathcal{E} \rangle$ the smallest thick tensor ideal
generated by this subset in $\CD$.

Now we shall give an explicit description of a thick tensor ideal generated by
some collection $\CE $ in a tensor triangulated category. We first use some
definitions from Bondal\cite{Bondal1} here. Recall $add(\CE)$ was defined as an
additive category generated by $\CE$ and closed under taking shifts inside
$\CD$. Similarly define $ideal(\CE)$ as a full sub category generated by objects
of the form $a \otimes x$ for each $ a \in \CD$ and $x \in \CE$. Therefore
$ideal(\CE)$ is closed under taking direct sum, shifts and tensoring with any
object of $\CD$. Recall the operation defined on sub categories i.e. $\CA \star
\CB$ is the full sub category generated by objects $x$ which fits in a
distinguished triangle of the form
        \[ 
           a \to x \to b \to a[1] \mbox{ with } a \in \CA \mbox{ and } b \in
\CB.
        \]
This was observed by Bondal\cite{Bondal1} et. al. that if $\CA$ and $\CB$ are
closed under shifts and direct sums then  $\CA \star \CB$ is also closed under
shifts and direct sums. Similarly we can see that if $\CA$ and $\CB$ are tensor
ideal then $\CA \star \CB$ is also tensor ideal. Take $smd(\CA)$ to be the full
subcatgory generated by all direct summands of objects of $\CA$. Now combining
these two operations we can define a new operation on collections of
subcategories as follows,
        \[ 
          \CA \diamond \CB := smd(\CA \star \CB).
        \]
Using this operation we can define the full subcategories $ \lan \CE \ran^n$ for
each non-negative integer as
        \[
           \lan \CE \ran^n := \lan \CE \ran^{n-1} \diamond \lan \CE \ran^0
\mbox{ where } \lan \CE \ran^0 := smd( ideal(\CE) ).
        \]
Now we can see following description of ideal generated by a collection $\CE$, 
\begin{lemma}
\label{ideal}
 $\lan \CE \ran = \cup_{n \geq 0} \lan \CE \ran^n$.
\end{lemma}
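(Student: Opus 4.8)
The plan is to prove the two inclusions $\lan \CE \ran \supseteq \cup_{n\geq 0} \lan \CE \ran^n$ and $\lan \CE \ran \subseteq \cup_{n \geq 0} \lan \CE \ran^n$ separately. For the first inclusion, I would argue that $\lan \CE \ran$, being a thick tensor ideal containing $\CE$, must contain $\lan \CE \ran^0 = smd(ideal(\CE))$: indeed $ideal(\CE)\subseteq\lan\CE\ran$ since a thick tensor ideal is closed under tensoring with arbitrary objects, under shifts, and under finite direct sums, and then $smd$ of it stays inside by thickness. Proceeding by induction on $n$, if $\lan\CE\ran^{n-1}\subseteq\lan\CE\ran$, then since $\lan\CE\ran$ is triangulated it is closed under the operation $\star$ (the third vertex of a triangle with two vertices in $\lan\CE\ran$ lies in $\lan\CE\ran$), and closed under $smd$ by thickness, hence closed under $\diamond$; therefore $\lan\CE\ran^n = \lan\CE\ran^{n-1}\diamond\lan\CE\ran^0 \subseteq \lan\CE\ran$. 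Taking the union over $n$ gives $\cup_{n\geq 0}\lan\CE\ran^n\subseteq\lan\CE\ran$.

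For the reverse inclusion, the strategy is to show that $\CI := \cup_{n\geq 0}\lan\CE\ran^n$ is itself a thick tensor ideal containing $\CE$; minimality of $\lan\CE\ran$ then forces $\lan\CE\ran\subseteq\CI$. Containment of $\CE$ is clear since $\CE\subseteq ideal(\CE)\subseteq\lan\CE\ran^0\subseteq\CI$ (and $0\in\CI$). The key point is that the sequence $\lan\CE\ran^0\subseteq\lan\CE\ran^1\subseteq\cdots$ is increasing — which follows because $0$ lies in every $\lan\CE\ran^m$, so $\CA\subseteq\CA\star\{0\}\subseteq\CA\diamond\lan\CE\ran^0$ — and that each $\lan\CE\ran^n$ is closed under shifts, direct sums, and tensoring by arbitrary objects of $\CD$ (this propagates from $\lan\CE\ran^0$ through $\star$ and $smd$ using the remarks quoted from Bondal before the lemma). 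Thickness of $\CI$: a direct summand of an object of $\lan\CE\ran^n$ lies in $smd(\lan\CE\ran^n)\subseteq\lan\CE\ran^n\diamond\lan\CE\ran^0=\lan\CE\ran^{n+1}\subseteq\CI$. Tensor-ideal property of $\CI$: follows from the tensor-ideal property of each $\lan\CE\ran^n$. The one genuinely substantive verification is that $\CI$ is triangulated, i.e. closed under $\star$ in the following sense: given a distinguished triangle $a\to x\to b\to a[1]$ with $a,b\in\CI$, we must show $x\in\CI$.

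To handle the triangulated condition, pick $m$ with $a\in\lan\CE\ran^m$ and $n$ with $b\in\lan\CE\ran^n$; using the increasing property we may take $m=n$. Then by definition $x\in\lan\CE\ran^n\star\lan\CE\ran^n$, so it suffices to see $\lan\CE\ran^n\star\lan\CE\ran^n\subseteq\lan\CE\ran^{2n}$ (or some bounded power), which one proves by an induction unwinding $\lan\CE\ran^n=\lan\CE\ran^{n-1}\diamond\lan\CE\ran^0$ together with the octahedral-axiom-based associativity of $\star$ observed by Bondal: $(\CA\star\CB)\star\CC=\CA\star(\CB\star\CC)$ up to the relevant closure operations, and compatibility of $smd$ with $\star$. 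Concretely, $\lan\CE\ran^n\star\lan\CE\ran^n$ expands to an iterated $\star$-product of $2n$ copies of (summands of) $ideal(\CE)$, which after reassociating and applying $smd$ is contained in $\lan\CE\ran^{2n-1}$ or $\lan\CE\ran^{2n}$; either way it sits inside $\CI$. The main obstacle I anticipate is precisely this bookkeeping — making the associativity of $\diamond$ and the interaction $smd(\CA)\star smd(\CB)\subseteq smd(\CA\star\CB)$ precise enough that the iterated expansion of $\lan\CE\ran^n\star\lan\CE\ran^n$ provably lands in a finite power $\lan\CE\ran^N$ — but this is exactly the type of manipulation carried out in Bondal's work, which we are entitled to cite, so it should be a matter of careful indexing rather than a new idea.
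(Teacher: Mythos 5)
Your proof is correct and follows exactly the strategy the paper sketches in a single sentence (the right-hand side is a thick tensor ideal and is contained in every thick tensor ideal containing $\CE$, hence equals $\lan\CE\ran$); you simply fill in the details the paper leaves as an exercise, including the only nontrivial point, namely closure of the union under cones, which you correctly reduce to the associativity of $\diamond$ and the compatibility $smd(\CA)\star smd(\CB)\subseteq smd(\CA\star\CB)$ from Bondal--van den Bergh.
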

 Proof of the above lemma follows from the fact that right hand side subcategory
is a thick tensor ideal and contains every thick tensor ideal containing the
collection $\CE$.

\begin{definition}
  \begin{description}
  \item[(a)] An additive functor, $F : \CD_1 \to \CD_2$, is called an
    \emph{exact (or triangulated) } if it commutes with translation
    functor and takes distinguished triangle to a distingushed
    triangle.
  \item[(b)] \label{def:tensfunc} An exact functor, $F : \CD_1 \to
    \CD_2$, is called a \emph{tensor functor} if there exists a
    natural isomorphism $\eta(a, b) : F(a) \otimes F(b) \rightarrow
    F(a \otimes b)$ for objects $a$ and $b$ of $\CD_1$.
  \item[(c)] A tensor functor, $F : \CD_1 \to \CD_2$, is called
    \emph{dominant} if $\langle F(\CD_1) \rangle = \CD_2$.
  \end{description}
\end{definition}
Note that every unital tensor functors is a dominant tensor
functor.

\begin{definition}
  A \emph{prime ideal} of $\CD$ is a proper thick tensor ideal $\CP
  \subsetneq \CD$ such that $a \otimes b \in \CP \implies a \in \CP
  \text{ or } b \in \CP$. And \emph{triangular spectrum} of $\CD$ is
  defined as set of all prime ideals, i.e.
  \[
  \Spc(\CD) = \{ \CP\ |\ \CP \text{ is a prime ideal of } \CD
  \}\text{.}
  \]
\end{definition}
The Zariski topology on this set is defined as follows: closed sets
are of the form
\[
Z(\CS) := \{\CP \in \Spc(\CD)\ |\ \CS \cap \CP = \varnothing\}
\text{,}
\]
where $\CS$ is a family of objects of $\CD$; or equivalently we can
define the open subsets to be of the form
\[
U(\CS) := \Spc(\CD) \backslash Z(\mathcal{S}) \text{.}
\]
In particular, we shall denote by
\[
\supp(a) := Z(\{a\}) = \{ \CP \in \Spc(\CD)\ |\ a \notin \CP \}
\text{,}
\]
the basic closed sets and similarly $U(\{a\})$ denotes the basic open
sets.

A collection of objects $\CS \subset \CD$ is called a \emph{tensor
  multiplicative family} of objects if $1 \in \CS$ and if $a, b \in
\CS \implies a \otimes b \in \CS$.

We shall recall here the following lemma(Lemma 2.2 in Balmer's
paper\cite{PB2}) which we shall need later,
\begin{lemma} \label{zorn} Let $\CD$ be a nontrivial tensor
  triangulated category and $\CI \subset \CD$ be a thick tensor
  ideal. Suppose $\CS \subset \CD$ is a tensor multiplicative family
  of objects s.t. $\CS \cap \CI = \varnothing$ Then there exists a
  prime ideal $\CP \in \Spc(\CD)$ such that $\CI \subset \CP \text{
    and } \CP \cap \CS = \varnothing$.
\end{lemma}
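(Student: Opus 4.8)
The plan is to run the classical maximality argument for producing prime ideals: use Zorn's lemma to obtain a thick tensor ideal maximal among those containing $\CI$ and disjoint from $\CS$, and then show that this maximality forces primality. The explicit description of generated ideals from Lemma~\ref{ideal} is precisely what allows the commutative-algebra proof to be imported into the triangulated setting.

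First I would let $\Sigma$ be the collection of all thick tensor ideals $\CJ$ with $\CI \subseteq \CJ$ and $\CJ \cap \CS = \varnothing$, partially ordered by inclusion. It is nonempty since $\CI \in \Sigma$. For a chain $\{\CJ_\lambda\}_\lambda$ in $\Sigma$, the union $\bigcup_\lambda \CJ_\lambda$ is again a thick tensor ideal: it contains $0$; closure under direct summands and under $\otimes$ with an arbitrary object is immediate because the chain is directed; and for the triangulated axiom, any two vertices of a distinguished triangle lying in the union already lie in a common $\CJ_\lambda$, hence so does the third. It still avoids $\CS$, so it is an upper bound in $\Sigma$. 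Zorn's lemma then gives a maximal element $\CP \in \Sigma$. Since $1 \in \CS$ by the definition of a tensor multiplicative family and $\CP \cap \CS = \varnothing$, we have $1 \notin \CP$, so $\CP \subsetneq \CD$ is proper.

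It remains to show $\CP$ is prime. Suppose not: there are $a, b$ with $a \otimes b \in \CP$ but $a \notin \CP$ and $b \notin \CP$. Then $\langle \CP \cup \{a\} \rangle$ and $\langle \CP \cup \{b\} \rangle$ are thick tensor ideals properly containing $\CP$ and still containing $\CI$, so by maximality of $\CP$ neither can avoid $\CS$; choose $s \in \CS \cap \langle \CP \cup \{a\} \rangle$ and $t \in \CS \cap \langle \CP \cup \{b\} \rangle$. Since $\CS$ is multiplicative, $s \otimes t \in \CS$, and we reach the desired contradiction — establishing primality and thus the lemma — as soon as we show $s \otimes t \in \CP$.

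For that last point I would prove the general containment $\langle \CE_1 \rangle \otimes \langle \CE_2 \rangle \subseteq \langle\, \CE_1 \otimes \CE_2 \,\rangle$ for families $\CE_1, \CE_2$ of objects, where $\CE_1 \otimes \CE_2 := \{\, e_1 \otimes e_2 \mid e_i \in \CE_i \,\}$, and apply it with $\CE_1 = \CP \cup \{a\}$ and $\CE_2 = \CP \cup \{b\}$: each generator $e_1 \otimes e_2$ then lies in $\CP$ — because $\CP$ is an ideal if one of $e_1, e_2$ lies in $\CP$, and it equals $a \otimes b \in \CP$ otherwise — so $\langle \CE_1 \otimes \CE_2 \rangle = \CP$, giving $s \otimes t \in \langle \CE_1 \rangle \otimes \langle \CE_2 \rangle \subseteq \CP$. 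The containment itself I would obtain by a double induction on the filtration levels of Lemma~\ref{ideal}. The base case, $smd(ideal(\CE_1)) \otimes smd(ideal(\CE_2)) \subseteq smd(ideal(\CE_1 \otimes \CE_2))$, holds since $ideal(-)$ absorbs tensoring with arbitrary objects, $\otimes$ commutes with finite direct sums and shifts, and $smd$ is compatible with $\otimes$. The inductive step reduces $\langle \CE_1 \rangle^{m} \otimes \langle \CE_2 \rangle^{n}$ to lower levels: tensoring the distinguished triangle witnessing membership in the relevant $\CA \star \CB$ with a fixed object again yields a distinguished triangle because $\otimes$ is exact in each variable, and one then peels off $smd$ using its compatibility with $\otimes$. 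The main obstacle is exactly this inductive bookkeeping over the three operations $ideal$, $\star$ (equivalently $\diamond$) and $smd$; it requires no new idea beyond exactness of $\otimes$, but it is the one step where care is genuinely needed.
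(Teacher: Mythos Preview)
The paper does not give its own proof of this lemma: it is stated only as a recollection of Lemma~2.2 from Balmer's paper \cite{PB2}, with no argument supplied. So there is nothing in the present paper to compare your proof against.

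That said, your argument is correct and is exactly the standard one (the one Balmer himself gives): Zorn's lemma on the poset of thick tensor ideals containing $\CI$ and disjoint from $\CS$, properness of the maximal element from $1\in\CS$, and primality by the usual ``$\langle\CP\cup\{a\}\rangle$ and $\langle\CP\cup\{b\}\rangle$ both meet $\CS$'' contradiction. Your final step, the containment $\langle\CE_1\rangle\otimes\langle\CE_2\rangle\subseteq\langle\CE_1\otimes\CE_2\rangle$, is also right; the double induction over the filtration of Lemma~\ref{ideal} works, though you could shorten it by noting that for fixed $y$ the set $\{x:x\otimes y\in\langle\CE_1\otimes\CE_2\rangle\}$ is itself a thick tensor ideal (by exactness of $\otimes$ and the fact that $\langle\CE_1\otimes\CE_2\rangle$ is one), so it suffices to check it contains $\CE_1$, and then symmetrically reduce to the generators on the other side. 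One tiny wording point: you write $\langle\CE_1\otimes\CE_2\rangle=\CP$, but what you actually need and use is only the inclusion $\langle\CE_1\otimes\CE_2\rangle\subseteq\CP$, which is immediate since all the generators lie in $\CP$.
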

Balmer \cite{PB2} had also proved the functoriality of $\Spc$ on all
essentially small tensor triangulated category with a morphism given
by an unital tensor functors but it is not difficult to see that it is
also true for an essentially small tensor triangulated categories with
morphism given by a dominant tensor functor i.e. we have following
result,
\begin{proposition}
  Given $F : \CD_1 \to \CD_2 $ a dominant tensor functor, the map
  $\Spc(F) : \Spc(\CD_2) \to \Spc(\CD_1)$ defined as $\CP \mapsto
  F^{-1}(\CP)$ is well defined, continuous and for all objects $a \in
  \CD_1$, we have $\Spc(F)^{-1}(\supp(a)) = \supp(F(a))$ in
  $\Spc(\CD_2)$.

  This defines a contravariant functor $\Spc(-)$ from the category of
  essentially small tensor triangulated categories with dominant
  tensor functors as morphisms to the category of topological
  spaces. So if $F$, $G$ are two dominant tensor functors then $\Spc(G
  \circ F) = \Spc(F) \circ \Spc(G)$.
\end{proposition}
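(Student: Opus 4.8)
The plan is to verify the clauses in turn, pinpointing the single place where \emph{dominant} does the work that \emph{unital} does in Balmer's argument. First I would check that $\Spc(F)$ is well defined. Fix a dominant tensor functor $F:\CD_1\to\CD_2$ and a prime $\CP\in\Spc(\CD_2)$, and let $F^{-1}(\CP)$ be the full subcategory of $\CD_1$ on the objects $a$ with $F(a)\in\CP$. That $F^{-1}(\CP)$ is a thick tensor ideal is formal: exactness of $F$ (together with $F\circ T=T\circ F$) makes it closed under the ``third object'' condition, hence triangulated; $F(a\oplus b)\cong F(a)\oplus F(b)$ together with thickness of $\CP$ gives thickness; and the natural isomorphism $F(a\otimes x)\cong F(a)\otimes F(x)$ together with the tensor-ideal property of $\CP$ gives that it is a tensor ideal. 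Primeness transports across the same isomorphism: $a\otimes b\in F^{-1}(\CP)$ means $F(a)\otimes F(b)\in\CP$, hence $F(a)\in\CP$ or $F(b)\in\CP$. The one clause that genuinely needs dominance is properness: if $F^{-1}(\CP)=\CD_1$ then $F(\CD_1)\subseteq\CP$, and since $\CP$ is a thick tensor ideal this forces $\lan F(\CD_1)\ran\subseteq\CP$; but $\lan F(\CD_1)\ran=\CD_2$ by dominance, so $\CP=\CD_2$, contradicting properness of $\CP$. Hence $F^{-1}(\CP)\subsetneq\CD_1$ is a prime ideal.

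For the support formula I would simply compute, for $a\in\CD_1$,
\[
\Spc(F)^{-1}(\supp(a))=\{\CP\in\Spc(\CD_2):a\notin F^{-1}(\CP)\}=\{\CP\in\Spc(\CD_2):F(a)\notin\CP\}=\supp(F(a)),
\]
and since an arbitrary closed subset of $\Spc(\CD_1)$ has the form $Z(\CS)=\cap_{a\in\CS}\supp(a)$ and $\Spc(F)^{-1}$ commutes with intersections, this identity already proves that $\Spc(F)$ is continuous.

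Finally, functoriality. The equality $\Spc(\Id_{\CD})=\Id$ is immediate from $\Id^{-1}(\CP)=\CP$. Given dominant tensor functors $F:\CD_1\to\CD_2$ and $G:\CD_2\to\CD_3$, the composite $G\circ F$ is exact and is a tensor functor via the composite of the two structure isomorphisms, so the only thing to check is that $G\circ F$ is dominant, and this is the step I expect to require actual work. I would reduce it to the lemma that \emph{every} exact tensor functor $G$ satisfies $G(\lan\CE\ran)\subseteq\lan G(\CE)\ran$ for any collection $\CE$. This follows by induction using the description $\lan\CE\ran=\cup_{n\geq 0}\lan\CE\ran^n$ of Lemma \ref{ideal}: additivity of $G$ and $G(a\otimes x)\cong G(a)\otimes G(x)$ give $G(ideal(\CE))\subseteq ideal(G(\CE))$ and hence, taking direct summands, $G(\lan\CE\ran^0)\subseteq\lan G(\CE)\ran^0$; since $G$ preserves distinguished triangles and direct summands, $G(\CA)\subseteq\CA'$ and $G(\CB)\subseteq\CB'$ imply $G(\CA\star\CB)\subseteq\CA'\star\CB'$ and therefore $G(\CA\diamond\CB)\subseteq\CA'\diamond\CB'$; these two facts together propagate the inclusion from $\lan\CE\ran^{n-1}$ to $\lan\CE\ran^n$. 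Applying this with $\CE=F(\CD_1)$ and using $\lan F(\CD_1)\ran=\CD_2$ gives $G(\CD_2)\subseteq\lan G(F(\CD_1))\ran$, so $\CD_3=\lan G(\CD_2)\ran\subseteq\lan (G\circ F)(\CD_1)\ran$, i.e. $G\circ F$ is dominant. The contravariant identity $\Spc(G\circ F)=\Spc(F)\circ\Spc(G)$ is then nothing but the set-theoretic equality $(G\circ F)^{-1}(\CP)=F^{-1}(G^{-1}(\CP))$ for $\CP\in\Spc(\CD_3)$.
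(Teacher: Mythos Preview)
Your argument is correct and is precisely the verification the paper gestures at: the paper's own proof is the single line ``(Similar to Balmer\cite{PB1})'', so you have supplied the details rather than taken a different route. In particular you have correctly isolated properness of $F^{-1}(\CP)$ as the only place where dominance replaces Balmer's unitality hypothesis, and your use of Lemma~\ref{ideal} to show that dominant tensor functors compose is exactly the ingredient the paper's framework provides for this purpose.
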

\begin{proof}
  (Similar to Balmer\cite{PB1})
\end{proof}
\begin{corollary}
  If a tensor functor $F : \CD_1 \to \CD_2$ is an equivalence then
  every quasi-inverse functor of $F$ is a dominant tensor functor. And
  also $\Spc(F)$ is a homeomorphism.
\end{corollary}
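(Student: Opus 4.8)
The plan is to exhibit a quasi-inverse of $F$ carrying a compatible tensor structure, and then to deduce the homeomorphism purely formally from the functoriality of $\Spc(-)$ established in the preceding proposition.

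First I would fix a quasi-inverse $G : \CD_2 \to \CD_1$ of $F$, together with natural isomorphisms $\varepsilon : F\circ G \Rightarrow \Id_{\CD_2}$ and $\eta : \Id_{\CD_1}\Rightarrow G\circ F$. It is standard that a quasi-inverse of a triangulated equivalence is again additive and triangulated, so $G$ is an exact functor. To equip $G$ with a tensor structure, recall that $F$ carries a natural isomorphism $\eta_F(a,b) : F(a)\otimes F(b)\to F(a\otimes b)$, and set
\[
\zeta(a,b)\colon G(a)\otimes G(b) \xrightarrow{\ \eta\ } GF\bigl(G(a)\otimes G(b)\bigr) \xrightarrow{\ G(\eta_F)^{-1}\ } G\bigl(FG(a)\otimes FG(b)\bigr) \xrightarrow{\ G(\varepsilon\otimes\varepsilon)\ } G(a\otimes b).
\]
Each arrow is a natural isomorphism, so $\zeta$ is one; its naturality in the pair $(a,b)$ follows by a diagram chase from the naturality of $\eta$, $\varepsilon$ and of the structure isomorphism $\eta_F$ of $F$. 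This makes $G$ a tensor functor in the sense of Definition \ref{def:tensfunc}.

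Next I would check dominance. Since $F$ is an equivalence it is essentially surjective, so every object of $\CD_2$ is isomorphic to some object in $F(\CD_1)$; as a thick tensor ideal is closed under direct summands and an isomorphic copy of an object is a direct summand of it, every thick tensor ideal is closed under isomorphism, whence $\lan F(\CD_1)\ran = \CD_2$, i.e. $F$ is dominant. The identical argument applies to $G$. Therefore the preceding proposition applies to both $F$ and $G$, giving continuous maps $\Spc(F)$ and $\Spc(G)$ with $\Spc(G\circ F) = \Spc(F)\circ\Spc(G)$ and $\Spc(F\circ G) = \Spc(G)\circ\Spc(F)$.

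Finally I would record two elementary facts about $\Spc(-)$: (i) $\Spc(\Id_{\CD}) = \Id_{\Spc(\CD)}$, immediate from $\Id^{-1}(\CP)=\CP$; and (ii) if two dominant tensor functors $\CD\to\CD'$ are naturally isomorphic then they induce the same map on $\Spc$, since for a prime $\CP'$ and an object $a$ one has $F_1(a)\in\CP' \iff F_2(a)\in\CP'$ by closure of $\CP'$ under isomorphism, hence $F_1^{-1}(\CP')=F_2^{-1}(\CP')$. Combining (i) and (ii) with the natural isomorphisms $G\circ F\cong\Id_{\CD_1}$ and $F\circ G\cong\Id_{\CD_2}$ yields $\Spc(F)\circ\Spc(G)=\Id_{\Spc(\CD_1)}$ and $\Spc(G)\circ\Spc(F)=\Id_{\Spc(\CD_2)}$, so $\Spc(F)$ is a homeomorphism with inverse $\Spc(G)$. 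The only genuinely technical point in this whole argument is verifying that $\zeta$ is natural, so that $G$ really is a tensor functor; I expect this "transport of structure along an equivalence" to be the main obstacle, though it is entirely routine.
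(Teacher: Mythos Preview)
Your proof is correct and follows essentially the same route as the paper: the paper also constructs the tensor structure on $G$ by transporting along the equivalence (writing the middle isomorphism as an equality rather than as $G(\eta_F)^{-1}$), observes that $\Spc(F)$ depends only on $F$ as a functor (your fact~(ii)), and then invokes functoriality to obtain the homeomorphism. Your version is somewhat more explicit about the intermediate steps, but the strategy is identical.
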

\begin{proof}
  First observe that the continuous map $\Spc(F)$ given by a dominant
  tensor functor is independent of natural isomorphism defining the
  tensor functor (recall definition \ref{def:tensfunc}). Now using
  functoriality of above proposition we have an homeomorphism whenever
  a quasi-inverse of $F$ is an dominant tensor functor. Suppose $G$ is
  a quasi-inverse of $F$. Since $G \circ F \simeq Id$, the exact
  functor $G$ is dominant. Suppose $\eta : F \circ G \to Id$ and $\mu
  :G \circ F \to Id$ are natural isomorphisms. Now we get a required
  natural isomorphism by composing as follows,
  \[
  G(a) \otimes G(b) \xrightarrow{\mu^{-1}} GF(G(a) \otimes G(b)) =
  G(FG(a) \otimes FG(b)) \xrightarrow{G(\eta_a \otimes \eta_b)} G(a
  \otimes b).
  \]
  Here we used a fact that $ G(\eta_a \otimes \eta_b)$ gives a natural
  transformation.
\end{proof}

Now we shall recall the definition of a structure sheaf defined on
$\Spc(\CD)$ as in Balmer \cite{PB2}.
\begin{definition}
  For any open set $U \subset \Spc(\CD)$, let $Z := \Spc(\CD)
  \setminus U$ be a closed complement and let $\CD_Z$ be the thick
  tensor ideal of $\CD$ supported on $Z$. We denote by $\CO_{\CD}$ the
  sheafification of following presheaf of rings: $U \mapsto
  \End(1_U)$ where $1_U \in \frac{\CD}{\CD_Z}$ is the image of the
  unit $1$ of $\CD$ via the localisation map.  And the restriction
  maps are defined using localisation maps in the obvious way.  The
  sheaf of commutative ring $\CO_{\CD}$ makes the topological space
  $\Spc(\CD)$ a ringed space, which we shall denote by $\Spec(\CD) :=
  (\Spc(\CD), \CO_{\CD})$.
\end{definition}
The following theorem was proved in Balmer\cite{PB2} which computes
the spectrum for certain tensor triangulated categories.
\begin{theorem}[Balmer]
  For $X$ a topologically noetherian scheme,
  \[
  \Spec(\CD^{per}(X)) \simeq X.
  \]
\end{theorem}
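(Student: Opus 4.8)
The plan is to prove the homeomorphism of topological spaces and the isomorphism of structure sheaves separately — this is exactly the blueprint that is adapted later in the paper to the equivariant and super settings — the two technical engines being Thomason's classification of thick tensor ideals \cite{Thomason2} and the Thomason--Neeman localization theorem \cite{Neeman1}.

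First I would set up the comparison map. For $x \in X$ put $\CP(x) := \{\, a \in \CD^{per}(X) : x \notin \supph(a)\,\}$, where $\supph(a) = \bigcup_i \supp H^i(a)$ is the (closed) cohomological support. Using the identity $\supph(a \otimes b) = \supph(a) \cap \supph(b)$ for perfect complexes — which one proves by passing to stalks and applying the derived Nakayama lemma — it is routine that each $\CP(x)$ is a prime ideal, so we get a map $\rho : X \to \Spc(\CD^{per}(X))$, $x \mapsto \CP(x)$. Directly from the definitions, $\rho$ identifies the scheme-theoretic support $\supph(a)$ with the Balmer support $\supp(a) = \{\CP : a \notin \CP\}$; in particular $\rho$ is continuous. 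For injectivity I would use that $X$ is sober together with the existence, for each $x$, of a Koszul-type perfect complex whose support is exactly $\overline{\{x\}}$ (built locally on an affine cover, support being a local notion): if $x \ne y$, say $y \notin \overline{\{x\}}$, then such a complex for $x$ lies in $\CP(y) \setminus \CP(x)$.

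The hard part will be surjectivity of $\rho$, and this is where Thomason's classification and the noetherian hypothesis enter. That classification puts thick tensor ideals of $\CD^{per}(X)$ in inclusion-preserving bijection with the specialization-closed subsets $Y \subseteq X$ (for topologically noetherian $X$, every such $Y$ is a union of closed sets with quasi-compact complement), an ideal $\CJ$ corresponding to $\supph(\CJ)$. Transporting the primality condition across this bijection, a prime ideal corresponds to a specialization-closed $Y$ with the property that $Z_1 \cap Z_2 \subseteq Y$ forces $Z_1 \subseteq Y$ or $Z_2 \subseteq Y$, for all closed $Z_1, Z_2$. The crux is then to show that such a $Y$ is necessarily of the form $\{\, y : x \notin \overline{\{y\}}\,\}$ for a unique point $x$ — the generic point of the generization-closed complement $X \setminus Y$ — and this is precisely where I would invoke topological noetherianity, to obtain minimal points under specialization and to upgrade ``any two points of $X \setminus Y$ have a common generization in $X \setminus Y$'' to the existence of an actual generic point. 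Once this is done, $\CP = \CP(x)$, so $\rho$ is a bijection; and since the sets $\supph(a)$ are exactly the closed subsets of $X$ with quasi-compact complement and these generate the topologies on both sides, the support identification above makes $\rho$ a homeomorphism.

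Finally, for the structure sheaves I would apply the Thomason--Neeman localization theorem: for a quasi-compact open $U \subseteq X$ with closed complement $Z$, the Verdier quotient $\CD^{per}(X)/\CD^{per}_Z(X)$ is equivalent, up to idempotent completion, to $\CD^{per}(U)$, compatibly with restriction to smaller quasi-compact opens. Hence $\End(1_U) \cong \End_{\CD^{per}(U)}(\CO_U) \cong \Gamma(U, \CO_X)$, naturally in $U$, so the sheafification of Balmer's presheaf $U \mapsto \End(1_U)$ is carried to $\CO_X$ by $\rho$; passing to stalks shows the structure sheaves and their local rings match, yielding the isomorphism of locally ringed spaces $\Spec(\CD^{per}(X)) \simeq X$.
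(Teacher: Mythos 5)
Your proposal is correct and faithfully reconstructs Balmer's original argument (comparison map via cohomological support, Thomason's classification of thick tensor ideals to identify primes with generic points of generization-closed complements, and Thomason--Neeman localization for the structure sheaf). Note, however, that the paper states this result without proof, citing Balmer \cite{PB2}; the same two-step blueprint you lay out here is only reproduced in the paper when adapting it to the split-superscheme case in Section~\ref{sec:dercatsupsch}.
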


\subsection{$G$-sheaves} \label{G-shv} Throughout this section, $k$ is
field and $G$ be a finite group whose order is coprime to the
characteristic of $k$. Let $X$ be a smooth quasi-projective variety
over $k$, with an action of a finite group $G$ i.e. there is a group
homomorphism from $G$ to the automorphism group of algebraic variety
$X$. We say $G$ acts freely on $X$ if $gx \neq x$ for any $x \in
X$ and any $g \in G$ with $g \neq e$.  Recall following general result
proved in Mumford's Book \cite{Mumford} for the existence of group
quotient,
\begin{theorem}
  Let $X$ be an algebraic variety and $G$ a finite group of
  automorphisms of $X$.  Suppose that for any $x \in X$, the orbit
  $Gx$ of $x$ is contained in an affine open subset of $X$. Then there
  is a pair $(Y , \pi)$ where $Y$ is a variety and $\pi :X \to Y$ a
  morphism, satisfying:
  \begin{enumerate}
  \item \label{itm:gact} as a topological space, $(Y , \pi)$ is the
    quotient of $X$ for the $G$-action; and
  \item if $\pi_*(\CO_X)^G$ denotes the subsheaf of $G$-invariants of
    $\pi_*(\CO_X)$ for the action of $G$ on $\pi_*(\CO_X)$ deduced
    from \ref{itm:gact}, the natural homomorphism $\CO_Y \to
    \pi_*(\CO_X)^G$ is an isomorphism.
  \end{enumerate}
  The pair $(Y , \pi)$ is determined up to an isomorphism by these
  conditions.  The morphism $\pi$ is finite, surjective and separable.
  $Y$ is affine if $X$ is affine.

  If further $G$ acts freely on $X$, $\pi$ is an \'{e}tale morphism.
\end{theorem}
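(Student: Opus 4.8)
The plan is to construct $(Y,\pi)$ by the classical recipe: do it locally, where it is the spectrum of an invariant ring, and then glue, using that a variety is separated. First suppose $X=\Spec A$ is affine, so $G$ acts on the finitely generated $k$-algebra $A$; set $Y:=\Spec A^{G}$ and let $\pi:X\to Y$ be dual to the inclusion $A^{G}\hookrightarrow A$. The first point to settle is that $Y$ is again a variety, i.e. that $A^{G}$ is a finitely generated $k$-algebra. For this I would run Noether's finiteness argument: each $a\in A$ is a root of the monic polynomial $\prod_{g\in G}(T-g\!\cdot\! a)\in A^{G}[T]$, so $A$ is integral over $A^{G}$; letting $B\subseteq A^{G}$ be the $k$-subalgebra generated by the coefficients of these polynomials for a finite set of algebra generators of $A$, one gets that $A$ is module-finite over the noetherian ring $B$, hence $A^{G}\subseteq A$ is module-finite over $B$ and so a finitely generated $k$-algebra. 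The same polynomials show $A$ is finite over $A^{G}$, so $\pi$ is a finite morphism, and integrality (lying over) makes it surjective.

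Next I would identify the fibres of $\pi$ with the $G$-orbits. Given primes $\FP,\FQ\subseteq A$ with $\FP\cap A^{G}=\FQ\cap A^{G}$, if $\FQ\neq g\FP$ for every $g$ then the primes $\{g\FP\}$ over the common contracted prime are pairwise incomparable (integral extension), so prime avoidance produces $a\in\FQ$ with $a\notin g\FP$ for all $g$; but then the norm $N(a):=\prod_{g\in G}g\!\cdot\! a$ lies in $A^{G}\cap\FQ$ and not in $\FP$, contradicting the hypothesis. Since $\pi$ is closed (finite) with fibres equal to the orbits, it is a topological quotient map for the $G$-action. To get the structure-sheaf statement I would use that for $s\in A^{G}$ localisation is exact and commutes with the $G$-action, whence $(A_{s})^{G}=(A^{G})_{s}$ (write $A^{G}=\ker(A\to\prod_{g}A,\ a\mapsto(ga-a)_{g})$ and localise); this identifies $\CO_{Y}$ with $(\pi_{*}\CO_{X})^{G}$ on the distinguished opens of $Y$, hence on all of $Y$.

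Now the globalisation. Given $x\in X$, pick an affine open $U\supseteq Gx$ and put $V:=\bigcap_{g\in G}gU$; this is a $G$-stable open containing the orbit, and it is affine because $X$ is separated and a finite intersection of affine opens in a separated scheme is affine. So $X$ has a cover by $G$-stable affine opens $V_{i}$, each carrying its quotient $\pi_{i}:V_{i}\to W_{i}:=\Spec\CO_{X}(V_{i})^{G}$ from the affine case. The gluing hinges on the following local fact: if $\Omega\subseteq V_{i}$ is $G$-stable open then $\pi_{i}(\Omega)$ is open in $W_{i}$ with $\pi_{i}^{-1}(\pi_{i}(\Omega))=\Omega$ (because $V_{i}\setminus\Omega$ is a union of orbits and $\pi_{i}$ is closed), and the sheaf $(\pi_{i*}\CO_{V_{i}})^{G}$ restricted to $\pi_{i}(\Omega)$ depends only on the $G$-scheme $\Omega$, not on the chosen embedding into $V_{i}$. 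Applying this to $\Omega=V_{i}\cap V_{j}$ yields canonical isomorphisms $\pi_{i}(V_{i}\cap V_{j})\cong\pi_{j}(V_{i}\cap V_{j})$ over $V_{i}\cap V_{j}$; canonicity forces the cocycle condition, and gluing the $W_{i}$ produces $(Y,\pi)$. The remaining assertions --- topological quotient, $\CO_{Y}\cong(\pi_{*}\CO_{X})^{G}$, finiteness and surjectivity of $\pi$ --- are local on $Y$ and so reduce to the affine case; separability of $\pi$ follows from Artin's lemma applied to the $G$-action on the function fields of the components of $X$, whose invariant subfields give separable (indeed Galois) extensions. Uniqueness is then formal: any $(Y',\pi')$ satisfying (1) and (2) has the topological quotient $X/G$ as underlying space and $(\pi'_{*}\CO_{X})^{G}$ as structure sheaf, so the identity of $X/G$ lifts to an isomorphism of ringed, hence of algebraic, spaces compatible with the projections.

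Finally, for a free action I would show $\pi$ is étale by checking it on each $V_{i}=\Spec A$: freeness makes the morphism $G\times X\to X\times_{Y}X$, $(g,x)\mapsto(gx,x)$ a bijection, and upgrading this to the algebra isomorphism $A\otimes_{A^{G}}A\xrightarrow{\ \sim\ }\prod_{g\in G}A$, $a\otimes b\mapsto(a\cdot(g\!\cdot\! b))_{g}$, one sees that the base change of $\pi$ along the fppf cover $X\to Y$ is a disjoint union of copies of the identity, so by faithfully flat descent $\pi$ is flat and unramified, i.e. étale. I expect the gluing step to be the main obstacle: the affine input is a short computation with invariant rings, but assembling the local quotients requires the ``saturated open maps to open'' lemma together with the fact that the quotient of $V_{i}\cap V_{j}$ is realised simultaneously inside $W_{i}$ and inside $W_{j}$; in the free case the analogous pressure point is establishing $A\otimes_{A^{G}}A\cong\prod_{g}A$, which is what promotes the set-theoretic description of the fibres to genuine étaleness.
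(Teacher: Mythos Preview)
The paper does not prove this theorem: it is quoted from Mumford's \emph{Abelian Varieties} and cited as such, so there is no in-paper argument to compare against. Your outline is exactly the classical proof one finds in Mumford (or SGA~1, Exp.~V): Noether finiteness for the affine case, the norm/prime-avoidance identification of fibres with orbits, the localisation $(A_{s})^{G}=(A^{G})_{s}$ for the sheaf statement, gluing along $G$-stable affine opens obtained by intersecting translates, and the $A\otimes_{A^{G}}A\cong\prod_{g\in G}A$ route to \'etaleness in the free case. The one step you leave genuinely open is that last isomorphism, as you yourself flag; the usual completion is to first show $A$ is faithfully flat over $A^{G}$ when the action is free (the fibre length is constantly $|G|$, then invoke the local criterion for flatness), after which your map is a surjection between locally free $A$-modules of equal rank and hence an isomorphism. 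A second small omission is separatedness of the glued $Y$, but that follows at once from $\pi$ being finite surjective and $X$ separated.
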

In the remark after the proof, Mumford further showed that
quasi-projective varieties always satisfies the hypothesis of above
theorem. We denote this quotient space (if it exists) as $X / G$. For
a variety $X$ with a $G$ action, and $H \subset G$ a subgroup, let
$X^H$ be the subvariety of fixed points of $H$.


\begin{proposition} \label{orbit decomp}
  With the notation in the above paragraph,
  \begin{enumerate}
  \item $X^H$ is a closed subvariety.
  \item If $H_1 \subseteq H_2$ are subgroups then we have a reverse
    inclusion $X^{H_2} \subseteq X^{H_1}$
  \item If $Y$ is any $G$-invariant component of $X$ then there exists
    an open subset of $Y$ with free action of $G/H$ for unique
    subgroup $H$.  A $G$-invariant component is defined to be a
    minimal $G$-invariant subset of $X$ with dimension equal to $\dim
    X$. Here dimension of an algebraic set is the maximum of the
    dimensions of its irreducible subsets.
  \item If $Y$ is any $G$-invariant algebraic subset of $X$ then there
    exists the set of subgroups $H_i$ for $i = 1, \ldots, r$ and open
    subsets $U_i, i = 1, \ldots, r$ s.t. $G/H_i$ acts freely on open
    subsets $U_i$ for $i = 1, \ldots, r$ of $Y$. Here $r$ is the number
    of $G$-invariant components of $Y$.  Also note that each open
    subsets $U_i$ for $i = 1, \ldots, r$ are pairwise disjoint.
  \end{enumerate}
\end{proposition}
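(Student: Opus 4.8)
The plan is to dispatch (1) and (2) formally, and then to reduce (3) and (4) to the standard fact that a faithful action of a finite group on a reduced finite-type $k$-scheme is free on a dense open subset. For (1), I would write $X^H=\bigcap_{h\in H}X^h$, where $X^h$ denotes the fixed locus of the automorphism $h$. Each $X^h$ is the preimage of the diagonal $\Delta_X\subseteq X\times_k X$ under the morphism $x\mapsto(x,hx)$; since $X$ is separated, $\Delta_X$ is closed, so $X^h$ is closed, and a finite intersection of closed subsets is closed. One then equips $X^H$ with the reduced induced structure (in fact, since $|G|$ is coprime to $p$, $X^H$ is even a smooth closed subvariety, but closedness is all that is needed here). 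Part (2) is immediate: if $x\in X^{H_2}$ then $hx=x$ for all $h\in H_2\supseteq H_1$, so $x\in X^{H_1}$, i.e.\ $X^{H_2}\subseteq X^{H_1}$.

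For (3), let $Y$ be a $G$-invariant component, so that $Y$ is the union of a single $G$-orbit of top-dimensional irreducible components of $X$; in particular $Y$ is equidimensional and no proper $G$-invariant closed subset of $Y$ has dimension $\dim Y$. Set $H:=\ker\bigl(G\to\operatorname{Aut}(Y)\bigr)$, a normal subgroup, so that $G/H$ acts faithfully on $Y$. For every nontrivial class $\bar g\in G/H$, the locus $Y^{\bar g}$ is closed (as in (1)) and \emph{proper}: an automorphism of a reduced $k$-scheme of finite type fixing every point is the identity, so $Y^{\bar g}=Y$ would force $\bar g=\bar e$. As $Y$ is equidimensional, $Y^{\bar g}$ meets each component in a proper closed subset, hence $\dim Y^{\bar g}<\dim Y$; therefore $F:=\bigcup_{\bar g\neq\bar e}Y^{\bar g}$ is a proper (indeed nowhere dense) closed subset, and $U:=Y\setminus F$ is a non-empty, dense, open, $G$-invariant subset on which $G/H$ acts freely. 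The subgroup $H$ is forced: for $G/H'$ to act on $Y$ through the given $G$-action one needs $H'$ to act trivially on $Y$, i.e.\ $H'\subseteq\ker(G\to\operatorname{Aut}(Y))$, and if this inclusion were strict the nontrivial image of the kernel in $G/H'$ would act trivially, so no free action would be possible; hence $H$ is the unique such subgroup.

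For (4), let $Y_1,\dots,Y_r$ be the $G$-invariant components of $Y$, i.e.\ the $G$-orbits of its top-dimensional irreducible components. For $i\neq j$ the intersection $Y_i\cap Y_j$ is contained in a proper closed subset of every irreducible component of $Y_i$, so $\dim(Y_i\cap Y_j)<\dim Y_i$. Applying (3) to each $Y_i$ yields a subgroup $H_i$ and a dense open $G$-invariant $U_i^0\subseteq Y_i$ on which $G/H_i$ acts freely; I then set $U_i:=U_i^0\setminus\bigcup_{j\neq i}Y_j$. Each $U_i$ is open, $G$-invariant, and still non-empty (we removed a set of strictly smaller dimension from a dense subset of the equidimensional $Y_i$), still carries a free $G/H_i$-action, and the inclusions $U_i\subseteq Y_i\setminus\bigcup_{j\neq i}Y_j$ make the $U_i$ pairwise disjoint.

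The only genuinely delicate point is (3): identifying the correct subgroup $H$ and verifying that the ``bad'' locus $\bigcup_{\bar g\neq\bar e}Y^{\bar g}$ has dimension strictly less than $\dim Y$ — which is precisely what keeps the free locus non-empty — and this rests on the fact that a non-identity automorphism of a reduced finite-type $k$-scheme cannot fix every point. Everything else is bookkeeping with closed subsets, separatedness, and dimensions.
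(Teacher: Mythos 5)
Your parts (1) and (2) follow the paper essentially verbatim: $X^H=\bigcap_{h\in H}X^h$, with each $X^h$ closed because it is cut out by the graph/diagonal of $h$ in the separated $X$, and (2) is formal. Your part (3) is also the paper's route — take $H=\ker(G\to\operatorname{Aut}(Y))$ so $G/H$ acts faithfully, and delete the fixed loci of non-trivial elements. In (4) you improve on the paper in one small way: the paper decomposes $Y$ into $G$-invariant components, invokes (3) on each, and then asserts tersely that the resulting $U_i$ are disjoint ``by minimality''; you instead replace $U_i^0$ by $U_i:=U_i^0\setminus\bigcup_{j\neq i}Y_j$, which makes pairwise disjointness a tautology while preserving non-emptiness. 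That is a genuine tidying of the paper's argument.

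There is, however, a gap in your (3) at precisely the spot you flag as the one delicate point. You claim that for every non-trivial $\bar g\in G/H$, the locus $Y^{\bar g}$ ``meets each component in a proper closed subset, hence $\dim Y^{\bar g}<\dim Y$,'' and you justify this only by the fact that a non-identity automorphism of a reduced finite-type $k$-scheme cannot fix every point, i.e.\ $Y^{\bar g}\subsetneq Y$. But strict inclusion does not give the dimension drop when $Y$ is reducible — and a $G$-invariant component of $X$ is in general reducible, being a $G$-orbit of top-dimensional irreducible components. An element $g\notin H$ may fix one irreducible component of $Y$ pointwise while permuting the others; then $Y^g$ still has full dimension, and the union $F=\bigcup_{\bar g\neq\bar e}Y^{\bar g}$ can in principle exhaust $Y$ (e.g.\ $S_3$ permuting three disjoint points: each transposition fixes one point, the union of fixed loci is everything, and the kernel $H$ is trivial). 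To close this you would need to exploit that $Y$ is a \emph{minimal} $G$-invariant set of full dimension — which forces $G$ to act transitively on the top-dimensional components of $Y$ — and argue from there; the fact about automorphisms alone is not enough. (The paper's own proof of (3) is equally terse on this point — it simply says ``$Y^H$ is a proper subset'' — so this is a shared gap rather than a deviation, but your write-up presents the automorphism fact as sufficient, and it is not.)
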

\begin{proof}[Proof of 1.]
  Since $X^H = \cap_{h \in H} X^h$ where $X^h$ is a fixed points of
  automorphism corresponding to $h$ under the action. It is enough to
  prove that the invariant of any automorphism of a variety is a
  closed subset.  Since diagonal map gives a closed embedding we can
  take intersection with closed subset given by graph of
  automorphism. Hence it gives a closed subset of $X$.

  \noindent \emph{Proof of 2.} It clearly follows from the formulae
  $X^{H_i} = \cap_{h \in H_i} X^h$.

  \noindent \emph{Proof of 3.} Since for any algebraic subset there
  exists the subgroup $H$ s.t. $G/H$ acts faithfully(or
  effectively). Assume that $G$ acts faithfully on $Y$. Since for a
  faithful action $Y^H$ is a proper subset of $Y$ for any nontrivial
  subgroup of $G$. Define open subset of $Y$ as 
  \[
  U = Y - ( \cup_{H \trianglelefteq G} Y^H)
  \] where union on right side is over all nontrivial subgroups and
  now it is easy to see that $G$ acts freely on open set
  $U$.

  \noindent \emph{Proof of 4.} Using \emph{3.}, it is enough to prove
  that any algebraic subset can be uniquely written as union of
  $G$-invariant components of $X$. Since $X$ is noetherian it will be
  finite union of irreducible closed subsets. Take finite set $S$ of
  generic points of irreducible subsets of $X$ with same dimension as
  $X$. Now the action of $G$ on $X$ induces the action on finite set
  $S$ as an automorphism of $X$ will take any irreducible subset to
  another irreducible subset of same dimension. Thus $S$ can be
  uniquely written as a disjoint union of $G$-invariant subsets. By
  taking union of closure of these generic points in each invariant
  subsets, we get the $G$-invariant components of $X$. Clearly, any
  nonempty intersection of $U_i$ and $U_j$ for $i \neq j$ will give a
  proper $G$-invariant component, and this will contradict the
  minimality.
\end{proof}

We shall now look at some properties of $G$-sheaves (definition
\ref{def:gsheaf}).
Let the abelian category of all coherent $G$-sheaves be denoted by
$\coh^G(X)$.  In Tohoku paper of Grothendieck \cite{Grothendieck1} it
was proved that $\qcoh^G(X)$ has enough injectives. Therefore derived
functors of various functors like $\pi_* , \pi^*$ and $\otimes$ will
always exist similar to non-equivariant case and for simplicity we
shall use same notation.
We shall denote the bounded derived category of coherent $G$-sheaves
by $\CD^G(X)$. Similar to the case of $\CD^b(X)$ we have a symmetric
monoidal structure on $\CD^G(X)$ given by the (left) derived functor
of tensor structure on $\coh^G(X)$. Given an algebraic variety $X$
with an action of a finite group $G$ we have a natural morphism $\pi :
X \to Y$ which further gives a functor $\pi_* : \coh^G(X) \to
\coh^G(Y)$ and by taking $G$-invariant part of image we can define a
functor $\pi^G_* : \coh^G(X) \to \coh(Y)$ i.e. $\pi^G_*(\CF , \rho)
=(\pi_*(\CF , \rho))^G$ for all $(\CF , \rho) \in \coh^G(X)$.  We have
following result when $G$ acts freely on $X$, see Mumford's book
\cite{Mumford} for proof,

\begin{proposition}
  Let $\pi : X \to Y$ be a natural morphism given by free action of
  the finite group $G$ on $X$. The map $\pi^* : \coh(Y) \to \coh^G(X)$
  is an equivalence of abelian categories with the quasi-inverse
  $\pi^G_*$. Further locally free sheaves corresponds to locally free
  sheaves of the same rank.
\end{proposition}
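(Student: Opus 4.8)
The plan is to reduce the statement to faithfully flat --- in fact finite \'etale --- descent along $\pi$. By Mumford's theorem the morphism $\pi$ is finite, hence affine, and both the claim and the functors $\pi^{*}$, $\pi^{G}_{*}$ are local on $Y$; so we may assume $Y=\Spec B$ and $X=\Spec A$, where $A$ is a finite \'etale $B$-algebra carrying a $G$-action by $B$-algebra automorphisms with $A^{G}=B$. Under this identification a coherent $G$-sheaf on $X$ is the same as a finitely generated $A$-module $M$ together with a semilinear $G$-action, i.e.\ a family of additive isomorphisms $\rho_{g}\colon M\to M$ with $\rho_{g}(am)=g(a)\,\rho_{g}(m)$ satisfying the cocycle identity of Definition~\ref{def:gsheaf}; the functor $\pi^{*}$ becomes $N\mapsto A\otimes_{B}N$ with $G$ acting through the left factor, and $\pi^{G}_{*}$ becomes $M\mapsto M^{G}$.

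The key point is that freeness of the action turns $\pi$ into a $G$-torsor: the $G$-equivariant $X$-morphism $X\times G\to X\times_{Y}X$, $(x,g)\mapsto(x,gx)$, is a morphism of finite \'etale $X$-schemes of the same degree $|G|$ which is injective on points, hence an isomorphism. Algebraically this says that $A\otimes_{B}A\to\prod_{g\in G}A$, $a\otimes a'\mapsto(a\cdot g(a'))_{g}$, is an isomorphism (equivalently, the skew group algebra $A\# G\to\End_{B}(A)$ is an isomorphism). Consequently a semilinear $G$-action on an $A$-module $M$ is precisely a descent datum for the cover $\Spec A\to\Spec B$: after identifying $A\otimes_{B}A$ with $\prod_{g}A$, an isomorphism between the two pullbacks of $M$ is exactly a family $(\rho_{g})_{g\in G}$, and the cocycle condition over $A\otimes_{B}A\otimes_{B}A$ unwinds to the compatibility of Definition~\ref{def:gsheaf}.

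Since $B\to A$ is faithfully flat, faithfully flat descent now gives that $N\mapsto A\otimes_{B}N$ is an equivalence from finitely generated $B$-modules onto $A$-modules equipped with a descent datum, with quasi-inverse sending a module-with-descent-datum $M$ to the submodule on which the datum is the identity; in the present dictionary this submodule is $M^{G}$. Thus $\pi^{*}$ and $\pi^{G}_{*}$ are mutually quasi-inverse, and it remains only to check that the unit $N\to(A\otimes_{B}N)^{G}$ and the counit $A\otimes_{B}M^{G}\to M$ are isomorphisms: the unit because forming $G$-invariants commutes with the faithfully flat base change $B\to A$, where the assertion is obvious, and the counit because it becomes an isomorphism after the faithfully flat base change $-\otimes_{B}A$, where, using $A\otimes_{B}A\cong\prod_{g}A$, it is the evident statement for the trivial $G$-torsor. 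I expect this counit isomorphism to be the only genuinely substantial point; everything else is bookkeeping with Definition~\ref{def:gsheaf}, and the resulting additive equivalence of abelian categories is automatically exact.

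For the statement about locally free sheaves: since $\pi$ is \'etale, $\pi^{*}$ carries a locally free sheaf of rank $n$ to a locally free sheaf of rank $n$. Conversely, if $(\CF,\rho)\in\coh^{G}(X)$ is locally free of rank $n$, then $\pi^{*}\pi^{G}_{*}(\CF,\rho)\cong(\CF,\rho)$ is locally free of rank $n$; as $\pi$ is faithfully flat and the property ``locally free of rank $n$'' descends along faithfully flat morphisms, $\pi^{G}_{*}(\CF,\rho)$ is locally free of rank $n$ on $Y$.
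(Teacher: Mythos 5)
The paper itself gives no proof of this proposition; it simply cites Mumford's book for it. Your argument is correct and is the standard modern rendition of what Mumford does: reduce to $Y=\Spec B$, $X=\Spec A$ affine, observe that freeness makes $X\to Y$ a $G$-torsor so that $A\otimes_B A\cong\prod_{g\in G}A$, translate a semilinear $G$-action into a faithfully flat descent datum for $B\to A$, and invoke descent, with the unit and counit checked after the base change $-\otimes_B A$. Two small points are worth tightening if you write this out in full. First, the step ``injective on points, hence an isomorphism'' for $X\times G\to X\times_Y X$ deserves a word: both sides are finite \'etale over $X$ of degree $|G|$, so the map between them is finite \'etale; a finite \'etale monomorphism is an open and closed immersion, and the degree count then forces surjectivity. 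Second, your identification of the descent-theoretic ``equalizer'' submodule with $M^G$ should be spelled out: under $A\otimes_B A\cong\prod_g A$, the canonical descent isomorphism for $A\otimes_B N$ becomes the family of $\rho_g$'s, and the condition $\phi(1\otimes m)=m\otimes 1$ becomes $\rho_g(m)=m$ for all $g$. With those two remarks made explicit, the proof is complete; the locally-free statement via \'etale pullback and faithfully flat descent of the rank-$n$ property is fine as written.
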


Now we can extend above equivalence to get a tensor equivalence
$\pi^*$ between tensor triangulated categories $\CD^b(Y)$ and
$\CD^G(X)$.
In general these two categories are not equivalent. If
we now take the case when $G$ acts trivially on an algebraic variety
$X$ then we have the canonical decomposition of each objects of
$\CD^G(X)$ similar to the case of finite dimensional representation of
finite group which is a particular case of this by taking $X$ to be a
single point $\Spec k$. So, more generally, we have following result,

\begin{proposition} \label{can_decomp} Let $X$ be an algebraic variety
  defined over $k$ with a $G$ action, and let $H$ be a subgroup of $G$
  with the property that it acts trivially on $X$. Then any object
  $(\CF, \rho) \in \CD^G(X)$ has the canonical decomposition as
  follows,
  \[
  (\CF, \rho) = \oplus_{\lambda} W_{\lambda} \otimes (\CF,
  \rho)_{\lambda}
  \]
  where $(\CF, \rho)_{\lambda} = (W^*_{\lambda} \otimes (\CF, \rho)
  )^H$ and $W_{\lambda}$ is a finite dimensional representation of the
  subgroup $H$ with the natural action of the group $G/H$ on $(\CF,
  \rho)_{\lambda}$.
\end{proposition}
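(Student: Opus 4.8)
The plan is to reduce the statement to the classical representation theory of the finite group $H$ acting on a point, done fibrewise/sheaf-wise over $X$. Since $H$ acts trivially on $X$, a $G$-sheaf structure $(\CF,\rho)$ restricts to an action of $H$ on $\CF$ by $\CO_X$-linear automorphisms (the isomorphisms $\rho_h\colon\CF\to h^*\CF=\CF$ for $h\in H$), i.e.\ $\CF$ becomes a sheaf of $k[H]$-modules. Because $|G|$ (hence $|H|$) is coprime to $\chr k$, Maschke's theorem says $k[H]$ is semisimple, so $k[H]\cong\bigoplus_\lambda \End_k(W_\lambda)$ with $W_\lambda$ running over the irreducible $H$-representations. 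The first step is therefore to set up the idempotent decomposition of the identity in $k[H]$, $1=\sum_\lambda e_\lambda$ with $e_\lambda=\frac{\dim W_\lambda}{|H|}\sum_{h\in H}\overline{\chi_\lambda(h)}\,h$, and observe that each $e_\lambda$ acts as an $\CO_X$-linear (hence $\coh^G(X)$-, and then $\CD^G(X)$-) endomorphism of $(\CF,\rho)$, because it is central and commutes with the residual $G/H$-action. This yields $(\CF,\rho)=\bigoplus_\lambda e_\lambda(\CF,\rho)$ inside $\CD^G(X)$.

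The second step is to identify the $\lambda$-isotypic summand $e_\lambda(\CF,\rho)$ with $W_\lambda\otimes_k(\CF,\rho)_\lambda$ where $(\CF,\rho)_\lambda:=\bigl(W_\lambda^\ast\otimes_k(\CF,\rho)\bigr)^H$. Here $W_\lambda\otimes_k(-)$ means the exact functor on $\coh^G(X)$ (equivalently on $\CD^G(X)$) given by tensoring the underlying sheaf with the finite-dimensional $k$-vector space $W_\lambda$ and twisting the linearization by the $H$-action on $W_\lambda$; and $(-)^H$ is the ($H$-invariants =) image-of-$e_{\mathrm{triv}}$ functor, which lands in the subcategory of objects on which $H$ acts trivially, carrying a residual $G/H$-action. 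The canonical isomorphism $W_\lambda\otimes_k(W_\lambda^\ast\otimes_k M)^H\xrightarrow{\ \sim\ } e_\lambda M$ is the standard evaluation/co-evaluation map from the theory of isotypic components; one checks it is natural, $\CO_X$-linear, and $G$-equivariant for the $G/H$-action, so it is an isomorphism in $\coh^G(X)$ and passes to $\CD^G(X)$. Since $\CD^G(X)$ has its symmetric monoidal structure from the derived tensor on $\coh^G(X)$ and $W_\lambda$ is a flat (indeed free) $k$-module, all of this is compatible with the triangulated structure with no derived-functor subtleties: everything in sight is exact.

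The third step is purely bookkeeping: $W_\lambda$ is a $G/H$-representation only in the trivial/split case, so in general one keeps $W_\lambda$ as a plain $H$-representation sitting in $\CD^G(X)$ via pullback along $X\to\Spec k$ followed by the $H$-linearization, while the $G/H$-action is genuinely carried by $(\CF,\rho)_\lambda$; the displayed formula in the statement is then exactly the assembled decomposition. The main obstacle I anticipate is not conceptual but notational/functorial: making precise the three commuting structures — the $\CO_X$-module structure, the $H$-action (used to form $e_\lambda$ and invariants), and the residual $G/H$-action (which must be shown to commute with the $e_\lambda$ so that each summand is again a genuine $G$-sheaf) — and verifying that the classical isotypic isomorphism is equivariant for all of them simultaneously. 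Once the $H$ and $G/H$ actions are seen to commute (immediate since $H$ is normal in $G$ by hypothesis, being a subgroup acting trivially — one should double-check normality is either assumed or automatic here, and if not, work with the subgroup it generates), the decomposition in $\coh^G(X)$ is the classical one applied with $k$-coefficients sheafified, and the passage to $\CD^G(X)$ is formal because all functors involved are exact. No appeal to semisimplicity of $\coh^G(X)$ itself is needed; only Maschke for $k[H]$.
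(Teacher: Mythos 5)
Your route via the central idempotents $e_\lambda \in k[H]$ is genuinely different from the paper's. The paper constructs the global evaluation map $ev : \oplus_\lambda W_\lambda \otimes (\CF,\rho)_\lambda \to (\CF,\rho)$, uses d\'{e}vissage to reduce to pure sheaves, localises to the case of an $A$-module $M$ with $G$-action, and then proves $k$-linear bijectivity of $ev$ by checking it on finite-dimensional $G$-invariant subspaces of $M$, invoking the classical isotypic decomposition there. Your approach observes instead that, since $H$ acts trivially on $X$, the $\rho_h$ make $\CF$ a sheaf of $k[H]$-modules, and you then split $1 = \sum_\lambda e_\lambda$ once and for all; the identification $e_\lambda(\CF,\rho)\simeq W_\lambda\otimes(\CF,\rho)_\lambda$ is then the usual evaluation/coevaluation isomorphism, natural and $\CO_X$-linear, so it passes to $\CD^G(X)$ by exactness. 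This sidesteps the affine localisation entirely and is arguably the more transparent packaging of the same underlying fact (Maschke for $k[H]$), which both proofs ultimately use.

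You were right to single out the compatibility of the decomposition with the residual $G/H$-action as the delicate point, but your dismissal of it as ``immediate since $H$ is normal'' is not correct and is the one genuine soft spot in the proposal. For $g\in G$ one has $g\,e_\lambda\,g^{-1}=e_{g\cdot\lambda}$, where $g\cdot\lambda$ is the conjugate irreducible character of $H$. Normality of $H$ guarantees only that $g\cdot\lambda$ is again a character of $H$, not that $g\cdot\lambda=\lambda$; in general the isotypic components $e_\lambda\CF$ are permuted rather than preserved by $G$, and $(\CF,\rho)_\lambda$ carries a $G/H$-action only when $G/H$ acts trivially on the set of irreducible characters of $H$ (e.g.\ when $H=G$, or when $H$ is central, which covers the way the proposition is actually used in the paper). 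To be fair, the paper's own proof asserts the $G/H$-action in one line without addressing this either; but since you explicitly flagged it as the main obstacle, you should resolve it honestly — either add the hypothesis that $G/H$ acts trivially on $\mathrm{Irr}(H)$, or replace the indexing by irreducibles with indexing by $G/H$-orbits of irreducibles. (A very small further nitpick: for a general field $k$ coprime to $|H|$ the idempotent should be written with $\chi_\lambda(h^{-1})$ rather than a complex conjugate.)
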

\begin{proof}
  In any $k$-linear category we can identify $W \otimes \CG$, where
  $W$ is a finite dimensional vector space, with finite direct sum of
  object $\CG$.  Similarly we can define $Hom(V,\CG):= (V^* \otimes
  \CG)$ for any object $\CG$ and also we get the natural evaluation
  map $ev: V \otimes Hom(V \otimes \CG) \to \CG$. Moreover if $V$ and
  $\CG$ have $G$-action then $ev$ is $G$-equivariant map. In our
  situation using \emph{d\'{e}vissage} it is enough to study this $ev$
  map for pure sheaf, say $(\CF , \rho)$.  Now considering the induced
  action of $H$ on $(\CF , \rho)$ we get a map,
  \[
  ev :\oplus_{\lambda} W_{\lambda} \otimes (\CF ,\rho)_{\lambda} \to
  (\CF , \rho).
  \]
  We shall prove that the map $ev$ is an isomorphism. Since this is a
  local question we can reduce to the case of $A$-module $M$ with a
  $G$-action where $A$ is $k$-algebra. Since the map $ev$ is evidently
  a $A$-module morphism, it is enough to prove bijection of the map
  $ev$ as a $k$-linear map. Restriction of this map is an isomorphism
  for any $G$-invariant finite dimensional vector subspace of
  $M$. This follows from the canonical decomposition of finite
  dimensional representation of $G$. Since any element is contained in
  a finite dimensional $G$-invariant subspace of $M$ we get required
  bijection of the map $ev$. Since induced action of $H$ is trivial on
  $(\CF , \rho)_{\lambda} $ we have action of quotient group $G/H$.
\end{proof}

Note that if $G$ acts trivially on $X$ then we can take $H = G$ and as
a particular case we shall get the canonical decomposition,
\[
(\CF, \rho) = \oplus_{\lambda} V_{\lambda} \otimes (\CF,
\rho)_{\lambda}
\]
where $(\CF, \rho)_{\lambda} = (V^*_{\lambda} \otimes (\CF, \rho))^G$
and $V_{\lambda}$ is a finite dimensional representation of the group
$G$.  Now we shall give a distinguished triangle for any complex of
$G$-equivariant coherent sheaf $\CF$ over $X$. We have following
result,
\begin{proposition}  \label{supp red}
  Let $G$, $k$ and $X$ be as above.
  \begin{enumerate}
  \item Suppose $U$ is any $G$-invariant open subset of $X$ with
    induced action. If $\pi$ denotes the projector $\frac{1}{|G|}
    \sum_{g \in G} \rho_g^*$ on $X$ where $\rho_g$ is automorphism of
    $X$ coming from the action of $G$ then $ i_U^* \circ \pi = \pi
    \circ i_U^*$. Here, $\pi$ is also used to denote its restriction
    on open set $U$.
  \item Suppose $G$ acts faithfully on $X$. If $\CF \in \CD^G(X)$ with
    $\supph(\CF) = X$ then we have a distinguished triangle
    \[
    \pi^* \pi_*^G(\CF) \to \CF \to \CF_1
    \]
    with $\supph(\CF_1) \subsetneq \supph(\CF)$. Same is true if we
    have faithful action of $G$ on $\supph(\CF) \subsetneq X$.
  \end{enumerate}
\end{proposition}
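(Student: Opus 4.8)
The plan is to treat the two parts separately, with part (1) a formal check and part (2) the real content. For part (1), I would unwind definitions: the projector $\pi = \frac{1}{|G|}\sum_g \rho_g^*$ is a $k$-linear endomorphism of the (derived) functor sending a $G$-sheaf to its underlying sheaf, and $i_U^*$ is just restriction to the open set $U$. Since $U$ is $G$-invariant, each automorphism $\rho_g$ of $X$ restricts to an automorphism of $U$, and restriction to an open set commutes with pullback along an automorphism in the obvious way (both are exact, and on sections over $V \subseteq U$ they agree). Summing over $g$ and dividing by $|G|$ — which is invertible in $k$ since $|G|$ is coprime to $\operatorname{char} k$ — gives $i_U^* \circ \pi = \pi \circ i_U^*$. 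I would phrase this on the level of presheaves/stalks to avoid any subtlety about sheafification, and remark that because everything here is exact we may work with honest complexes rather than in the derived category.

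For part (2), the idea is to build the desired triangle from the unit of the adjunction $(\pi^*, \pi_*^G)$. First I would produce the natural counit-type morphism $\varepsilon : \pi^*\pi_*^G(\CF) \to \CF$ in $\CD^G(X)$ and complete it to a distinguished triangle $\pi^*\pi_*^G(\CF) \xrightarrow{\varepsilon} \CF \to \CF_1 \to \pi^*\pi_*^G(\CF)[1]$; then the entire task reduces to showing $\supph(\CF_1) \subsetneq \supph(\CF)$, equivalently that $\varepsilon$ is an isomorphism on a dense open subset of $\supph(\CF) = X$. By Proposition~\ref{orbit decomp}(3), since $G$ acts faithfully on $X$ there is a dense open $U \subseteq X$ on which $G$ acts freely (after replacing $X$ by a $G$-invariant component and noting the complement of all $X^H$ is open and dense, being the complement of finitely many proper closed subsets). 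On such a $U$, the preceding Proposition says $\pi^* : \coh(U/G) \to \coh^G(U)$ is an equivalence with quasi-inverse $\pi_*^G$, so the counit $\varepsilon$ is an isomorphism after applying $i_U^*$. Here I would use part (1): restriction $i_U^*$ commutes with the formation of $\pi_*^G$ (which is built from $\pi_*$ followed by the projector $\pi$ onto $G$-invariants) and with $\pi^*$, so $i_U^*(\varepsilon)$ is identified with the counit of the adjunction for the free action on $U$, hence an isomorphism. Therefore $i_U^*(\CF_1) = 0$, so $\supph(\CF_1) \subseteq X \setminus U$, which is a proper closed subset, giving $\supph(\CF_1) \subsetneq \supph(\CF)$.

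For the last sentence of the statement — the case where $G$ acts faithfully on a proper closed $\supph(\CF) \subsetneq X$ — I would run the same argument with $Y := \supph(\CF)$ in place of $X$: apply Proposition~\ref{orbit decomp}(3)--(4) to get a dense (in $Y$) $G$-stable open $U' \subseteq Y$ with free $G$-action, form the triangle from the adjunction counit, and conclude $\supph(\CF_1) \subseteq \supph(\CF) \setminus U' \subsetneq \supph(\CF)$; the fact that supports behave well under the relevant restrictions is again part (1).

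The main obstacle I anticipate is the bookkeeping in part (2): making precise that the derived functor $\pi_*^G$ restricts correctly to open sets (so that part (1) applies in the derived setting and not merely for sheaves), and checking that $\varepsilon$, after restriction to $U$, genuinely coincides with the adjunction counit for the free quotient $\pi|_U : U \to U/G$ rather than just "an isomorphism by abstract nonsense." Since $\pi$ is finite (hence affine), $\pi_*$ is exact and $\pi_*^G$ is exact as well (the projector onto invariants is exact because $|G|$ is invertible), so the derived functors are computed naively and the compatibility with $i_U^*$ follows from flat base change along the open immersion $U/G \hookrightarrow X/G$; I would state this as a short lemma and then the support estimate is immediate.
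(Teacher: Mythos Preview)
Your proposal is correct and follows the paper's overall strategy: part (1) is the commutative square $\rho_g \circ i_U = i_U \circ \rho_g$ summed over $G$, and part (2) completes the adjunction map $\eta:\pi^*\pi_*^G(\CF)\to\CF$ to a triangle and shows its restriction to the free locus $U$ from Proposition~\ref{orbit decomp} is an isomorphism via flat base change and part (1), forcing $\supph(\CF_1)\subseteq X\setminus U$.

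The one methodological difference worth noting is how the isomorphism $i_U^*(\eta)$ is established for a general complex. The paper runs an induction on the amplitude of $\CF$: the base case is a single sheaf, and the inductive step uses the truncation triangle $\tau^{\leq n-1}\CF\to\CF\to\CH^n(\CF)[-n]$ together with the two-out-of-three property for isomorphisms in a triangle. You instead argue directly that $\pi_*$ (finite map) and $(-)^G$ (projector, $|G|$ invertible) are exact on the abelian level, so the derived functors are computed termwise and flat base change for the open immersion $U/G\hookrightarrow X/G$ applies to complexes without d\'evissage. Your route is shorter and conceptually cleaner; the paper's amplitude induction has the minor advantage of only invoking base change and the free-action equivalence for honest sheaves, which some readers may find more concrete.
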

\begin{proof}[Proof of 1.]
  Since $U$ is a $G$-invariant subset, each automorphism $\rho_g$ of
  $X$ induces an automorphism. For simplicity we use the same notation
  $\rho_g$. Now assertion immediately follows from the following
  commutative square,
  \[
  \xymatrix{ U \ar@{^{(}->}[r]^{i_U} \ar[d]^{\rho_g} & X
    \ar[d]^{\rho_g}\\
    U \ar@{^{(}->}[r]^{i_U} & X }
  \]
  and additivity.

  \noindent \emph{Proof of 2.} Since $G$ acts faithfully on $X$ we can
  use proposition \ref{orbit decomp} to get an open subset $U \subseteq
  X$ with free action of group $G$ . We shall use induction on
  amplitude length, $ampl(\CF)$.  When $ampl(\CF) = 1$ then $\CF$ is a
  shift of a coherent sheaf so enough to prove for coherent sheaf. Now
  using the fact that $\supph(\CF) = X$ we have $i_U^*(\CF) \neq
  0$. There is a natural morphism coming from adjunction and inclusion
  of $G$-invariant part, say $\eta : \pi^* \pi^G_*(\CF) \to
  \CF$. Using flat base change and part 1 of\ref{supp red} we get an
  isomorphism $i_U^* \pi^* \pi^G_*(\CF) \simeq \pi^*
  \pi^G_*(i_U^*\CF)$. Now this will give an isomorphism, as $G$ act
  freely on $U$, i.e. $i_U^*(\eta) : i_U^* \pi^* \pi^G_*(\CF) \to
  i_U^* \CF$ is an isomorphism. Hence cone of the map $\eta$ will have
  support outside an open set $U$. This completes the first step of
  induction.

  Now assume the for all $\CF$ with $ampl(\CG) \leq (n-1)$ we have
  such a distinguished triangle. Now consider $\CF$ with $ampl(\CF) =
  n$ with highest cohomology in degree $n$. We have usual truncation
  distinguished triangle $ \tau^{\leq(n-1)}(\CF) \to \CF \to
  \CH^n(\CF)[-n]$. Using exactness of $i_U^*$ and argument similar to
  first step of induction we have a following commutative diagram
  (we have used same notation $\eta$ for different sheaves),
  \[
  \xymatrix{ i_U^*\pi^*\pi^G_*\tau^{\leq(n-1)}(\CF) \ar[r]
    \ar[d]^{i_U^*(\eta)} & i_U^*\pi^*\pi^G_*\CF \ar[r]
    \ar[d]^{i_U^*(\eta)} &
    i_U^*\pi^*\pi^G_*\CH^n(\CF)[-n] \ar[d]^{i_U^*(\eta)}\\
    i_U^*\tau^{\leq(n-1)}(\CF) \ar[r] & i_U^*\CF \ar[r] &
    i_U^*\CH^n(\CF)[-n] }
  \]
  Since both the extreme vertical arrows are isomorphism using
  induction hypothesis, we have isomorphism of the middle
  $i_U^*(\eta)$. Therefore cone of the map $\eta$ will have proper
  support.
\end{proof}

\section{Example : Derived category of equivariant sheaves}\label{sec:dercatshv}
	In this section we shall compute Balmer's triangular spectrum for some
	particular examples. This computation of triangular spectrum also
motivates the need for some finer
	geometric structures attached to a given tensor triangulated category.
\subsection{Equivariant sheaves}
	In this example we shall compute Balmer's triangular spectrum for
	equivariant sheaves over some quasi-projective varieties with $G$-
	action. We shall always consider the varieties over some fixed field $k$.
        We shall first do some particular cases before going to general
	case. The general case will be done in the next subsection.

\subsubsection*{Case 1: $X$ is a point}
	Let $G$ be a finite group and $k$ be any field of char 0.
	As usual $\Rep(G)$ is the category of all finite dimensional $k$ linear
	representation of a group $G$. We can define a strict symmetric monoidal
	structure on this category  using the usual tensor product of
	representations i.e. if $V_1$ and $V_2$ are two representations of $G$
	then $V_1 \otimes V_2$ is the tensor product as $k$ vector spaces with
	diagonal action. We shall denote the bounded derived category of abelian
	category $\Rep(G)$ (resp. $\Rep(\{0\})$, for the trivial group $\{0\}$)
	as $\CD_{k[G]}$ (resp. $\CD_k$ ). We can extend the above tensor product
	of representations to get a symmetric tensor triangulated structure on
	$\CD_{k[G]}$.

	\begin{proposition}
 		$\Spec(\CD_{k[G]}) \cong \Spec(\CD_k) \cong \Spec(k)$. 
	\end{proposition}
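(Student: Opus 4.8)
The plan is to classify all thick tensor ideals of $\CD_{k[G]}$ outright and then read the triangular spectrum and its structure sheaf off that classification; the statement for $\CD_k$ is then the special case $G=\{0\}$ (or, literally, Balmer's reconstruction theorem).

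First I would use that $\chr(k)=0$, so by Maschke's theorem $\Rep(G)$ is a semisimple abelian category. Two consequences get used repeatedly: (i) every object $a\in\CD_{k[G]}$ splits (non-canonically) as $\bigoplus_i H^i(a)[-i]$, and for $V,W\in\Rep(G)$ placed in degree $0$ one has $\operatorname{Hom}_{\CD_{k[G]}}(V,W)=\operatorname{Hom}_{\Rep(G)}(V,W)$; (ii) for any nonzero $W\in\Rep(G)$ the trivial representation $1=k$ is a direct summand of $W\otimes W^{*}\cong\End_k(W)$, because $\operatorname{Hom}_{\Rep(G)}(1,W\otimes W^{*})=\End_{\Rep(G)}(W)\ni\mathrm{id}_W$ is nonzero and $\Rep(G)$ is semisimple. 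I would also note that the tensor product of representations is exact in each variable (the underlying functor is $\otimes_k$), so the derived tensor is underived and $H^n(a\otimes b)=\bigoplus_{p+q=n}H^p(a)\otimes H^q(b)$.

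Next comes the key step: the only thick tensor ideals of $\CD_{k[G]}$ are $0$ and $\CD_{k[G]}$. Given a thick tensor ideal $\CA\neq 0$, pick $0\neq a\in\CA$; some $W=H^i(a)$ is a nonzero representation, and $W[-i]$ is a direct summand of $a$, hence $W[-i]\in\CA$ by thickness, hence $W\in\CA$ since a thick tensor ideal is triangulated (axiom (a)) and so closed under shifts. As $\CA$ is a tensor ideal it contains $W\otimes W^{*}$, and then its summand $1$ by thickness; since $b\cong b\otimes 1$ for all $b$, the tensor ideal generated by $1$ is all of $\CD_{k[G]}$, so $\CA=\CD_{k[G]}$. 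It remains to see that $0$ is a prime ideal: it is proper because the category is nontrivial, and if $a,b\neq 0$ then choosing $i,j$ with $H^i(a)\neq 0\neq H^j(b)$, the summand $H^i(a)\otimes H^j(b)$ of $H^{i+j}(a\otimes b)$ is nonzero (tensor product of nonzero $k$-vector spaces, no cancellation since everything is a direct sum), whence $a\otimes b\neq 0$. Since $\Spc(\CD_{k[G]})$ is nonempty by Lemma~\ref{zorn} applied with $\CI=0$ and the tensor multiplicative family $\CS=\{1\}$, we conclude $\Spc(\CD_{k[G]})=\{0\}$, a one-point space.

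Finally I would compute the structure sheaf. On the unique nonempty open set the closed complement is empty, the associated ideal is $0$, and the defining presheaf takes the value $\End_{\CD_{k[G]}}(1)=\End_{\Rep(G)}(k)=k$; sheafification over a point changes nothing, so $\Spec(\CD_{k[G]})\cong\Spec(k)$ as locally ringed spaces, and taking $G=\{0\}$ gives $\Spec(\CD_k)\cong\Spec(k)$ as well. The only step with any real content is the ideal classification, whose entire force comes from observation (ii), that the unit sits inside $W\otimes W^{*}$; once that and the semisimplicity of $\Rep(G)$ are in hand, nonemptiness, primality of $0$, and the structure-sheaf computation are routine.
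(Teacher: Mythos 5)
Your proof is correct, but it rests on a different key lemma than the paper's first proof. To see that every nonzero thick tensor ideal is all of $\CD_{k[G]}$, you use the rigidity argument: for any nonzero $W\in\Rep(G)$, the unit $1$ is a direct summand of $W\otimes W^{*}$ because $\Hom_{\Rep(G)}(1,W\otimes W^{*})\cong\End_{\Rep(G)}(W)$ is nonzero, $1$ is simple, and $\Rep(G)$ is semisimple. The paper instead proves the lemma that for any nonzero $V$, the trivial representation is a direct summand of $V^{\otimes|G|}$, which it extracts from a symmetrizing section $S^{|G|}(V)\hookrightarrow T^{|G|}(V)$ applied to the $G$-invariant element $\prod_{g\in G}g\cdot v$. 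Your route is shorter, more standard (it is the basic coevaluation/evaluation fact in a rigid tensor category), and strictly more general: the paper's symmetrization requires $|G|!$ to be invertible, i.e.\ $\chr k=0$ or $\chr k>|G|$, while your argument needs only Maschke (that is, $\chr k \nmid |G|$), and so already attains the generality the paper credits only to its second, functorial proof. You also spell out two small points the paper leaves implicit: that $0$ is prime (via K\"unneth over a field: a nonzero top cohomology of $a$ tensored with a nonzero top cohomology of $b$ survives in $a\otimes b$), and that $\Spc$ is nonempty. Both approaches share the same skeleton, namely that the spectrum is the single point $\{0\}$ and $\End(1)=k$, so the differences are confined to how one sees that the unit is reached from any nonzero object.
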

	\begin{proof}
		Since $\Rep(\{0\})$ is a semisimple abelian category with $k$ as
its
		unit it is easy to see that $\Spec(\CD_k) \cong \Spec(k)$ as a
		variety. Therefore it is enough to prove the first isomorphism. 
The
		unit object of $\CD_{k[G]}$ is $k$ with endomorphism ring
isomorphic
		to $k$ so it remains to say that the trivial ideal, i.e. ideal
with
		only zero object, is the only prime ideal. This follows from the
		following lemma.

		\begin{lemma}
			Any representation of a finite group contains the
trivial
			representation as a direct summand of some tensor power.
		\end{lemma}
		\begin{proof}
			Let $V$ be a finite dimensional representation of a
finite group
			$G$. Since $\chr(k)$ is $0$ we have an graded vector
space
			isomorphism of the symmetric algebra with the symmetric
tensors
			contained in tensor algebra $T(V)$, i.e. subspace of
$T^n(V)$
			fixed by natural action of symmetric group $S_n$ for all
$n$.

			In fact, this isomorphism is given by a section of the
natural
			quotient map from $T(V)$ to $S(V)$ i.e. 
			\[
				v_1 \cdots v_k \mapsto
\frac{1}{k!}\underset{\sigma \in
				S_k}{\sum} v_{\sigma(1)} \otimes \ldots \otimes
				v_{\sigma(k)} \text{.}
			\]
			Now if we take the image of a nonzero element
$\underset{g \in
			G}{\prod}g.v \in S^{|G|}(V)$ under this isomorphism then
we
			shall get an nonzero fixed vector of $|G|$-symmetric
tensors and
			hence it will give the trivial representation as a
direct
			summand of $V^{\otimes|G|}$ using the semisimplicity of
			$\Rep(G)$.
		\end{proof}
		Using the above lemma and thickness it is easy to see that any
		non trivial ideal is full.
	\end{proof}
	But for the sake of generalisation  we shall give another proof of the
	first isomorphism.

	Consider the two exact tensor functors $F : \CD_{k[G]} \to \CD_k$ and
	$G : \CD_k \to \CD_{k[G]}$ where $F$ is the forgetful functor and $G$
	comes from the augmentation map of the group algebra $k[G]$ i.e. sending
	each complex of vector space to a complex of $k[G]$ module with the
	trivial action of a group $G$. Note that $F \circ G = Id$ and hence
	$\Spec(G) \circ \Spec(F) = \Id$. We now prove the following lemma.
	\begin{lemma}
		$\Spec(F) \circ \Spec(G) = \Id$.
	\end{lemma}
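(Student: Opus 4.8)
The plan is to prove the sharper statement that $(G\circ F)^{-1}(\CP)=\CP$ for every prime ideal $\CP\subset\CD_{k[G]}$; this yields the lemma at once, since unwinding the definitions gives $\bigl(\Spec(F)\circ\Spec(G)\bigr)(\CP)=F^{-1}\bigl(G^{-1}(\CP)\bigr)=(G\circ F)^{-1}(\CP)$. Writing $GF(a)$ for the complex underlying $a$ equipped with the trivial $G$-action, the task is thus to show that for every object $a\in\CD_{k[G]}$ one has $a\in\CP$ if and only if $GF(a)\in\CP$.

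First I would establish a projection-formula isomorphism. Let $R\in\CD_{k[G]}$ denote the regular representation $k[G]$, placed in degree $0$. For a finite-dimensional representation $V$ (in degree $0$), the $k$-linear map $v\otimes g\mapsto g^{-1}v\otimes g$ is a $G$-equivariant isomorphism from $V\otimes_k R$ (with the diagonal action) onto $V^{(0)}\otimes_k R$, where $V^{(0)}$ is $V$ carrying the trivial action and $G$ acts on the target only through the factor $R$; both sides are manifestly isomorphic to a direct sum of $\dim_k V$ copies of $R$. This isomorphism is natural with respect to $G$-equivariant maps $V\to W$ — the verification uses precisely that such maps commute with the $G$-action — hence it is compatible with the differentials of any bounded complex of representations, and so assembles termwise into a natural isomorphism $a\otimes R\simeq GF(a)\otimes R$ in $\CD_{k[G]}$.

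Next I would observe that $R$ contains the unit $1$ as a direct summand: since $|G|$ is invertible in $k$, the line spanned by $\sum_{g\in G}g$ is a trivial subrepresentation that splits off $k[G]$ (equivalently, apply the lemma on trivial summands to $V=R$). Consequently $R$ lies in no proper thick tensor ideal, and in particular $R\notin\CP$. The lemma now follows formally: if $a\in\CP$ then $a\otimes R\in\CP$ because $\CP$ is a tensor ideal, so $GF(a)\otimes R\in\CP$, and primeness together with $R\notin\CP$ forces $GF(a)\in\CP$; conversely, if $GF(a)\in\CP$ then $GF(a)\otimes R\in\CP$, hence $a\otimes R\in\CP$, and primeness again gives $a\in\CP$. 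Thus $(G\circ F)^{-1}(\CP)=\CP$ for all $\CP$, i.e. $\Spec(F)\circ\Spec(G)=\Id$ on underlying sets; combined with the already noted identity $\Spec(G)\circ\Spec(F)=\Id$, this makes $\Spec(F)$ and $\Spec(G)$ mutually inverse homeomorphisms, and since both are induced by tensor functors they identify the structure sheaves as well.

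The whole argument rests on one idea — that tensoring with the regular representation erases the $G$-action (the isomorphism $a\otimes R\simeq GF(a)\otimes R$) — together with the elementary remark that $R$ can never belong to a prime ideal because it contains the trivial representation as a summand; everything else is bookkeeping. The only place that demands a little care is checking that the pointwise isomorphism $V\otimes R\cong V^{(0)}\otimes R$ respects differentials so that it globalizes to complexes, and I do not anticipate any serious obstacle beyond that.
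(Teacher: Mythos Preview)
Your argument is correct, but it follows a different route from the paper's. The paper uses the isotypical decomposition $V=\bigoplus_\lambda V_\lambda\otimes(V_\lambda^*\otimes V)^G$: each piece $(V_\lambda^*\otimes V)^G$ carries the trivial action and is a direct summand of $V_\lambda^*\otimes V$ (via the averaging projector), so thickness and additivity alone give $V\in\CP\iff (V_\lambda^*\otimes V)^G\in\CP$ for all $\lambda$, and since $G\circ F$ fixes objects with trivial action the claim follows. You instead use the single ``projection formula'' isomorphism $a\otimes R\simeq GF(a)\otimes R$ for the regular representation $R=k[G]$, together with the observation that $R\notin\CP$ because $1$ is a summand of $R$, and then invoke primeness of $\CP$ to cancel $R$.

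Your approach is slicker and avoids any case-by-case over irreducibles; on the other hand it genuinely uses primeness (to cancel $R$), whereas the paper's decomposition argument in fact shows the stronger statement that $(G\circ F)^{-1}(\CI)=\CI$ for \emph{every} thick tensor ideal $\CI$, not just primes. The two arguments are of course related --- the decomposition $R\cong\bigoplus_\lambda V_\lambda\otimes V_\lambda^*$ is exactly what links tensoring with $R$ to the isotypical projectors --- but the logical structures are different. Your final remark about structure sheaves goes a bit beyond what the lemma asserts and beyond what you have actually argued; the paper handles the ringed-space isomorphism separately, so you may want to drop that sentence or flag it as a side comment.
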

	\begin{proof}
		Let $\CP \in \Spec(\CD_{k[G]}) $ be a prime ideal. We want
		to prove that $(G \circ F)^{-1}(\CP) = \CP$. If $V \in
		\Mod(k[G])$ is any $k[G]$-module, then we have the canonical
		decomposition,
		\[
			V = \underset{\lambda}{\oplus} V_{\lambda} \otimes
			(V_{\lambda}^* \otimes V)^G
		\]
		where $V_{\lambda}$ is an irreducible representation of a group
$G$.
		Further $(V_{\lambda}^* \otimes V)^G$ is a direct summand of
		$(V_{\lambda}^* \otimes V)$ as is seen using the projector
		$\frac{1}{|G|} \sum_{g \in G} \rho_g$ where $\rho_g$ comes from
the
		action of a group $G$ on $(V_{\lambda}^* \otimes V)$. Since any
		complex in $\CD_{k[G]}$ is isomorphic to the direct sum of
		translates of the cohomology of that complex, to prove above
		assertion its enough to prove that $(G \circ F)^{-1}(\CP \cap
		\Mod(k[G])) = \CP \cap \Mod(k[G])$. Observe that,
		\beast
			V \in (\CP \cap \Mod(k[G])) & \Leftrightarrow &
(V_{\lambda}
				\otimes V)^G \in (\CP \cap \Mod(k[G])\\
			& & \text{ using thickness and additivity }\\
			&\Leftrightarrow & (V_{\lambda} \otimes V)^G \in (G
\circ
				F)^{-1}(\CP \cap \Mod(k[G]))\\
			& & \text{ Since } (G \circ F )(W) = W \text{ if } G
\text{ acts
				trivially on } W\\
			& \Leftrightarrow & V \in  (G \circ F)^{-1}(\CP \cap
				\Mod(k[G]))\\
			& & \text{ using thickness and additivity.}
		\eeast
		Therefore above observation completes the proof of lemma.
	\end{proof} 
	Hence using the above lemma we have another proof of the first
	isomorphism.

\subsubsection*{Case 2: $X$ smooth variety with a trivial $G$ action.}
	In this case, we shall extend the above example. Let $X$ be a smooth
	variety considered as a space with the trivial action of a finite group
	$G$.  Recall the definitions and some properties of a $G$-sheaves from
	the preliminary section \ref{basics}. Let $\coh(X)$ (resp. $\coh^G(X)$)
be
	the abelian category of all coherent sheaves (resp. coherent
$G$-sheaves)
	over $X$. We have  two functors $F$ and $G$ similar to the previous
	example defined as follows,
	\beast
		F : \coh^G(X)  \to  \coh(X)  & \& & G : \coh(X)  \to  \coh^G(X)
\\
		(\CF, \rho)  \mapsto  \CF ~~~~~~~~ &  & ~~~~~~~~~~~~ \CF
		\mapsto  (\CF, id)
	\eeast
	Note that the functor $F$ (respectively $G$) is a faithful (respectively
	fully faithful) exact functor. Thus we get two exact derived
	functors of the above two functors, $F : \CD^G(X) \to \CD^b(X)$ and $G :
	\CD^b(X) \to \CD^G(X)$ which by abuse of notation are denoted by the same
	symbols.

	Recall that $\CD^G(X)$ and $\CD^b(X)$ are a tensor triangulated
	categories which makes the functors $F$ and $G$ unital tensor
	functors and hence using the functorial property of ``Spec'' we shall
	get two morphisms $\Spec(F) : \Spec(\CD^b(X)) \to \Spec(\CD^G(X))$ and
	$\Spec(G) :  \Spec(\CD^G(X)) \to \Spec(\CD^b(X))$. Now we have
	following result,
	\begin{proposition}
		$\Spec(\CD^G(X)) \cong \Spec(\CD^b(X)) \cong X$.
	\end{proposition}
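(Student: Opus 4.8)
The plan is to follow the template of Case~1, but to replace the representation-theoretic reduction to cohomology sheaves used there (which relied on the semisimplicity of $\Rep(G)$ and is unavailable now, since $\coh^G(X)$ is far from semisimple) by a direct application of the canonical decomposition of Proposition~\ref{can_decomp} \emph{at the level of complexes}. The second isomorphism $\Spec(\CD^b(X)) \cong X$ is then immediate from Balmer's theorem quoted above: $X$ is smooth and quasi-projective, so every coherent sheaf admits a finite locally free resolution, hence $\CD^b(X) = \CD^{per}(X)$, and $X$ is certainly topologically noetherian.

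For the first isomorphism, recall that $F \circ G = \Id_{\CD^b(X)}$, so $\Spec(G) \circ \Spec(F) = \Id$, and it remains to prove $\Spec(F) \circ \Spec(G) = \Id$, i.e. $(G \circ F)^{-1}(\CP) = \CP$ for every prime ideal $\CP \subset \CD^G(X)$. Given $(\CF, \rho) \in \CD^G(X)$, I apply Proposition~\ref{can_decomp} with $H = G$ (legitimate, as $G$ acts trivially on $X$) to write $(\CF, \rho) = \bigoplus_{\lambda} V_{\lambda} \otimes (\CF, \rho)_{\lambda}$ with $(\CF, \rho)_{\lambda} = (V_{\lambda}^* \otimes (\CF, \rho))^G$ carrying the trivial $G$-action. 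The argument then rests on two observations. \emph{First}, since $\CP$ is a thick tensor ideal, $(\CF, \rho) \in \CP$ if and only if every $(\CF, \rho)_{\lambda} \in \CP$: the forward direction applies thickness to the displayed decomposition and then uses that $(\CF, \rho)_{\lambda}$ is a direct summand of $V_{\lambda}^* \otimes \big(V_{\lambda} \otimes (\CF, \rho)_{\lambda}\big)$, because the trivial representation is a direct summand of $V_{\lambda}^* \otimes V_{\lambda}$ — this is exactly where the hypothesis $|G|$ coprime to $\chr k$ is used, via semisimplicity of $\Rep(G)$ — while the converse is the tensor-ideal property together with additivity. \emph{Second}, since $(\CF, \rho)_{\lambda}$ has trivial $G$-action we have $(G \circ F)((\CF, \rho)_{\lambda}) = (\CF, \rho)_{\lambda}$, hence $(G \circ F)(\CF, \rho) = \bigoplus_{\lambda} (\CF, \rho)_{\lambda}^{\oplus \dim V_{\lambda}}$, which by additivity and thickness lies in $\CP$ if and only if each $(\CF, \rho)_{\lambda}$ does. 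Combining the two observations gives $(\CF, \rho) \in \CP \iff (\CF, \rho) \in (G \circ F)^{-1}(\CP)$, so $\Spc(F)$ and $\Spc(G)$ are mutually inverse homeomorphisms.

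To upgrade this to an isomorphism of locally ringed spaces, I would invoke the functoriality of Balmer's structure sheaf: $F$ and $G$ are unital tensor functors, each sending the unit to the unit, so each induces a map on the presheaves $U \mapsto \End(1_U)$ compatible with the localisations defining $\CO_{\CD}$, and since the underlying maps $\Spc(F)$, $\Spc(G)$ are inverse to one another the induced maps on structure sheaves are inverse isomorphisms. The main obstacle — the only step that is not formal — is the first observation: one must make precise that the twisted object $V_{\lambda} \otimes (\CF, \rho)_{\lambda}$ and the untwisted object $(\CF, \rho)_{\lambda}$ generate the same thick tensor ideal, which is precisely where semisimplicity of $\Rep(G)$ over $k$ is needed. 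Everything else is a transcription of the Case~1 argument, with Proposition~\ref{can_decomp} performing at the level of complexes the role that semisimplicity of the ambient category played there.
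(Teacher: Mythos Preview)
Your proof is correct and follows essentially the same route as the paper: both reduce to showing $(G\circ F)^{-1}(\CP)=\CP$ via the canonical decomposition of Proposition~\ref{can_decomp}, and both hinge on the equivalence $(\CF,\rho)\in\CP \iff (\CF,\rho)_\lambda\in\CP$ for every $\lambda$. The only cosmetic difference is that the paper extracts $(\CF,\rho)_\lambda$ as a direct summand of $V_\lambda^*\otimes(\CF,\rho)$ using the averaging projector $\frac{1}{|G|}\sum_g\rho_g$, whereas you tensor the summand $V_\lambda\otimes(\CF,\rho)_\lambda$ by $V_\lambda^*$ and use that the trivial representation splits off $V_\lambda^*\otimes V_\lambda$; these are equivalent uses of the same coprimality hypothesis, and your explicit remark on the structure-sheaf isomorphism simply spells out what the paper leaves implicit.
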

	\begin{proof}
		Here, the second isomorphism was proved by Balmer \cite{PB2}
which
		enables him to reconstruct the variety from its associated
tensor
		triangulated category of coherent sheaves. We shall use the idea
of
		previous example to prove the first isomorphism. 
		
		Since $F \circ G = Id$, functoriality of the ``Spec'' will give
		$\Spec(G) \circ \Spec(F) = \Id$. Now it remains to prove that
		$\Spec(F) \circ \Spec(G) = \Id$. Note that every object $(\CF,
\rho)
		\in \CD^G(X)$ has the canonical decomposition as follows,
		\[
			(\CF, \rho) = \underset{\lambda}{\oplus} V_{\lambda}
\otimes
			(\CF , \rho)_{\lambda}
		\]
		where $(\CF, \rho)_{\lambda} = (V^*_{\lambda} \otimes (\CF,
		\rho))^G$ and $V_{\lambda}$ is a finite dimensional irreducible
		representation of the group $G$, see section \ref{basics} for
proof.
		Also note that $(\CF ,\rho)_{\lambda}$ is an ordinary sheaf with
		the trivial action of a group $G$ and also using similar
projector
		as above, i.e.  $\frac{1}{|G|} \underset{g \in G}{\sum} \rho_g$,
we
		can prove that $(\CF ,\rho)_{\lambda}$ is an direct summand of
the
		sheaf $(V^*_{\lambda} \otimes (\CF , \rho) )$.  Now we use the
		following lemma.
		\begin{lemma}
			$\Spec(F) \circ \Spec(G) = \Id$.
		\end{lemma}
		\begin{proof}
			Let $\CP \in \Spec(\CD^G(X))$ be a prime ideal. We want
to prove
			that $(G \circ F)^{-1}(\CP) = \CP$. Now using the
canonical
			decomposition of each objects of the triangulated
category
			$\CD^G(X)$. we have,
			\beast
				(\CF , \rho) \in \CP  & \Leftrightarrow & (\CF ,
					\rho)_{\lambda} \in \CP  \text{ using
thickness,
					additivity and projector}\\
				& \Leftrightarrow & (\CF , \rho)_{\lambda} \in
(G \circ
					F)^{-1}(\CP)\\
				& & \text{Since } (G \circ F )(\CF, id) = (\CF,
id) \text{
					if } G \text{ acts trivially i.e. } \rho
= id\\
				& \Leftrightarrow & (\CF , \rho) \in  (G \circ
F)^{-1}(\CP )
					\\
				& & \text{ using thickness, additivity and
projector.} 
			\eeast
			Hence the above observation completes the proof of
lemma.
		\end{proof}
		Now, using the above lemma, it follows that $\Spec(F)$ is an
		isomorphism between $\Spec(\CD^G(X))$ and $\Spec(\CD^b(X))$.
	\end{proof}
\begin{remark}
\begin{enumerate}
\item In case 1 the second proof also works for fields with characteristic co-prime to
 order of the group $G$.
\item The proof for the case of trivial action on smooth varieties doesn't need assumption
of quasi-projectivity on the variety $X$ which is used later for the general case for
 the existence of group quotients.  
\end{enumerate}
\end{remark}

\subsubsection*{Case 3: $G$ acts freely on a smooth variety $X$}
	Now we shall consider the case where a finite group $G$ acts freely on
	$X$. We refer to section \ref{basics} for the definition. Recall that we
	have a canonical map $\pi : X \to Y := X / G$ which is a $G$-equivariant
	map with the trivial action of $G$ on $Y$. Now we can also define two
	functors associated with $\pi$: $\pi^*: \coh(Y) \to \coh^G(X)$ and
	$\pi^G_* : \coh^G(X) \to \coh(Y)$ where $\pi^G_* = G\text{-equivariant
	part of } \pi_*$. We had also seen in \ref{basics} that $\pi^*$ is a
	tensor functor in general; and when $G$ acts freely it is also an
	equivalence of categories with $\pi^G_*$ as its quasi-inverse. Hence we
	shall get an equivalence of the tensor triangulated categories $\CD^b(Y)$
	and $\CD^G(X)$. Since an equivalence gives an isomorphism of ``Spec'',
	(cf. \ref{basics}), therefore we get an isomorphism $\Spec(\pi^*):
	\Spec(\CD^G(X)) \to \Spec(\CD^b(Y))$ with its inverse given by
	$\Spec(\pi^G_*)$. In fact using case 2 and this argument,
	we can give slightly more general statement as follows.
	\begin{corollary}
		Suppose finite group $G$ acts freely on a quasi-projective
variety
		$X$ modulo some normal subgroup $H$. In other words, the
subgroup
		$H$ acts trivially, and the induced action of the quotient group
		$G/H$ is free. Then
		\[
			\Spec(\CD^G(X)) \cong \Spec(\CD^b(Y)) \cong Y
		\]
		where $Y := X / G$ as before.
	\end{corollary}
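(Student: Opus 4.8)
The plan is to interpose $\CD^{G/H}(X)$ between $\CD^G(X)$ and $\CD^b(Y)$. Regard the structure morphism $\varpi : X \to Y = X/G$ as $G$-equivariant with $G$ acting trivially on $Y$; this is legitimate because $H$ acts trivially on $X$ and the free $G/H$-action descends to the trivial action on $X/(G/H) = X/G = Y$. Since $G/H$ acts freely on $X$ with quotient $Y$, the argument of Case~3 shows that $\pi^{*} : \CD^{b}(Y) \to \CD^{G/H}(X)$ is a tensor equivalence, so $\Spec(\CD^{G/H}(X)) \cong \Spec(\CD^{b}(Y)) \cong Y$ as locally ringed spaces (the last isomorphism by Balmer's theorem, $Y$ being a smooth, hence topologically noetherian, variety). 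It therefore suffices to prove that the inflation functor $q^{*} : \CD^{G/H}(X) \to \CD^{G}(X)$ — which leaves a complex unchanged and pulls the equivariant structure back along $G \twoheadrightarrow G/H$, so that $H$ acts trivially — induces an isomorphism of locally ringed spaces $\Spec(q^{*}) : \Spec(\CD^{G}(X)) \to \Spec(\CD^{G/H}(X))$; note $\varpi^{*} = q^{*}\circ\pi^{*}$. The functor $q^{*}$ is exact, unital and tensor, hence dominant, so $\Spec(q^{*})$ is defined and continuous. We will also use the exact functor $p := (-)^{H} : \CD^{G}(X) \to \CD^{G/H}(X)$ of $H$-invariants — well defined since $H\trianglelefteq G$, and exact since $|G|$ (hence $|H|$) is coprime to $\chr k$ — which satisfies $p\circ q^{*}\cong \Id$ and, crucially, the projection formula $p(A\otimes q^{*}\CF) \cong p(A)\otimes\CF$ for all $A\in\CD^{G}(X)$, $\CF\in\CD^{G/H}(X)$ (because $q^{*}\CF$ carries the trivial $H$-action, so the averaging idempotent $\tfrac{1}{|H|}\sum_{h\in H}\rho_{h}$ commutes with $-\otimes q^{*}\CF$).

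\emph{Injectivity and closedness of $\Spec(q^{*})$.} This runs parallel to the key lemma in Case~2. Given $(\CF,\rho)\in\CD^{G}(X)$, Proposition~\ref{can_decomp} applied to the trivially-acting subgroup $H$ splits it into finitely many $H$-isotypic summands, the $\lambda$-summand being additively a finite direct sum of copies of an object $(\CF,\rho)_{\lambda}$ on which $H$ acts trivially; thus $(\CF,\rho)_{\lambda}\cong q^{*}\CG_{\lambda}$ for a (unique up to isomorphism) $\CG_{\lambda}\in\CD^{G/H}(X)$. For any prime ideal $\CP\subset\CD^{G}(X)$, thickness and additivity give $(\CF,\rho)\in\CP \iff (\CF,\rho)_{\lambda}\in\CP \text{ for all }\lambda \iff \CG_{\lambda}\in(q^{*})^{-1}(\CP) \text{ for all }\lambda$. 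Since the $\CG_{\lambda}$ depend only on $(\CF,\rho)$, the set $(q^{*})^{-1}(\CP)$ determines $\CP$, so $\Spec(q^{*})$ is injective; and the same computation shows $\Spec(q^{*})(\supp(\CF,\rho)) = \bigcup_{\lambda}\supp(\CG_{\lambda})$, a finite union of closed sets, so $\Spec(q^{*})$ is a closed map.

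\emph{Surjectivity of $\Spec(q^{*})$.} Fix a prime $\CQ\subset\CD^{G/H}(X)$, put $\CI := \langle q^{*}(\CQ)\rangle\subset\CD^{G}(X)$, and let $\CS$ be the family of all objects isomorphic to $q^{*}\CH$ with $\CH\notin\CQ$. Since $q^{*}$ is a unital tensor functor and $\CQ$ is prime, $\CS$ is a tensor multiplicative family. Moreover $\CS\cap\CI=\varnothing$: writing $\CI=\bigcup_{n}\langle q^{*}(\CQ)\rangle^{n}$ by Lemma~\ref{ideal}, an induction on $n$ — using that $p$ is exact, that $p(A\otimes q^{*}\CF)\cong p(A)\otimes\CF\in\CQ$ whenever $\CF\in\CQ$ (tensor ideal), and thickness of $\CQ$ — shows $p(\CI)\subseteq\CQ$; but every object of $\CS$ maps under $p$ to some $\CH\notin\CQ$ (as $p\circ q^{*}\cong\Id$), hence cannot lie in $\CI$. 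By Lemma~\ref{zorn} there is a prime $\CP\in\Spc(\CD^{G}(X))$ with $\CI\subseteq\CP$ and $\CP\cap\CS=\varnothing$; the inclusion $\CI\subseteq\CP$ gives $\CQ\subseteq(q^{*})^{-1}(\CP)$, while $\CP\cap\CS=\varnothing$ gives $(q^{*})^{-1}(\CP)\subseteq\CQ$, so $(q^{*})^{-1}(\CP)=\CQ$. Thus $\Spec(q^{*})$ is a continuous closed bijection, i.e.\ a homeomorphism. Finally, $q^{*}$ being a unital tensor functor makes $\Spec(q^{*})$ a morphism of ringed spaces, and on structure sheaves it induces, compatibly with all the localizations, the identity on the endomorphism ring of the unit — these rings being the $G$- (respectively $G/H$-) invariant functions, which coincide because $H$ acts trivially on $\CO_{X}$ — whence an isomorphism of structure sheaves. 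Composing with $\Spec(\pi^{*})$ and Balmer's $\Spec(\CD^{b}(Y))\cong Y$ completes the proof.

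The main obstacle is the disjointness $\CS\cap\CI=\varnothing$ in the surjectivity step: $p=(-)^{H}$ is \emph{not} a tensor functor (tensor product does not commute with $H$-invariants), so one cannot simply feed it into the functoriality of $\Spec$; what rescues the argument is the projection formula $p(A\otimes q^{*}\CF)\cong p(A)\otimes\CF$ for an $H$-trivial second factor, which lets the induction over the stages $\langle q^{*}(\CQ)\rangle^{n}$ of the generated thick tensor ideal go through. Everything else is a routine transcription of Cases~2 and~3.
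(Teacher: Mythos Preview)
Your proof is correct and follows the paper's outline: factor through $\CD^{G/H}(X)$, invoke Case~3 for the free $G/H$-action, and use the canonical decomposition with respect to the trivially-acting normal subgroup $H$ (Proposition~\ref{can_decomp}) to compare $\Spec(\CD^G(X))$ with $\Spec(\CD^{G/H}(X))$, exactly as the paper's one-line sketch (``goes in similar lines as case~2'') intends. Where you go beyond the sketch is in the surjectivity of $\Spec(q^*)$: in Case~2 the paper gets this for free from $F\circ G=\Id$ with \emph{both} functors tensor, but here the would-be left inverse $p=(-)^H$ of $q^*$ is not a tensor functor, so that shortcut is unavailable; you correctly identify this and instead run the projection formula $p(A\otimes q^*\CF)\cong p(A)\otimes\CF$ together with Lemmas~\ref{ideal} and~\ref{zorn} --- precisely the device the paper itself deploys later in Step~1 of the general Case~4. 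One organizational quibble: your closedness identity $\Spec(q^*)(\supp(\CF,\rho))=\bigcup_\lambda\supp(\CG_\lambda)$ needs surjectivity for the inclusion $\supseteq$, so logically it should come after the surjectivity paragraph (as indeed it does in the paper's Case~4, Step~3).
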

	\begin{proof}
		As mentioned above, the proof goes in similar lines as in case
2,
		using a more general canonical decomposition of objects of
		$\CD^G(X)$: 
		\[
			(\CF , \rho) = \underset{\lambda}{\oplus} W_{\lambda}
\otimes
			(\CF, \rho)_{\lambda}
		\]
		where $(\CF, \rho)_{\lambda} = (W^*_{\lambda} \otimes (\CF ,
		\rho))^H$, $W_{\lambda}$ is a finite dimensional irreducible
		representation of the group $H$, and the group $G/H$ acts
naturally
		on $(\CF , \rho)_{\lambda}$. See section \ref{basics} for the
proof.
	\end{proof}
	
	Finally we tackle the general case. Since the proof is a bit long
	involving some steps we devote a full subsection to it.

\subsection{Case 4: The general case}
	Finally in this case we shall consider the more general situation of a
	finite group $G$ acting on a smooth quasi-projective variety $X$ and we
further
	assume that the group $G$ acts faithfully. Define $\pi : X \to Y := X /
	G$ as above an $G$-equivariant map when considered with the trivial
	action of a group $G$ on $Y$. Note that for a finite group the quotient
	space always exists \ref{basics}. We have a following main result,
	\begin{proposition}
		$\Spec(\CD^G(X)) \cong \Spec(\CD^{per}(Y) \cong Y$.
	\end{proposition}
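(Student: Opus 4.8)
The functor $\pi^{*}\colon\CD^{per}(Y)\to\CD^{G}(X)$ is unital, since $\pi^{*}\CO_{Y}=\CO_{X}$ is the $\otimes$-unit of $\CD^{G}(X)$; it is therefore dominant, so $\Spec(\pi^{*})\colon\Spec(\CD^{G}(X))\to\Spec(\CD^{per}(Y))$ is a well-defined morphism of ringed spaces, and as $Y$ is a Noetherian scheme (the quotient map $\pi$ being finite) Balmer's theorem identifies $\Spec(\CD^{per}(Y))\cong Y$. Thus the proposition comes down to showing that $\Spec(\pi^{*})$ is an isomorphism of locally ringed spaces. I will use that $X$ smooth implies $\CD^{G}(X)=\CD^{b}(\coh^{G}(X))$ is a category of $G$-equivariant perfect complexes on which the homological support $\supph$ is a well-behaved support datum valued in $G$-invariant closed subsets, with $\supph(\CF\otimes\CF')=\supph\CF\cap\supph\CF'$ (Nakayama applied to the top nonvanishing local homology). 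The plan is to treat the underlying spaces first and the structure sheaves afterwards.

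\emph{The underlying spaces.}
Set $\sigma(\CF):=\pi(\supph\CF)$, a closed subset of $Y$ ($\pi$ being finite) satisfying $\pi^{-1}(\sigma\CF)=\supph\CF$ because $\supph\CF$ is $G$-invariant; consequently $\sigma(\CF\otimes\CF')=\sigma\CF\cap\sigma\CF'$, using that $\pi(A\cap B)=\pi(A)\cap\pi(B)$ for $G$-invariant $A,B$. So $(Y,\sigma)$ is a support datum on $\CD^{G}(X)$, with continuous classifying map $c\colon Y\to\Spc(\CD^{G}(X))$; one checks that the composite $Y\xrightarrow{c}\Spc(\CD^{G}(X))\xrightarrow{\Spc(\pi^{*})}\Spc(\CD^{per}(Y))$ is exactly Balmer's homeomorphism, since $\Spc(\pi^{*})^{-1}(\supp\CK)=\supp(\pi^{*}\CK)$ and $\supph(\pi^{*}\CK)=\pi^{-1}(\supph\CK)$. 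Hence it is enough to prove that $c$ is a homeomorphism, and by Balmer's classification criterion this follows once we show that $Z\mapsto\CD^{G}_{\pi^{-1}(Z)}(X):=\{\CF:\sigma\CF\subseteq Z\}$ is a bijection from the Thomason (= specialization-closed, $Y$ being Noetherian) subsets of $Y$ onto the radical thick $\otimes$-ideals of $\CD^{G}(X)$. Injectivity is immediate: for $y\in Y$ the canonically linearized structure sheaf of the $G$-invariant closed subscheme $\pi^{-1}(\overline{\{y\}})$ has $\sigma=\overline{\{y\}}$, which distinguishes distinct specialization-closed subsets. Surjectivity reduces, via Noetherianity of supports, to the \textbf{Key Lemma}: \emph{if $\CF,\CG\in\CD^{G}(X)$ with $\supph\CG\subseteq\supph\CF$, then $\CG\in\langle\CF\rangle$.}

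\emph{The Key Lemma: the crux.}
My preferred argument uses the induction functor. Because $|G|$ is invertible, $\CF$ is a direct summand in $\CD^{G}(X)$ of $\mathrm{Ind}(F\CF):=\bigoplus_{g\in G}g^{*}(F\CF)$ carrying its permutation linearization (split by $\tfrac{1}{|G|}\sum_{g}\rho_{g}^{-1}$), and similarly $\CG$ is a summand of $\mathrm{Ind}(F\CG)$. Since $\supph(F\CG)=\supph\CG\subseteq\supph\CF=\supph(F\CF)$ and $X$ is smooth, so that $\CD^{per}(X)=\CD^{b}(X)$, Thomason's classification of thick $\otimes$-ideals on $X$ gives $F\CG\in\langle F\CF\rangle$. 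The functor $\mathrm{Ind}$ is exact and satisfies the projection formula $\CF\otimes\mathrm{Ind}(\CE)\cong\mathrm{Ind}(F\CF\otimes\CE)$; together with the inductive description of $\langle\,\cdot\,\rangle$ in Lemma~\ref{ideal} this shows $\mathrm{Ind}$ sends $\langle F\CF\rangle$ into $\langle\CF\rangle$, whence $\mathrm{Ind}(F\CG)\in\langle\CF\rangle$ and, by thickness, $\CG\in\langle\CF\rangle$. Alternatively, staying within the tools of Section~\ref{basics}, one can argue by Noetherian induction on $\supph\CG$: if $G$ acts faithfully on $\supph\CG$, Proposition~\ref{supp red}(2) produces a triangle splitting off a $\pi^{*}$-pulled-back piece (handled by Thomason's classification on $Y$ transported along $\pi^{*}$) and leaving an object of strictly smaller support; if it does not, let $H\le G$ be the subgroup acting trivially on $\supph\CG$, note that $\supph\CG\subseteq X^{H}$ with $X^{H}$ a smooth $G$-invariant subvariety on which $G/H$ acts faithfully, use d\'evissage along the ideal of $X^{H}$ and the canonical decomposition of Proposition~\ref{can_decomp} to reduce to $\CD^{G/H}(X^{H})$, and close the induction using Proposition~\ref{orbit decomp} and Cases~2 and~3 already proved. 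I expect the principal obstacle in this second route to be the bookkeeping of the iterated passage to fixed loci $X\supseteq X^{H}\supseteq\cdots$ until the action on the relevant stratum is faithful, and of which thick $\otimes$-ideal the successive pulled-back pieces generate; the induction-functor route avoids this at the price of citing Thomason's theorem on $X$.

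\emph{The structure sheaves.}
Given a basic open $V\subseteq Y$ with closed complement $W$, the thick $\otimes$-ideal supported on the corresponding closed subset of $\Spc(\CD^{G}(X))$ is $\CD^{G}_{\pi^{-1}(W)}(X)$, and by the generalized localization theorem of Neeman in its equivariant form (a finite group acting on the $G$-invariant open $X\setminus\pi^{-1}(W)$) the Verdier quotient $\CD^{G}(X)/\CD^{G}_{\pi^{-1}(W)}(X)$ is, up to direct summands, $\CD^{G}(X\setminus\pi^{-1}(W))$. Therefore $\End(1_{V})=\Gamma\!\left(X\setminus\pi^{-1}(W),\ \CO_{X}\right)^{G}=\Gamma\!\left(V,\ (\pi_{*}\CO_{X})^{G}\right)=\CO_{Y}(V)$, the last equality being the defining property $\CO_{Y}=(\pi_{*}\CO_{X})^{G}$ of the quotient. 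This identification is natural in $V$, coincides with the map on structure sheaves induced by $\Spec(\pi^{*})$ (namely $\pi^{\sharp}$, an isomorphism), and has local stalks $\CO_{Y,y}$; sheafifying, $\Spec(\pi^{*})$ is an isomorphism of locally ringed spaces. Composing with Balmer's $\Spec(\CD^{per}(Y))\cong Y$ yields $\Spec(\CD^{G}(X))\cong\Spec(\CD^{per}(Y))\cong Y$.
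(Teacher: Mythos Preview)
Your proof is correct and takes a genuinely different route from the paper's. The paper argues directly with $\Spec(\pi^*)$: surjectivity by a Zorn-type argument (Lemma~\ref{zorn}) using $\pi_*\langle\pi^*\FQ\rangle\subseteq\FQ$ and the projection formula; injectivity via the elaborate tower of distinguished triangles in Lemma~\ref{tower}, built from Propositions~\ref{orbit decomp}, \ref{can_decomp} and \ref{supp red}, which successively strips off pieces on strata where some quotient of $G$ acts freely; closedness by reading $\Spec(\pi^*)(\supp a)$ off the same tower; and finally the sheaf isomorphism by a bare-hands computation in the Verdier quotients using the natural transformations $\eta,\mu$ of Lemma~\ref{adjunction}. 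You instead package $\sigma=\pi\circ\supph$ as a support datum on $\CD^G(X)$ valued in $Y$ and verify Balmer's classifying criterion; the crux becomes your Key Lemma, which you settle cleanly via the induction functor, the projection formula $\CG\otimes\mathrm{Ind}(\CE)\cong\mathrm{Ind}(F\CG\otimes\CE)$, and Thomason's classification on the non-equivariant smooth $X$, thereby sidestepping the stratification bookkeeping entirely (your sketched second route is essentially the paper's Step~2, and your diagnosis of its bookkeeping burden is accurate). For the structure sheaves you invoke an equivariant Neeman localization to identify the relevant Verdier quotient with $\CD^G(\pi^{-1}V)$ and read off $\End(\CO)=\Gamma(\pi^{-1}V,\CO_X)^G=\CO_Y(V)$. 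Your approach is shorter and more conceptual, at the price of two external inputs---Thomason's theorem on $X$ and the equivariant localization sequence---that the paper's self-contained method avoids; conversely, the paper's tower yields explicit structural information about equivariant complexes that your argument neither needs nor produces.
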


	Here again as before the second isomorphism is a particular case of the
	more general reconstruction result of Balmer \cite{PB1} \cite{PB2}.
	Hence we shall just prove the first isomorphism. We know there are two
	exact functors $\pi^* : \CD^{per}(Y) \to \CD^G(X)$ and $\pi_* : \CD^G(X) \to
	\CD^{per}(Y)$. We also know that the map $\pi^*$ is an unital tensor functor
	and hence it will give the map $\Spec(\pi^*): \Spec(\CD^G(X)) \to
	\Spec(\CD^{per}(Y))$. Note that $\pi_*$ need not be a tensor functor.
	We shall prove that $\Spec(\pi^*)$ is a closed bijection and induces an
	isomorphism for the structure sheaves. To simplify the proof we will
	break it in several steps.

\subsubsection*{Step 1: $\Spec(\pi^*)$ is onto.}
	Suppose $\FQ \in \Spec(\CD^{per}(Y))$ is a prime ideal then we want to
	construct an prime ideal $\FP$ in $\Spec(\CD^G(X))$ such that $\FQ =
	(\pi^*)^{-1}(\FP)$. Recall that $\lan \pi^*(\FQ) \ran$ denotes the thick
	tensor ideal generated by the image of $\FQ$ via functor $\pi^*$ in a
	tensor triangulated category $\CD^G(X)$. We have a following lemma which
	uses the explicit description of thick tensor ideal $\lan \pi^*(\FQ)
	\ran$.
	\begin{lemma}
		$\pi_*(\lan \pi^*(\FQ) \ran) \subseteq \FQ$.
	\end{lemma}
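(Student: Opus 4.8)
The plan is to use the explicit description of the thick tensor ideal $\lan\CE\ran = \cup_{n\geq 0}\lan\CE\ran^n$ from Lemma~\ref{ideal}, and to show by induction on $n$ that $\pi_*(\lan\pi^*(\FQ)\ran^n)\subseteq\FQ$. Since $\FQ$ is a prime ideal of $\CD^{per}(Y)$ it is in particular a thick tensor ideal, so it is closed under shifts, direct summands, extensions (cones) and tensoring with arbitrary objects of $\CD^{per}(Y)$; these are exactly the closure properties we will repeatedly invoke. The base case $n=0$ handles $\lan\pi^*(\FQ)\ran^0 = smd(ideal(\pi^*(\FQ)))$, and the inductive step handles the operation $\CA\diamond\CB = smd(\CA\star\CB)$.

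For the base case, a typical object of $ideal(\pi^*(\FQ))$ has the form $a\otimes\pi^*(x)$ with $a\in\CD^G(X)$ and $x\in\FQ$. The key computational input is the projection formula for the affine (finite) morphism $\pi$: there is a natural isomorphism $\pi_*(a\otimes\pi^*(x))\simeq\pi_*(a)\otimes x$ in $\CD^{per}(Y)$ (here $\pi_*$ on the left is the equivariant pushforward, and one uses that $\pi$ is perfect and that $|G|$ is invertible so that pushforward of equivariant complexes lands in $\CD^{per}(Y)$). Since $x\in\FQ$ and $\FQ$ is a tensor ideal, $\pi_*(a)\otimes x\in\FQ$, hence $\pi_*(a\otimes\pi^*(x))\in\FQ$. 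Because $\pi_*$ is exact and additive it commutes with shifts and finite direct sums, so $\pi_*$ sends all of $ideal(\pi^*(\FQ))$ into $\FQ$; and since $\pi_*$ of a direct summand of $Z$ is a direct summand of $\pi_*(Z)$ (additivity of $\pi_*$) and $\FQ$ is thick, we get $\pi_*(\lan\pi^*(\FQ)\ran^0)\subseteq\FQ$.

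For the inductive step, suppose $z\in\lan\pi^*(\FQ)\ran^{n}=smd\bigl(\lan\pi^*(\FQ)\ran^{n-1}\star\lan\pi^*(\FQ)\ran^{0}\bigr)$. Then $z$ is a direct summand of some $w$ fitting in a distinguished triangle $a\to w\to b\to a[1]$ with $a\in\lan\pi^*(\FQ)\ran^{n-1}$ and $b\in\lan\pi^*(\FQ)\ran^{0}$. Applying the exact functor $\pi_*$ yields a distinguished triangle $\pi_*(a)\to\pi_*(w)\to\pi_*(b)\to\pi_*(a)[1]$ in $\CD^{per}(Y)$; by the inductive hypothesis $\pi_*(a)\in\FQ$ and by the base case $\pi_*(b)\in\FQ$, so triangulatedness (closure under cones) of $\FQ$ gives $\pi_*(w)\in\FQ$, and then thickness gives $\pi_*(z)\in\FQ$. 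Taking the union over $n$ and invoking Lemma~\ref{ideal} finishes the proof.

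The main obstacle is the base case, specifically justifying that $\pi_*$ of an equivariant perfect (bounded) complex on $X$ really lands in $\CD^{per}(Y)$ and that the projection formula $\pi_*(a\otimes\pi^*x)\simeq\pi_*(a)\otimes x$ holds at the level of derived categories in the equivariant setting. This is where the hypotheses that $X$ is smooth quasi-projective, $\pi$ is finite (hence affine and, by Mumford's theorem, perfect), and $|G|$ is coprime to the characteristic all get used: exactness of taking $G$-invariants and finiteness of $\pi$ make $\pi_*$ behave as well as in the non-equivariant case, and the projection formula follows by reducing to sheaves via d\'evissage and then to the affine local situation, exactly as in Proposition~\ref{supp red}. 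Everything else — the inductive bookkeeping over $\diamond$ and $\star$ — is routine once the closure properties of the prime ideal $\FQ$ are in hand.
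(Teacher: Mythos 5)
Your proposal is correct and follows essentially the same route as the paper: invoke Lemma~\ref{ideal} to write $\lan\pi^*(\FQ)\ran=\bigcup_n\lan\pi^*(\FQ)\ran^n$, handle $n=0$ via the projection formula $\pi_*(\pi^*(\CF)\otimes\CG)\simeq\CF\otimes\pi_*(\CG)$ together with thickness of $\FQ$, and handle the inductive step via exactness of $\pi_*$ plus closure of $\FQ$ under cones and direct summands. You supply a bit more detail than the paper (spelling out the $\diamond$/$\star$ bookkeeping and flagging where finiteness of $\pi$ and invertibility of $|G|$ are used), but the argument is the same.
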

	\begin{proof}
		To prove this lemma, we use lemma\ref{ideal} i.e.
		\[
			\lan \pi^*(\FQ) \ran = \cup_{n \geq 0} \lan \pi^*(\FQ)
\ran^n
		\]
		where $\lan \pi^*(\FQ) \ran^n$ constructed inductively by taking
		$\lan \pi^*(\FQ) \ran^0$ as the summands of tensor ideal
generated by
		$\pi^*(\FQ)$ and $\lan \pi^*(\FQ) \ran^n$ to be the thick tensor
ideal
		containing cone of morphism between any two objects of $\lan
\pi^*(\FQ)
		\ran^{(n-1)}$ and$\lan \pi^*(\FQ)
		\ran^0$ . Here cone of a morphism refers to the third object of
any
			distinguised triangle having this morphism as a base
                or equivalently we can use $\diamond$ operation. The above
equality follows from the lemma\ref{ideal} proved
		earlier.

		We shall use induction on $n$ in the above explicit description.
For
		$n = 0$, given $\CF \in \FQ$,
		\[
			\pi_*(\pi^*(\CF)\otimes \CG) = \CF \otimes \pi_*(\CG)
\in \FQ
			\text{,}
		\]
		and hence $\pi_*(\lan \pi^*(\FQ) \ran^0) \subseteq \FQ$ using
thickness of $\FQ$. 
		
		Using induction suppose we know that $\pi_*(\lan \pi^*(\FQ)
		\ran^{(n-1)}) \subseteq \FQ$. Since $\pi_*$ is an exact functor,
it
		follows that the image under $\pi_*$ of a cone of any morphism
is
		a cone of $\pi_*$ of the morphism. Hence using the
		triangulated ideal property and thickness of $\FQ$ it follows
that $\pi_*(\lan
		\pi^*(\FQ) \ran^n) \subseteq \FQ$. Therefore we have 
$\pi_*(\lan
		\pi^*(\FQ) \ran) = \pi_*(\cup_{n \geq 0} \lan \pi^*(\FQ) \ran^n)
		\subseteq \FQ$.
	\end{proof}

	\begin{lemma}
		$\pi^*(\CD^{per}(Y) \setminus \FQ) \cap \lan \pi^*(\FQ) \ran =
		\varnothing$.
	\end{lemma}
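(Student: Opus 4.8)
The plan is to prove the contrapositive-style statement that the tensor multiplicative family $\pi^*(\CD^{per}(Y) \setminus \FQ)$ is disjoint from the thick tensor ideal $\lan \pi^*(\FQ) \ran$, and then feed this into Lemma~\ref{zorn} to produce the prime $\FP$ lifting $\FQ$. First I would observe that $\CS := \pi^*(\CD^{per}(Y) \setminus \FQ)$ really is a tensor multiplicative family: it contains $\pi^*(1_Y) = 1_X$ since $1_Y \notin \FQ$ (as $\FQ$ is proper), and it is closed under $\otimes$ because $\pi^*$ is a tensor functor and $\FQ$ is prime, so $a \otimes b \notin \FQ$ whenever $a, b \notin \FQ$. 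Thus once disjointness is established, Lemma~\ref{zorn} applied to $\CI = \lan \pi^*(\FQ)\ran$ and this $\CS$ yields a prime $\FP \supseteq \lan \pi^*(\FQ)\ran$ with $\FP \cap \CS = \varnothing$; the latter condition says precisely $(\pi^*)^{-1}(\FP) \subseteq \FQ$, while $\lan\pi^*(\FQ)\ran \subseteq \FP$ gives the reverse inclusion $\FQ \subseteq (\pi^*)^{-1}(\FP)$, finishing Step~1.

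For the disjointness itself, I would argue by contradiction. Suppose $\pi^*(\CG) \in \lan \pi^*(\FQ) \ran$ for some $\CG \in \CD^{per}(Y) \setminus \FQ$. Applying the previous lemma, $\pi_*(\pi^*(\CG)) \in \FQ$. Now the key computational input is the projection formula together with the fact that the unit $1_Y = \CO_Y$ is a direct summand of $\pi_*\pi^*(\CO_Y) = \pi_*(\CO_X) = \pi_*^G(\CO_X) \oplus (\text{complement})$ — this is the analogue, at the level of the quotient map, of the splitting used throughout Section~\ref{basics}, and it holds because $|G|$ is invertible in $k$ so the averaging projector $\frac{1}{|G|}\sum_g \rho_g$ splits off the $G$-invariants. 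Tensoring this splitting with $\CG$ and using the projection formula $\pi_*(\pi^*\CG \otimes \CO_X) \cong \CG \otimes \pi_*(\CO_X)$, we see that $\CG$ is a direct summand of $\pi_*\pi^*(\CG)$. Since $\FQ$ is thick and $\pi_*\pi^*(\CG) \in \FQ$, we conclude $\CG \in \FQ$, contradicting the choice of $\CG$.

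The step I expect to be the main obstacle is pinning down the statement ``$\CO_Y$ is a direct summand of $\pi_*\CO_X$ in $\CD^{per}(Y)$'' rigorously and checking that the projection formula applies in the perfect-complex setting here. One must verify that $\pi$ is a sufficiently nice (finite, hence perfect and affine) morphism so that $R\pi_*$ is defined on $\CD^{per}$, that $R\pi_*\CO_X$ is itself a perfect complex on $Y$ (finiteness plus the fact that $X$ is smooth), and that the $G$-equivariant splitting $\pi_*\CO_X \cong \pi_*^G\CO_X \oplus \CK$ descends to an honest direct-sum decomposition in $\CD^{per}(Y)$ with $\pi_*^G\CO_X = \CO_Y$; all of this rests on $\mathrm{char}(k) \nmid |G|$. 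Once that structural fact is in hand, the rest is a short diagram chase using thickness and the earlier lemma, so I would present the splitting as the technical heart and treat the deduction of $\CG \in \FQ$ as routine.
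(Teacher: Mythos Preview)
Your proof is correct and follows essentially the same route as the paper: argue by contradiction, invoke the previous lemma to get $\pi_*\pi^*\CG \in \FQ$, apply the projection formula $\pi_*\pi^*\CG \cong \CG \otimes \pi_*\CO_X$, and use the splitting $\CO_Y \mid \pi_*\CO_X$ coming from the averaging projector. The only cosmetic difference is in the last step: the paper uses primality of $\FQ$ to conclude $\pi_*\CO_X \in \FQ$ and then thickness to force $\CO_Y \in \FQ$ (absurd), whereas you tensor the splitting with $\CG$ and use thickness alone to force $\CG \in \FQ$; both deductions are equally valid, and your version has the mild advantage of not needing primality at this point.
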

	\begin{proof}
		To prove this by contradiction, suppose that there exists an
object
		$\CG \in (\CD^{per}(Y) \setminus \FQ)$ such that $\pi^*(\CG) \in \lan
		\pi^*(\FQ) \ran$. Then using the above lemma $\pi_*(\pi^*\CG)
\in
		\FQ$. On the other hand, the projection formula implies
		$\pi_*(\pi^*\CG) = \CG \otimes \pi_*(\CO_X)$, which we saw is in
		$\FQ$.
		
		Using the primality of $\FQ$ it follows that $ \pi_*(\CO_X) \in
		\FQ$.  Now $(\pi_*(\CO_X))^G = \CO_Y$ is a direct summand of
		$\pi_*(\CO_X)$ by the canonical decomposition of a $G$-sheaves
on
		$Y$.  Hence $\CO_Y$ is an object of $\FQ$; which is absurd.
	\end{proof}

	To complete Step 1, we apply Balmer's result \ref{zorn} to get an prime
	ideal $\FP$, such that $\pi^*(\CD^{per}(Y) \setminus \FQ) \cap \FP =
	\varnothing$ and $\lan \pi^*(\FQ) \ran \subseteq \FP$. Hence we shall
get $\FQ = (\pi^*)^{-1}(\FP)$ which proves
	the surjectivity of the map $\Spec(\pi^*)$.

\subsubsection*{Step 2: Injectivity of $\Spec(\pi^*)$} 
	First we shall give  proof for the case of a \emph{smooth projective
	curve} as it is simpler than the general case. We have a following basic
	result for the case of a smooth projective curve which we shall use in
	the proof.
	\begin{proposition} \label{curve}
		\begin{enumerate}
			\item
				Any object of $\CD^b(\CA)$,for a hereditary
abelian category
				$\CA$, is noncanonically isomorphic to the
direct sum of its
				cohomologies with shifts. In particular, this is
true for
				$\CA = \coh(X)$ where $X$ is a smooth projective
curve.
			\item
				Every coherent sheaf over smooth projective
curve $X$ is a
				direct sum of a coherent skyscraper sheaves and
a locally
				free coherent sheaves. 
		\end{enumerate}
	\end{proposition}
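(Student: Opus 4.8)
The plan for part (1) is a d\'evissage on the amplitude of the complex, resting on the single input that a hereditary abelian category satisfies $\operatorname{Ext}^i_{\CA}(-,-) = 0$ for $i \geq 2$. Let $C \in \CD^b(\CA)$ have cohomology concentrated in degrees $[a,b]$; I would induct on $b-a$, the case of a single nonzero cohomology object being trivial. For the inductive step, consider the canonical truncation triangle
\[
\tau^{\leq b-1}C \to C \to H^b(C)[-b] \xrightarrow{\ w\ } (\tau^{\leq b-1}C)[1].
\]
By the standard fact that a distinguished triangle with zero connecting morphism splits, it suffices to show $w = 0$; then $C \cong \tau^{\leq b-1}C \oplus H^b(C)[-b]$ and induction (applied to $\tau^{\leq b-1}C$) finishes. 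Now $w$ lies in $\operatorname{Hom}_{\CD}(H^b(C)[-b],(\tau^{\leq b-1}C)[1]) \cong \operatorname{Hom}_{\CD}(H^b(C),(\tau^{\leq b-1}C)[b+1])$, and one checks directly that the target complex $N := (\tau^{\leq b-1}C)[b+1]$ has cohomology concentrated in degrees $\leq -2$.

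To see that $\operatorname{Hom}_{\CD}(H^b(C), N) = 0$, I would run a second (finite) d\'evissage, this time on $N$: filter $N$ by its own truncations $\tau^{\leq m}N$, and use the long exact $\operatorname{Hom}$-sequences attached to the triangles $\tau^{\leq m-1}N \to \tau^{\leq m}N \to H^m(N)[-m]$. This reduces the vanishing to that of the groups $\operatorname{Hom}_{\CD}(H^b(C), H^m(N)[-m]) = \operatorname{Ext}^{-m}_{\CA}(H^b(C), H^m(N))$ for $m \leq -2$, i.e. to $\operatorname{Ext}^{\geq 2}_{\CA}(-,-) = 0$, which holds by hereditariness. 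Hence $w = 0$, completing the induction and proving the first assertion. The special case $\CA = \coh(X)$ for $X$ a smooth projective curve is then immediate, since such a $\coh(X)$ is hereditary ($X$ being regular of dimension $1$).

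For part (2), let $\CF$ be coherent on the smooth projective curve $X$ and let $T \subseteq \CF$ be its torsion subsheaf. Since $X$ is a regular integral curve, $T$ is supported at finitely many closed points and so is a finite-length skyscraper sheaf, while $E := \CF/T$ is torsion-free, hence locally free (finitely generated torsion-free modules over a DVR are free). It remains to split $0 \to T \to \CF \to E \to 0$, i.e. to show $\operatorname{Ext}^1_X(E,T) = 0$. By the local-to-global spectral sequence, $\operatorname{Ext}^1_X(E,T)$ is caught between $H^1(X, \mathcal{H}om(E,T))$ and $H^0(X, \mathcal{E}xt^1(E,T))$; the latter vanishes because $E$ is locally free, and the former equals $H^1(X, E^{\vee} \otimes T)$, which vanishes since $E^{\vee} \otimes T$ is supported on a zero-dimensional closed subscheme. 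Thus the sequence splits and $\CF \cong T \oplus E$.

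The main obstacle is the vanishing of the connecting morphism $w$ in part (1): the whole argument turns on systematically converting $\operatorname{Hom}$-groups between complexes into $\operatorname{Ext}$-groups in $\CA$ via truncation and then applying $\operatorname{Ext}^{\geq 2}_{\CA} = 0$. Once that bookkeeping is done cleanly, both statements are routine, and part (2) is essentially a cohomological vanishing on points.
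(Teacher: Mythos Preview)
Your proof is correct. Note, however, that the paper does not actually prove this proposition: it is quoted there as a ``basic result'' for smooth projective curves and used without argument. What you have written is the standard proof of both parts. In part (1) your d\'evissage on amplitude, combined with the vanishing of the connecting morphism $w$ via $\operatorname{Ext}^{\geq 2}_{\CA}=0$, is exactly the usual route; the inner d\'evissage on the target $N$ is fine since $C$ (hence $N$) is bounded, so the filtration by truncations is finite and the long exact $\operatorname{Hom}$-sequences reduce the question to $\operatorname{Ext}^{-m}_{\CA}$ with $-m\geq 2$. In part (2) the local-to-global spectral sequence is slight overkill once you know $E$ is locally free (then $\operatorname{Ext}^1_X(E,T)=H^1(X,E^{\vee}\otimes T)$ directly), but the conclusion is correct.
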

	Using above result we prove the following proposition.
	\begin{proposition}
		The map $\Spec(\pi^*): \Spec(\CD^G(X)) \to \Spec(\CD^b(Y))$ is an
		injective map between smooth projective curves $X$ and $Y$.
	\end{proposition}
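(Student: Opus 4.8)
\emph{Proof proposal.}
The plan is to pin down a prime ideal of $\CD^G(X)$ through the coarse data visible via $\pi^*$, so that two primes with the same preimage must coincide. So suppose $\FP_1,\FP_2\in\Spc(\CD^G(X))$ with $(\pi^*)^{-1}(\FP_1)=(\pi^*)^{-1}(\FP_2)=:\FQ$; by symmetry it suffices to show $\CF\in\FP_1\Rightarrow\CF\in\FP_2$ for every object $\CF$. First I would cut down the objects to be tested. By the equivariant analogue of Proposition~\ref{curve} — every object of $\CD^G(X)$ is a finite direct sum of shifts of coherent $G$-sheaves since $\coh^G(X)$ is hereditary, and every coherent $G$-sheaf splits $G$-equivariantly as (torsion $G$-sheaf)$\,\oplus\,$(locally free $G$-sheaf): the underlying extension of ordinary sheaves splits because its obstruction in $\mathrm{Ext}^1_{\coh(X)}$ lies over a sheaf that is torsion on a smooth curve, and one averages a splitting over $G$ since $\chr k\nmid|G|$ — together with additivity and thickness of the ideals $\FP_i$, it is enough to handle (i) $\CF$ a locally free $G$-sheaf and (ii) $\CF$ a torsion $G$-sheaf.

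For case (i) I would show that a locally free $G$-sheaf $\CF$ of positive rank $n$ lies in \emph{no} proper thick tensor ideal, so there is nothing to prove. For each point $x$ in the dense, $G$-stable, cofinite locus $U\subseteq X$ where $G$ acts freely, $\pi$ is étale near $x$, so $\CF\otimes\pi^*k_{\pi(x)}\cong\CF|_{G\cdot x}\cong(\pi^*k_{\pi(x)})^{\oplus n}$ $G$-equivariantly (a $G$-sheaf on a free orbit is pulled back from the point). Since $\CF\otimes\pi^*k_{\pi(x)}\in\lan\CF\ran$ and $\lan\CF\ran$ is thick, $\pi^*k_{\pi(x)}\in\lan\CF\ran$, i.e. $k_{\pi(x)}\in(\pi^*)^{-1}\lan\CF\ran$. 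As $x$ ranges over $U$ the points $\pi(x)$ are dense in $Y$, so by Thomason's classification of thick tensor ideals on $Y$ (as used in Balmer's reconstruction theorem) the thick tensor ideal they generate is all of $\CD^{per}(Y)$; hence $\CO_Y\in(\pi^*)^{-1}\lan\CF\ran$, so $\CO_X=\pi^*\CO_Y\in\lan\CF\ran$ and $\lan\CF\ran=\CD^G(X)$.

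For case (ii), write the finite $G$-invariant set $\supph(\CF)$ as a disjoint union of orbits $\pi^{-1}(y_1)\sqcup\dots\sqcup\pi^{-1}(y_m)$ and split $\CF=\bigoplus_j\CF_j$ with $\supph(\CF_j)=\pi^{-1}(y_j)$; by additivity and thickness we may assume $\CF=\CF_j$ is a nonzero torsion $G$-sheaf supported on a single orbit $\pi^{-1}(y_j)$. The crux is the local, Thomason-type identity
\[
\lan\CF_j\ran=\lan\pi^*k_{y_j}\ran .
\]
For $\pi^*k_{y_j}\in\lan\CF_j\ran$: near a point $x$ of the orbit, $\CF_j$ is a nonzero finite-length module over the discrete valuation ring $\CO_{X,x}$ with a $\mathrm{Rep}(G_x)$-action, and — using that $\mathcal{R}\mathfrak{ep}(G_x)$ is semisimple ($|G_x|$ divides $|G|$) together with truncation triangles — such a module generates, as a thick tensor ideal, every $G_x$-equivariant finite-length module at $x$, in particular the one cut out by $\mathfrak m_{y_j}\CO_{X,x}$, which globalises to $\pi^*k_{y_j}$. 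For the reverse inclusion, $\pi^*(\CO_Y/\mathfrak m_{y_j}^N)$ is obtained from $\pi^*k_{y_j}$ by finitely many extensions, and every torsion $G$-sheaf supported on $\pi^{-1}(y_j)$ is a direct summand of a tensor of a locally free $G$-sheaf with some $\pi^*(\CO_Y/\mathfrak m_{y_j}^N)$, hence lies in $\lan\pi^*k_{y_j}\ran$. Granting this, $\CF_j\in\FP_1$ gives $\pi^*k_{y_j}\in\lan\CF_j\ran\subseteq\FP_1$, so $k_{y_j}\in(\pi^*)^{-1}\FP_1=\FQ=(\pi^*)^{-1}\FP_2$, so $\pi^*k_{y_j}\in\FP_2$, and finally $\CF_j\in\lan\pi^*k_{y_j}\ran\subseteq\FP_2$. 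Combining cases (i) and (ii) yields $\FP_1\subseteq\FP_2$, and by symmetry $\FP_1=\FP_2$, so $\Spc(\pi^*)$ is injective.

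I expect the main obstacle to be the local identity $\lan\CF_j\ran=\lan\pi^*k_{y_j}\ran$ in case (ii): one must control $G_x$-equivariant finite-length modules over the local ring at a possibly ramified point of the orbit (where $\pi^*k_{y_j}$ is a fat point) and verify that both inclusions persist under passage from the local to the global category — essentially an equivariant, local form of Thomason's classification theorem. By comparison, the reductions in case (i) and the $G$-equivariant torsion/locally-free splitting are routine, the latter precisely because $\chr k\nmid|G|$ forces the relevant equivariant $\mathrm{Ext}^1$ to be the $G$-invariants of an ordinary $\mathrm{Ext}^1$ that already vanishes on a curve.
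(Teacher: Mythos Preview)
Your reduction to pure sheaves and the torsion/locally-free split is sound, and case~(ii) is essentially right (though ``direct summand'' overshoots: what is true, and sufficient, is that any torsion $G$-sheaf on the orbit lies in $\langle\pi^*k_{y_j}\rangle$ via its length filtration --- it need not literally be a summand of $\CE\otimes\pi^*(\CO_Y/\mathfrak m_{y_j}^N)$, which would require projectivity over that Artinian ring). The genuine gap is in case~(i). You correctly obtain $\pi^*k_{\pi(x)}\in\langle\CF\rangle$ for every $x$ in the free locus, hence $k_{\pi(x)}\in(\pi^*)^{-1}\langle\CF\rangle$ for a dense set of closed points of $Y$. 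But Thomason's classification does \emph{not} force this ideal to be all of $\CD^{per}(Y)$: thick tensor ideals correspond to specialisation-closed subsets of $Y$, and the specialisation closure of a set of closed points is that set itself --- it never contains the generic point. The ideal your skyscrapers generate consists only of complexes with finite support contained in $\pi(U)$, and $\CO_Y$ is not among them.

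The repair is to put a vector bundle, not just skyscrapers, into $(\pi^*)^{-1}\langle\CF\rangle$; this is exactly the paper's move in the ``$y$ generic'' half of its key lemma. Use the short exact sequence $0\to\pi^*\pi^G_*(\CF)\to\CF\to\CF'\to0$ of Proposition~\ref{supp red}, with $\CF'$ torsion supported on the non-free locus. Your own case~(ii) now gives $\CF'\in\langle\CF\rangle$: for each branch point $z$ the sheaf $\CF\otimes\pi^*k_z\in\langle\CF\rangle$ is a nonzero torsion $G$-sheaf on $\pi^{-1}(z)$, so it generates $\pi^*k_z$, which in turn generates the $z$-component of $\CF'$. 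Hence $\pi^*\pi^G_*(\CF)\in\langle\CF\rangle$, so $\pi^G_*(\CF)\in(\pi^*)^{-1}\langle\CF\rangle$. Since $\pi^G_*(\CF)$ is a nonzero locally free sheaf on $Y$ (a summand of the locally free $\pi_*\CF$ via averaging, and nonzero because $\pi^*\pi^G_*\CF\to\CF$ is an isomorphism over the free locus), \emph{now} Thomason applies and gives $(\pi^*)^{-1}\langle\CF\rangle=\CD^{per}(Y)$, whence $\CO_X\in\langle\CF\rangle$. With this fix your argument runs and is structurally close to the paper's, which organises the same dichotomy according to whether the common image $\FQ=\FQ_y$ sits at the generic or at a closed point of $Y$, rather than according to whether $\CF$ is locally free or torsion.
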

	\begin{proof}
		Suppose not, let $\FP_1$, $\FP_2$ be two distinct points of
		$\Spec(\CD^G(X))$ mapping to the same point $\FQ_y$ where $y$ is
		given by the identification of $\Spec(\CD^b(Y))$ with $Y$. Let
$\CF$ be
		an element of $\FP_1$ and using the above proposition
(\ref{curve}) we
		can assume that it is a pure sheaf. We have the following lemma
which
		gives a restriction on the homological support of such elements.
		\begin{lemma}
			$\supp(\CF) \subseteq (X \setminus \pi^{-1}(y))$.
		\end{lemma}
		First let us complete the proof of the proposition assuming this
		lemma.  From the lemma it follows that $\supp(\CF)$ is a proper
		subset of $X$ with a $G$-action. Therefore $\supp(\CF$) is a
finite
		set of points and using thickness further we can assume that it
is a
		single orbit. Suppose $H$ is a stabiliser of this orbit. Then
$G/H$
		will act freely on supp($\CF$). Now we have the decomposition,
		\[
			\CF = \oplus_{\lambda} W_{\lambda} \otimes \CF_{\lambda}
\simeq
			\oplus_{\lambda} W_{\lambda} \otimes \pi^* \pi^{G/H}_*
			(\CF_{\lambda})
		\]
		where $W_{\lambda}$ is an irreducible representation of $H$.
		Therefore $\CF \in \FP_1 \cap \FP_2$, since using a projector
		$\CF_{\lambda} = (W_{\lambda}^* \otimes \CF)^G \simeq
		\pi^*\pi^{G/H}_*(\CF_{\lambda}) \in \FP_1 \cap \FP_2 $, and
hence
		$\FP_1 \subseteq \FP_2$. Using similar arguments we can prove
$\FP_2
		\subseteq \FP_1$. This is a contradiction as $\FP_1$ and $\FP_2$
are
		distinct points.

		This proves the proposition assuming the lemma. Next we prove
the
		lemma.
	\end{proof}
	\begin{proof}[Proof of lemma]
		We prove it by contradiction. Assume $\supp(\CF) \cap
\pi^{-1}(y)
		\neq \varnothing$. If $y$ is a closed point then we can assume
that
		$\supp(\CF) = \pi^{-1}(y)$ since we can always tensor with the
		object $\CO_{\pi^{-1}(y)}$ which will give an object of $\FP_1$.
And
		if $H$ is a stabiliser of this finite $G$ set then we shall have
the
		usual decomposition $\CF = \oplus_{\lambda} W_{\lambda} \otimes
		\pi^*\pi^{G/H}_*(\CF_{\lambda})$. Hence we shall get an object,
		$\pi^{G/H}_*(\CF_{\lambda})$ , of $\FQ_y$ supported on $y$ which
is
		a contradiction.
		
		Similarly, if $y$ is a generic point of $X$ then using the above
		proposition \ref{curve} we can assume that $\CF$ is a
		$G$-equivariant vector bundle. Now using a short exact sequence,
		inspired from short exact sequence (4.7) from
paper\cite{ramanan}, $
		0 \to \pi^*\pi^G_*(\CF) \to \CF \to \CF'\to 0$ with $\CF'$
supported
		on a points, we can prove $\CF'\in \FP_1$ and hence
		$\pi^*\pi^G_*(\CF) \in \FP_1$. Now using our assumption $
		\pi^G_*(\CF) \in \FQ_y$ which is a contradiction as
$\pi^G_*(\CF)$
		is a vector bundle.
	\end{proof}

	This finishes the case of curves. For the general case we need the
	following propostion.
	\begin{proposition}
		The map $\Spec(\pi^*): \Spec(\CD^G(X)) \to \Spec(\CD^{per}(Y))$ is an
		injective map where $X$ is a smooth quasi-projective
		varieties of dimension $n$.
	\end{proposition}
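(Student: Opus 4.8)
The plan is to generalise the argument used for curves to an arbitrary smooth quasi-projective $X$ of dimension $n$, replacing the "pure sheaf $\Rightarrow$ skyscraper or vector bundle" dichotomy (Proposition \ref{curve}), which fails in higher dimension, by an induction on $\dim \supp(\CF)$ built out of the distinguished triangle supplied by Proposition \ref{supp red}. Concretely, suppose $\FP_1, \FP_2 \in \Spec(\CD^G(X))$ both map to the same prime $\FQ_y$ under $\Spec(\pi^*)$, and assume for contradiction $\FP_1 \neq \FP_2$, say $\CF \in \FP_1 \setminus \FP_2$. The first reduction, exactly as in the curve case, is the key lemma: $\supph(\CF) \subseteq X \setminus \pi^{-1}(y)$. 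For this I would again argue by contradiction from $\supph(\CF) \cap \pi^{-1}(y) \neq \varnothing$; after tensoring with $\CO_{\overline{\pi^{-1}(y)}}$ (an object of $\FP_1$, since $\FP_1$ is an ideal) I may assume $\supph(\CF) \subseteq \pi^{-1}(\overline{\{y\}})$, and then apply part 2 of Proposition \ref{supp red} repeatedly: the distinguished triangle $\pi^*\pi_*^G(\CF) \to \CF \to \CF_1$ with $\supph(\CF_1) \subsetneq \supph(\CF)$ lets me induct on the dimension of the support, at each stage producing from $\CF \in \FP_1$ an object $\pi^*\pi_*^G(\CF) \in \FP_1$, hence $\pi_*^G(\CF)$ lies in $(\pi^*)^{-1}(\FP_1) = \FQ_y$; but $\pi_*^G(\CF)$ then must have homological support meeting $y$, and chasing this down to the base case (where the relevant pushforward is forced to be non-torsion along $\overline{\{y\}}$, or visibly nonzero at $y$) yields the contradiction.

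Granting the lemma, $\supph(\CF)$ is a proper $G$-invariant closed subset of $X$. Here the higher-dimensional argument diverges from the curve case: $\supph(\CF)$ need not be a finite union of orbits, so I cannot immediately reduce to a single orbit with stabiliser $H$. Instead I would use part 4 of Proposition \ref{orbit decomp}: the $G$-invariant algebraic set $Z := \supph(\CF)$ contains pairwise disjoint open subsets $U_i$ on which $G/H_i$ acts freely, covering a dense open of $Z$. Working on the component of largest dimension and using Proposition \ref{supp red} again to peel off lower-dimensional support, I reduce to an $\CF$ whose support has free $G/H$-action on a dense open, apply the canonical decomposition $\CF = \oplus_\lambda W_\lambda \otimes \CF_\lambda$ of Proposition \ref{can_decomp} together with the free-quotient equivalence (Case 3) to write $\CF_\lambda \simeq \pi^* \pi^{G/H}_*(\CF_\lambda)$ up to lower-dimensional support, and conclude $\CF_\lambda = (W_\lambda^* \otimes \CF)^G \in \FP_1 \cap \FP_2$ by the usual projector argument, since $\pi^*\pi^{G/H}_*(\CF_\lambda)$ is pulled back from $Y$ and both $\FP_i$ have the same preimage $\FQ_y$. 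Thus $\CF \in \FP_1 \cap \FP_2$, so $\FP_1 \subseteq \FP_2$; symmetry gives equality, contradicting $\FP_1 \neq \FP_2$.

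I expect the main obstacle to be making the double induction genuinely rigorous: the outer induction is on $\dim \supph(\CF)$, but at each stage the "peeling" triangle from Proposition \ref{supp red} requires a faithful $G$-action on $\supph(\CF)$, which one only gets after passing to a quotient $G/H$ and to an individual $G$-invariant component — so the bookkeeping of which group acts faithfully on which piece, and ensuring the residual lower-dimensional sheaves $\CF_1$ stay inside $\FP_1$ (they do, since $\FP_1$ is thick and $\pi^*\pi_*^G(\CF) \in \FP_1$ forces $\CF_1 \in \FP_1$ from the triangle), is where care is needed. A secondary subtlety is the base case of the support-reduction: one must check that when $\supph(\CF)$ is zero-dimensional (a finite union of orbits) the argument of the curve case goes through verbatim, using thickness to isolate a single orbit. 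Once the support lemma is in place the rest is a routine transcription of Case 3 and the curve argument, so the real content is the lemma and the inductive scaffolding around Proposition \ref{supp red}.
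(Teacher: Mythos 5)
Your proposal follows the same high-level strategy as the paper: isolate the key support-avoidance lemma ($\supph(\CF) \subseteq X \setminus \pi^{-1}(y)$), then use a dimension induction built out of Propositions \ref{orbit decomp}, \ref{can_decomp} and \ref{supp red} to ``peel'' $\CF$ into pieces pulled back from $Y$, and finally push these pieces through the equality $(\pi^*)^{-1}(\FP_1) = (\pi^*)^{-1}(\FP_2)$. This is exactly the architecture of the paper's Lemma \ref{tower} and the surrounding proof. However, there is a genuine logical gap in how you run the induction in the support-avoidance lemma, and it is worth spelling out because your parenthetical ``fix'' is circular.

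In your first paragraph you write that at each stage one produces from $\CF \in \FP_1$ ``an object $\pi^*\pi_*^G(\CF) \in \FP_1$,'' and in the last paragraph you justify $\CF_1 \in \FP_1$ by saying ``$\pi^*\pi_*^G(\CF) \in \FP_1$ forces $\CF_1 \in \FP_1$ from the triangle.'' But from the distinguished triangle $\pi^*\pi^G_*(\CF) \to \CF \to \CF_1$ and $\CF \in \FP_1$ alone, neither of the other two terms is forced to lie in $\FP_1$; you need one of them, and you are using each to justify the other. The membership $\pi^*\pi^G_*(\CF) \in \FP_1$ (equivalently $\pi^G_*(\CF) \in \FQ_y$) is precisely the nontrivial conclusion, and it cannot be assumed. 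The paper avoids this by completely separating a \emph{purely structural} step from the ideal-theoretic step. Lemma \ref{tower}(1) builds, by induction on $\dim\supph$, a Postnikov tower
$\CG_j \to \CF_{j-1} \to \CF_j$ with $\CG_j = \bigoplus_{\lambda_j} W_{\lambda_j} \otimes \pi^*\pi^{G/H_j}_*(\CF_{\lambda_j})$ and strictly decreasing supports, with no reference to $\FP_1$ at all; the role of $\FQ_y$ enters only afterwards, through the supports of the $b_{\lambda_j} := \pi^{G/H_j}_*(\CF_{\lambda_j})$. In particular, the paper's proof of part (2) does \emph{not} try to push $\pi^*\pi^G_*(\CF') \in \FP_1$ forward; instead it shows the \emph{other} cone term $\CF''$ lies in $\langle \pi^*(\FQ_y) \rangle \subseteq \FP_1 \cap \FP_2$ (because $\supph(\CF'') \subsetneq \pi^{-1}(\bar y)$ is a proper subset of an irreducible, so its image misses $y$, and then the tower for $\CF''$ has all $\CG$-pieces in $\pi^*(\FQ_y)$). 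Only then, from $\CF', \CF'' \in \FP_1$ and the triangle, does one deduce $\CG \in \FP_1$, hence $\pi^{G/H}_*(\CF'_\lambda) \in \FQ_y$, and the contradiction comes from $\supph(\pi^{G/H}_*(\CF'_\lambda)) = \bar y \ni y$. If you restructure your argument to first prove the tower as a free-standing structural statement, and then derive the ``objects with support avoiding $\pi^{-1}(y)$ lie in $\langle \pi^*(\FQ_y)\rangle$'' consequence, the circularity disappears and your remaining steps (the dimension induction over the components $U_i$ and stabilisers $H_i$, the use of the free-quotient equivalence on each $U_i$, and the final backwards run through the tower to put $\CF$ in $\FP_2$) are exactly right.
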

	\begin{proof}
		Suppose not, let $\FP_1$, $\FP_2$ be two distinct points of
		$\Spec(\CD^G(X))$ which maps to the same point $\FQ_y$ i.e.
		$(\pi^*)^{-1}(\FP_1) = (\pi^*)^{-1}(\FP_2) = \FQ_y$. Let $\CF
\in
		\FP_1$ be an complex of $G$-equivariant sheaves. We need
following
		lemma.
		\begin{lemma} \label{tower}
			\begin{enumerate}
				\item
					There exists a tower of distinguished
triangles for each
					element $\CF$,
					\begin{displaymath}
						\xymatrix@C=10pt@M=2pt{
							\CF=\CF_0 \ar[rr] & &
\CF_1 \ar@{-->}[dl] &
								\cdots &
\CF_{m-1} \ar[rr]& & \CF_m
								\ar@{-->}[dl]
\ar@{=}[dr]\\
							& \CG_1 \ar[ul] & &
\cdots& &\CG_{m-1} \ar[ul] &
								& \CG_m
						}
					\end{displaymath}
					where $\CG_i =
\underset{\lambda_i}{\bigoplus}
					W_{\lambda_i} \otimes \pi^*\pi^{G/H_i}_*
					(\CF_{\lambda_i})$ with the 
					sum being over the irreducible
representations of the
					corresponding $H_i$'s, $\supph(\CF_m)
\subsetneq \ldots
					\subsetneq \supph(\CF)$ and
					$\supph(\pi^{G/H_j}_*(\CF_{\lambda_j}))
\subseteq
					\pi(\supph(\CF_{\lambda_j} )) =
					\supph(\pi_*(\CF_{\lambda_j}))$.
				\item
					$\supph(\CF) \subseteq (X \setminus
\pi^{-1}(y))$.
			\end{enumerate}
			Also note, we can prove that any $\CF \in \CD^G(X)$ with
the
			homological support contained in $(X \setminus
\pi^{-1}(y))$
			will be an element of $\FP_1$.
		\end{lemma}
		We shall first complete the proof of the proposition assuming
this
		lemma. By the lemma, the homological support of $\CF$ is a
proper
		closed subset of $X$ not containing $\pi^{-1}(y)$. Note that in
		above lemma similar to the case of $\CF$'s we also have
decreasing
		filtration of a homological supports for $\CG$'s. If we start
with
		$\CF \in \FP_1$ then using (2.) of the lemma we have
		$\supph(\pi^{G/H_1}_*(\CF_{\lambda_1})) \subseteq (Y \setminus
y)$
		and hence $\CG_1 \in \pi^*(\FQ_y) \subseteq \FP_1 \cap \FP_2$.
		Therefore by definition of the prime ideal $\CF_1 \in \FP_1$.
Again
		using lemma we have the restriction on homological  support of
		$\CF_1$ which gives  $\CG_2 \in  \pi^*(\FQ_y) \subseteq \FP_1
\cap
		\FP_2$.  Now continuing like this we can prove that   $\CG_j \in
		\pi^*(\FQ_y) \subseteq \FP_1 \cap \FP_2$ for $j = 1, \ldots, m$.
 In
		particular, $\CF_m \in \FP_2$ and hence $\CF_{m-1} \in \FP_2$.
Now
		continuing in the reverse direction we can prove that $\CF \in
		\FP_2$. Thus $\FP_1 \subseteq \FP_2$. Similarly by symmetry we
can
		prove that  $\FP_2 \subseteq \FP_1$. This is a contradiction as
		$\FP_1$ and $\FP_2$ are distinct points.
	\end{proof}

	\begin{proof}[Proof of the lemma.]
		\emph{Proof of 1.} To prove the first part we consider the
		homological support of $\CF$ as $G$-subset  of the $G$-set $X$
and induct on dimension of it. If dimension is zero then it will be set of
$G$-invariant points and we shall get the direct sums of skyscrapers on these
points. If we have free action of $G/H$ for some subgroup $H$ then we shall have
the canonical decomposition\ref{can_decomp}. This will prove first step of
induction. Assume now for dimension of $\supph(\CF) \leq (n-1)$ we have a tower
as in statement of lemma. Now consider $\CF$ with dimension of $\supph(\CF) =
n$. Here
		$\supph(\CF)$ is a union of $G$-invariant components and using
the proposition\ref{orbit decomp} we shall get the open subsets $U_i$ for $ i
=1, \ldots,k$ and subgroups $H_i$ for $i=1,\ldots,k$. As observed before these
open sets are mutually disjoint and there is a free action of group $G/H_i$ for
$i=1,\ldots,k$ on each $U_i$ respectively. We can decompose $i_U^*(\CF) =
\oplus_{i=1}^k i_{U_j}^*(\CF) \mbox{ for } j =1,\ldots,k$.   Start with an open
subset $U_1$. We have the decomposition \ref{supp red} of $i_{U_1}^*(\CF)$,
		\[
                        i_{U_1}^*(\CF) = \underset{\lambda_1}{\oplus}
W_{\lambda_1} \otimes
			\CF_{\lambda_1}
		\]
		where $W_{\lambda_j}$ is an irreducible representation of
subgroup
		$H_1$. In this decomposition, all the $\CF_{\lambda_1}$ are also
		$G/H_1$-sheaves over open subset $U_1$. Using adjunction and the
inclusion we get a canonical isomorphism\ref{supp red} $\eta :
\pi^*\pi^{G/H_1}_*(\CF_{\lambda_1}) \to
		\CF_{\lambda_1}$ over open set $U_1$. This will give an
isomorphism $ \eta : \pi ^* \pi_*^{G/H_1}(i_{U_1}^* \CF) \to i_{U_1}^*\CF$ of
objects over open set $U_1$ by using additivity. Now this isomorphism gives an
isomorphism $i_{U_1}^*(\eta) :  i_{U_1}^*(\pi^*\pi^{G/H_1}_*(\CF_{\lambda_1}))
\to
		i_{U_1}^*(\CF_{\lambda_1})$. This follows from
		flat base change and some functorial properties. For the
		diagram,
		\[
			\xymatrix{
			U_1 \ar@{^{(}->}[r]^{i_{U_1}} \ar[d]^{\pi} & X
\ar[d]^{\pi}\\
			V_1 \ar@{^{(}->}[r]^{i_{V_1}} & Y
			}
		\]
		we have canonical isomorphisms,
		\begin{align*}
			i_{U_1}^*(\pi^*\pi^{G/H_1}_*(\CF_{\lambda_1})) &\simeq
\pi^*i^*_{V_1}
			(\pi_*(\CF_{\lambda_1}))^{G/H_1} \simeq \pi^*(i_{V_1}^*
			\pi_*(\CF_{\lambda_1}))^{G/H_1}\\
			&\simeq \pi^*(\pi_*i^*_{U_1}(\CF_{\lambda_1}))^{G/H_1} =
			\pi^*\pi_*^{G/H_1}i^*_{U_1}(\CF_{\lambda_1}) \text{,}
		\end{align*}
		which will imply that the map $i^*_{U_1}(\eta)$ is an
isomorphism.
		Therefore the cone of a map $\eta$, say $\CF_1$, will have the
		property that $i^*_{U_1}(\CF_1) = 0$ and hence $\supph(\CF_1)
\subseteq
		(X \setminus U) \subsetneq \supph (\CF)$. And since $\pi$ is an
		affine map then $\pi_*$ will be exact and hence we can prove
that
		$\pi(\supph(\CF_{\lambda_1} )) =
		\supph(\pi_*(\CF_{\lambda_1}))\supseteq
		\supph(\pi^{G/H_1}_*(\CF_{\lambda_1})$. Now we can proceed
similarly
		with $\CF_1$ which has less number of $G$-invariant components
than
		$\CF$ and hence in finitely many steps (in less than $k$ steps)
dimension of homological
		support will drop. Hence we shall get $\CF_i$ and $\CG_i$ for
$i=1,\ldots,l$ with the stated restrictions on supports. The dimension of
$\supph(\CF_l) \leq (n-1)$ and hence using induction we are done.

		\emph{Proof of 2.} Suppose $\supph(\CF) \cap \pi^{-1}(y) \neq
		\varnothing$ and hence we get $\CF' = \CF \otimes
		\CO_{\pi^{-1}(\bar{y})} \in \FP_1$. Observe that $\supph(\CF') =
		\pi^{-1}(\bar{y}) = \overline{\pi^{-1}(y)}$. Now applying the
same
		procedure as in 1., we shall get a distinguished triangle
		\[
			\underset{\lambda}{\oplus} W_{\lambda} \otimes
			\pi^*\pi^{G/H}_*(\CF'_{\lambda}) \to \CF' \to \CF'' \to
		\]
		with $\supph(\CF'') \subsetneq \supph(\CF')$ and hence again
		applying 1., we can prove that $\CF'' \in
\langle\pi^*(\FQ_y)\rangle
		\subseteq \FP_1$. But this gives
$\pi^*\pi^{G/H}_*(\CF'_{\lambda})
		\in \FP_1$ with $\supph(\pi^{G/H}_*(\CF'_{\lambda})) = \bar{y}$
		which is a contradiction as $\pi^{G/H}_*(\CF'_{\lambda}) \notin
		\FQ_y$.
	\end{proof}

\subsubsection*{Step 3: $\Spec(\pi^*)$ is closed and hence is a
homeomorphism.}
	Here we need  bijection of the above step to prove closedness of the map
	$\Spec(\pi^*$). We shall use the fact that $W \otimes \CO_X \notin \FP$
	for any finite dimensional representation and any  prime ideal $\FP$.
	Indeed this follows from the fact that every representation contains the
	trivial representation as a direct summand of some tensor power i.e.
	$\CO_X \subseteq (V_{\lambda} \otimes \CO_X)^{\otimes |G|}$. Since
	$\supp(a)$, $a \in \CD^G(X)$, are the basic closed sets therefore it is
	enough to prove that image under the map $\Spec(\pi^*$) are closed. Now
	to prove this we shall use the description given in lemma \ref{tower}
	for any object of $\CD^G(X)$. Letting $b_{\lambda_j} =
	\pi^{G/H_j}_*(a_{\lambda_j})$ for simplicity, we have the following
	lemma.
	\begin{lemma}
		$\Spec(\pi^*)(\supp(a)) = \bigcup_j\bigcup_{\lambda_j}
\supp(b_{\lambda_j})$. 
	\end{lemma}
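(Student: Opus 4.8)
The plan is to reduce the asserted set‑equality to an identity of Balmer supports inside $\Spc(\CD^G(X))$ and then read it off from the tower of Lemma \ref{tower}. Since $\Spec(\pi^*)$ is a bijection (onto by Step~1, injective by Step~2) and functoriality of $\Spc$ gives $\Spec(\pi^*)^{-1}(\supp(c))=\supp(\pi^*(c))$ for every $c\in\CD^{per}(Y)$, the equality $\Spec(\pi^*)(\supp(a))=\bigcup_j\bigcup_{\lambda_j}\supp(b_{\lambda_j})$ is equivalent to
\[
  \supp(a)\;=\;\bigcup_j\bigcup_{\lambda_j}\supp\bigl(\pi^*(b_{\lambda_j})\bigr)\qquad\text{in }\Spc(\CD^G(X)).
\]
Using $\supp(x\otimes y)=\supp(x)\cap\supp(y)$ (primality) and the fact recorded in Step~3 that $\supp(W_{\lambda_j}\otimes\CO_X)=\Spc(\CD^G(X))$ (because $\CO_X$ is a direct summand of $(W_{\lambda_j}\otimes\CO_X)^{\otimes|G|}$), one has $\supp(W_{\lambda_j}\otimes\pi^*(b_{\lambda_j}))=\supp(\pi^*(b_{\lambda_j}))$, so the right–hand side equals $\bigcup_j\supp(\CG_j)$ for the tower terms $\CG_j=\bigoplus_{\lambda_j}W_{\lambda_j}\otimes\pi^*(b_{\lambda_j})$. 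Thus it suffices to prove $\supp(a)=\bigcup_j\supp(\CG_j)$.

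For ``$\subseteq$'' I would apply $\supp(-)$ to the distinguished triangles $\CG_j\to\CF_{j-1}\to\CF_j$ of Lemma \ref{tower}. Since $\supp(v)\subseteq\supp(u)\cup\supp(w)$ for any distinguished triangle $u\to v\to w$, a descending induction along the tower — starting from the last term $\CF_m$, which is itself one of the $\CG_j$ — yields $\supp(a)=\supp(\CF_0)\subseteq\bigcup_j\supp(\CG_j)$.

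For ``$\supseteq$'' I would translate into homological supports in $X$ and $Y$. Under $\Spec(\pi^*)$ and $\Spc(\CD^{per}(Y))\cong Y$, every prime of $\CD^G(X)$ is $\FP_y:=\Spec(\pi^*)^{-1}(\FQ_y)$ for a unique $y\in Y$, and — this is exactly what the two support lemmas inside Step~2 give, now read off for an arbitrary prime rather than the two compared there — $\CF\in\FP_y$ iff $\pi^{-1}(y)\cap\supph(\CF)=\varnothing$; equivalently $\supp(\CF)$ corresponds to $\pi(\supph(\CF))\subseteq Y$. Under this dictionary $\supp(\CG_j)\subseteq\supp(a)$ becomes $\pi(\supph(\CG_j))\subseteq\pi(\supph(a))$, and since $\supph(\CG_j)=\bigcup_{\lambda_j}\supph(\pi^*(b_{\lambda_j}))=\bigcup_{\lambda_j}\pi^{-1}(\supph(b_{\lambda_j}))$ with
\[
  \supph(b_{\lambda_j})=\supph\bigl(\pi^{G/H_j}_*(a_{\lambda_j})\bigr)\subseteq\pi\bigl(\supph(a_{\lambda_j})\bigr)\subseteq\pi\bigl(\supph(a)\bigr)
\]
by the support bounds of Lemma \ref{tower} (the $a_{\lambda_j}$ are summands of restrictions of the $\CF_i$, whose homological supports sit inside the $G$‑stable set $\supph(a)$), we get $\pi(\supph(\CG_j))\subseteq\pi(\supph(a))$, hence $\supp(\CG_j)\subseteq\supp(a)$, completing the equality.

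I expect the main obstacle to be the homological‑support bookkeeping behind ``$\supseteq$'': checking carefully that the $a_{\lambda_j}$ produced by Lemma \ref{tower} satisfy $\supph(a_{\lambda_j})\subseteq\supph(a)$ (at each stage the open sets $U_i$ of Proposition \ref{orbit decomp} lie in $\supph(\CF_{i-1})$, and $\pi^{G/H_i}_*$, being the $G/H_i$‑invariant summand of the affine pushforward $\pi_*$, does not enlarge support), and extracting from Step~2 the characterisation $\CF\in\FP_y\Leftrightarrow\pi^{-1}(y)\cap\supph(\CF)=\varnothing$ for \emph{every} prime. Granting that support dictionary, the lemma is a formal consequence of the tower, the relation $\supp(x\otimes y)=\supp(x)\cap\supp(y)$, and the full support of $W_{\lambda_j}\otimes\CO_X$.
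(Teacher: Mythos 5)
Your proof is correct and, after the opening reduction, coincides with the paper's argument: the pull-back identity $\Spec(\pi^*)^{-1}(\supp(c))=\supp(\pi^*(c))$ together with bijectivity of $\Spec(\pi^*)$ converts the claim into $\supp(a)=\bigcup_j\supp(\CG_j)$, which is precisely the biconditional ``$a\in\FP\Leftrightarrow W_{\lambda_j}\otimes\pi^*(b_{\lambda_j})\in\FP\;\forall j,\lambda_j$'' that the paper records (using $W_{\lambda_j}\otimes\CO_X\notin\FP$ exactly as you do). The only departure is that you actually spell out both directions of that biconditional — the easy one by applying $\supp$ along the tower, the hard one via the homological-support dictionary $\CF\in\FP_y\Leftrightarrow\pi^{-1}(y)\cap\supph(\CF)=\varnothing$ drawn from Lemma~\ref{tower} — whereas the paper simply asserts the equivalence, having established its ingredients in Step~2; so you have filled in, rather than changed, the argument.
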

	\begin{proof}
		Given $a \in \FP$ we have $b_{\lambda_j}$'s as in lemma
\ref{tower}.
		Now,
		\beast
			a \in \FP & \Leftrightarrow & W_{\lambda_j} \otimes
				\pi^*(b_{\lambda_j}) \in \FP ~~~~~~  \forall j ,
\lambda_j\\
			& \Leftrightarrow &  \pi^*(b_{\lambda_j}) \in \FP
\text{,
				~~~~since } ~W_{\lambda_j} \otimes \CO_X \notin
\FP. \\
			\text{ Therefore}~~~~~~~~~~ a \notin \FP &
\Leftrightarrow &
				\exists~ \lambda_j \text{~~s.t.~~}
\pi^*(b_{\lambda_j})
				\notin \FP.
		\eeast
		Let $\FP \in \supp(a)$ and hence by the definition $a \notin
\FP$.
		Now using the above observation there exists a $\lambda_j$ such
that
		$\pi^*(b_{\lambda_j}) \notin \FP$ i.e. $b_{\lambda_j} \notin
		(\pi^*)^{-1}(\FP) = \Spec(\pi^*)(\FP)$ and hence
$\Spec(\pi^*)(\FP)
		\in \supp(b_{\lambda_j})$. Therefore $\Spec(\pi^*)(\supp(a))
\subseteq
		\cup_j \cup_{\lambda_j} \supp(b_{\lambda_j})$.
		
		Conversely suppose $\FQ \in \cup_j\cup_{\lambda_j}
		\supp(b_{\lambda_j})$ and hence $\FQ \in \supp(b_{\lambda_j})$
for
		some $\lambda_j$.  Therefore by definition $b_{\lambda_j} \notin
		\FQ$ but using the bijection of the map $\Spec(\pi^*)$ we have
		$b_{\lambda_j} \notin (\pi^*)^{-1}(\FP) = \FQ$ for some $\FP$.
Now
		it follows that $ \pi^*(b_{\lambda_j}) \notin \FP$ and once
again
		using the above observation we have $a \notin \FP$ i.e. $\FP \in
		\supp(a)$. Hence we have $\cup_j\cup_{\lambda_j}
		\supp(b_{\lambda_j}) \subseteq \Spec(\pi^*)(\supp(a))$.
	\end{proof}
	Since union in right hand side of above lemma is finite it follows that
	the image of $\supp(a)$ under the map $\Spec(\pi^*)$ is closed for all
	$a \in \CD^G(X)$. Hence the map $\Spec(\pi^*)$ is a closed map and
	therefore it is a homeomorphism.

\subsubsection*{Step 4: $\Spec(\pi^*)$ is an isomorphism.}
	In this step we shall prove that the above homeomorphism spec($\pi^*$)
	is, in fact, an isomorphism. We begin by proving the following lemma
	which we shall use later.
	\begin{lemma} \label{adjunction}
		There exist a natural transformation $\eta : \pi^* \pi^G_* \to
Id$
		(resp. $\mu : Id \to \pi^G_* \pi^*$) such that $\eta(\CO_X) =
id$
		(resp.  $\mu(\CO_Y) = id$) where $\pi^* \pi^G_*(\CO_X) = \CO_X$
		(resp. $\pi^G_* \pi^*(\CO_Y) = \CO_Y $).
	\end{lemma}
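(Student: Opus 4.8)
The plan is to obtain $\eta$ and $\mu$ as the counit and unit of an adjunction $\pi^* \dashv \pi^G_*$, and then to read off the normalisation at the tensor units from Mumford's quotient theorem together with the triangle identities. First I would set up this adjunction at the level of abelian categories. Since the order of $G$ is invertible in $k$, the averaging operator $\frac{1}{|G|}\sum_{g \in G}\rho_g$ splits off the $G$-invariants functor as a direct summand, so $(-)^G \colon \coh^G(Y) \to \coh(Y)$ (with $G$ acting trivially on $Y$) is exact and is right adjoint to the functor equipping a sheaf on $Y$ with the trivial linearisation. Combining this with the ordinary adjunction $\pi^* \dashv \pi_*$ and using that $\pi_*$ is exact ($\pi$ is finite, hence affine), one gets that $\pi^G_* = (-)^G \circ \pi_*$ is right adjoint to $\pi^* \colon \coh(Y) \to \coh^G(X)$. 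As $\pi_*$ and $(-)^G$ are exact, $\pi^G_*$ is its own derived functor, and the adjunction descends to the triangulated categories at hand — using that $\pi$ is a perfect morphism so that $\pi^G_*$ carries $\CD^G(X)$ into $\CD^{per}(Y)$ — yielding $\pi^* \dashv \pi^G_*$ at the derived level.

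I would then \emph{define} $\eta \colon \pi^* \pi^G_* \to \Id$ on $\CD^G(X)$ to be the counit and $\mu \colon \Id \to \pi^G_* \pi^*$ on $\CD^{per}(Y)$ to be the unit of this adjunction. Unwinding the construction, $\eta(\CF)$ is the composite $\pi^*\big((\pi_*\CF)^G\big) \to \pi^*\pi_*\CF \to \CF$ of $\pi^*$ applied to the inclusion of the $G$-invariants followed by the ordinary counit, while $\mu(\CG)$ is the ordinary unit $\CG \to \pi_*\pi^*\CG$ corestricted along $(\pi_*\pi^*\CG)^G \hookrightarrow \pi_*\pi^*\CG$; the image of the unit lands in the invariants because $G$ acts trivially on $\CG$.

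Finally I would verify the normalisation. As $\pi^*$ is unital, $\pi^*(\CO_Y) = \CO_X$, so $\pi^G_*\pi^*(\CO_Y) = (\pi_*\CO_X)^G$, and likewise $\pi^G_*(\CO_X) = (\pi_*\CO_X)^G$ gives $\pi^*\pi^G_*(\CO_X) = \pi^*\big((\pi_*\CO_X)^G\big)$. By the quotient theorem of Mumford recalled in Section~\ref{basics} (\cite{Mumford}) the natural map $m \colon \CO_Y \to (\pi_*\CO_X)^G$ — which is precisely the corestriction to invariants of the ordinary adjunction unit at $\CO_Y$ — is an isomorphism, and it supplies the identifications $\pi^G_*\pi^*(\CO_Y) = \CO_Y$ and $\pi^*\pi^G_*(\CO_X) = \CO_X$ appearing in the statement. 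By the description above $\mu(\CO_Y)$ is exactly $m$, so under the first identification it becomes $\mathrm{id}$. Dually, $\eta(\CO_X)$ composed with $\pi^*(m)$ unwinds to the composite $\pi^*(\CO_Y) \to \pi^*\pi_*\pi^*(\CO_Y) \to \pi^*(\CO_Y)$, namely $\pi^*$ of the ordinary unit followed by the ordinary counit, which is $\mathrm{id}_{\pi^*(\CO_Y)}$ by one of the triangle identities for $\pi^* \dashv \pi_*$; so under the second identification $\eta(\CO_X) = \mathrm{id}$.

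The only step that requires genuine care is the first: making precise that invariants-of-pushforward is right adjoint to equivariant pullback on the actual triangulated categories, and that its counit and unit are compatible — through the inclusion of $G$-invariants — with the sheaf-theoretic counit and unit of $\pi^* \dashv \pi_*$. Once invertibility of $|G|$ is used to make $(-)^G$ an exact direct-summand functor, and finiteness of $\pi$ to suppress higher direct images, this compatibility is routine and both normalisation identities drop out of the triangle identities.
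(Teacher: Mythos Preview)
Your construction of $\eta$ is exactly the paper's: the counit of $\pi^*\dashv\pi_*$ precomposed with $\pi^*$ of the inclusion of $G$-invariants, and likewise for $\mu$. The only genuine difference is in the normalisation step. The paper verifies $\eta(\CO_X)=\mathrm{id}$ by reducing to the affine case and computing directly that, with $B=A^G$, the multiplication map $A\otimes_B(_BA)^G\to A$ is the inverse of the canonical identification $A\cong A\otimes_B B$. You instead invoke the triangle identity for $\pi^*\dashv\pi_*$: after precomposing with $\pi^*(m)$ (Mumford's isomorphism $\CO_Y\xrightarrow{\sim}(\pi_*\CO_X)^G$), the composite becomes $\pi^*$ of the ordinary unit followed by the ordinary counit, hence the identity. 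Your route is cleaner and avoids unpacking the affine picture, while the paper's hands-on computation has the virtue of making the identification $\pi^*\pi^G_*(\CO_X)=\CO_X$ explicit at the level of rings; both are correct and short.
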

	\begin{proof}
		We shall prove the existence of $\eta$, as $\mu$ can be found
using
		similar arguments. Since the functor $\pi^*$ is a left adjoint
of
		the functor $\pi_*$ we have a natural transformation $\eta' :
\pi^*
		\pi_* \to \Id$ given by the adjunction property. We also have a
		natural transformation given by inclusion of $G$-invariant part
of
		sheaves on $Y$, say $I$. Now composing with the functors $\pi^*$
and
		$\pi_*$ we get another natural transformation which composed
with
		$\eta'$ gives the $\eta$ i.e. $\eta := \eta' \circ (\pi^*\cdot I
		\cdot\pi_*) $. Now to prove $\eta(\CO_X) = \Id$ we can assume
that
		$X$ is an affine variety.  Suppose $\tilde{A}$ is a structure
sheaf
		of $X$. As $A$ is flat over $A^G$ we can reduce to computing a
map
		from $\pi^* \pi^G_* (\tilde{A}) \to \tilde{A}$, in place of its
		derived functors. Now clearly the multiplication map $ A \otimes
		(_{B}A)^G \to A$ is just inverse of the natural identification
map
		of $A$ with $A \otimes (_{B}A)^G$. Hence the map $\eta(\CO_X) :
		\tilde{A} \to \tilde{A}$ is an identity map. Similarly we can
prove
		that $\mu(\CO_Y) = \Id$.
	\end{proof}

	Recall the definitions of structure sheaves and associated map of the
	sheaves given by the unital tensor functor of underlying tensor
	triangulated categories \ref{shv} i.e. given an unital functor $\pi^* :
	\CD^{per}(Y) \to \CD^G(X)$ the morphism $\Spec(\pi^*)$ induces a map of the
	structure sheaves, $\Spec(\pi^*)^{\#}:  \CO_Y \to \CO_X$. We shall prove
	that this map is an isomorphism by observing that $\Spec(\pi^*)^{\#}(V)$
	is an isomorphism for every open set $V \subseteq \Spec(\CD^{per}(Y))$. If we
	take $U = \pi^{-1}(V)$, $Z = Y \setminus V$ and $ Z' = X \setminus U$
	then we have a functor $\pi^*_V : \frac{\CD^{per}(Y)}{\CD^{per}_Z(Y)} \to
	\frac{\CD^G(X)}{\CD^G_{Z'}(X)}$ which will induce a map
	$\Spec(\pi^*)^{\#}(V):= \pi^*_V : \End_{\frac{\CD^{per}(Y)}{\CD^{per}_Z(Y)}}(\CO_Y)
	\to \End_{\frac{\CD^G(X)}{\CD^G_{Z'}(X)}}(\CO_X)$.

	\begin{lemma}
		The map  $\pi^*_V : \End_{\frac{\CD^{per}(Y)}{\CD^{per}_Z(Y)}}(\CO_Y) \to
		\End_{\frac{\CD^G(X)}{\CD^G_{Z'}(X)}}(\CO_X)$ is surjective.
	\end{lemma}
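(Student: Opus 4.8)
The plan is to produce, for an arbitrary endomorphism $\phi$ of $\CO_X$ in the localised category $\frac{\CD^G(X)}{\CD^G_{Z'}(X)}$, an explicit preimage under $\pi^*_V$, using the functor $\pi^G_*$ together with the natural transformation $\eta$ of Lemma \ref{adjunction}. The key input is that $\pi^G_*$ is a triangulated functor which behaves well with respect to homological supports and which satisfies $\pi^G_*(\CO_X) = (\pi_*\CO_X)^G = \CO_Y$, together with the normalisation $\eta_{\CO_X} = \id$.

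First I would check that $\pi^G_*$ descends to a functor on the localised categories. Since $\pi$ is finite, hence affine, $\pi_*$ is exact, and taking $G$-invariants is exact because $|G|$ is invertible in $k$; consequently, for every $\CF \in \CD^G(X)$ one has the homological support estimate $\supph(\pi^G_*\CF) \subseteq \pi(\supph\CF)$ (as already used in the proof of Lemma \ref{tower}). The open set $U = \pi^{-1}(V)$ is $G$-invariant, so $Z' = X \setminus U = \pi^{-1}(Z)$, and since $\pi$ is surjective $\pi(Z') = Z$; hence $\pi^G_*$ carries $\CD^G_{Z'}(X)$ into $\CD^{per}_Z(Y)$ and therefore induces $(\pi^G_*)_V : \frac{\CD^G(X)}{\CD^G_{Z'}(X)} \to \frac{\CD^{per}(Y)}{\CD^{per}_Z(Y)}$. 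Now, given $\phi \in \End_{\frac{\CD^G(X)}{\CD^G_{Z'}(X)}}(\CO_X)$, I would set $\psi := (\pi^G_*)_V(\phi)$, which is an endomorphism of $\pi^G_*(\CO_X) = \CO_Y$ in $\frac{\CD^{per}(Y)}{\CD^{per}_Z(Y)}$. To see that $\pi^*_V(\psi) = \phi$, note that $\pi^*_V(\psi) = (\pi^*)_V(\pi^G_*)_V(\phi) = (\pi^*\pi^G_*)_V(\phi)$, an endomorphism of $\pi^*\pi^G_*(\CO_X) = \CO_X$. The transformation $\eta : \pi^*\pi^G_* \to \Id$ of Lemma \ref{adjunction} descends to the localised categories (both source and target functors descend), and its naturality applied to $\phi$ gives $\eta_{\CO_X} \circ (\pi^*\pi^G_*)_V(\phi) = \phi \circ \eta_{\CO_X}$; since $\eta_{\CO_X} = \id$ this forces $(\pi^*\pi^G_*)_V(\phi) = \phi$, that is, $\pi^*_V(\psi) = \phi$. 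Hence $\pi^*_V$ is surjective.

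The only step that is not purely formal is the first one, namely verifying that ``apply $\pi^G_*$'' is legitimate at the level of morphisms of the localised category --- equivalently, that the cone of a denominator (an object supported off $U$) is sent by $\pi^G_*$ to an object supported off $V$. This is precisely the support bookkeeping $\supph(\pi^G_*\CF) \subseteq \pi(\supph\CF)$ together with surjectivity of $\pi$, both of which are available from the finiteness of $\pi$ and the hypothesis that $|G|$ is coprime to $\chr k$. Once $(\pi^G_*)_V$ is in hand, the rest is a formal consequence of the normalisations $\pi^G_*(\CO_X) = \CO_Y$ and $\eta_{\CO_X} = \id$ from Lemma \ref{adjunction}, so I expect no further difficulty there.
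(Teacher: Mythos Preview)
Your proof is correct and follows essentially the same strategy as the paper: apply $\pi^G_*$ to produce a candidate preimage and use the natural transformation $\eta$ of Lemma~\ref{adjunction} together with the normalisation $\eta_{\CO_X}=\id$ to verify it is sent back to the given endomorphism. The paper carries this out explicitly at the level of roofs (and checks by hand that the cone of $\eta(\CF)$ lies in $\CD^G_{Z'}(X)$), whereas you package the same computation as the formal descent of $\pi^G_*$, $\pi^*\pi^G_*$, and $\eta$ to the Verdier quotients; the underlying mechanism is identical.
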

	\begin{proof}
		Suppose $[\CO_Y \xleftarrow{s} \CG \xrightarrow{a} \CO_Y]$ is an
		element of $\End_{\frac{\CD^{per}(Y)}{\CD^{per}_Z(Y)}}(\CO_Y)$ then the map
		$\pi^*$ will send it to an element $[\CO_X \xleftarrow{\pi^*(s)}
		\pi^*(\CG) \xrightarrow{\pi^*(a)} \CO_X]$ of
		$\End_{\frac{\CD^G(X)}{\CD^G_{Z'}(X)}}(\CO_X)$. It is now enough
to
		prove that this map is a bijection.

		Let  $[\CO_X \xleftarrow{t} \CF \xrightarrow{b} \CO_X] \in
		\End_{\frac{\CD^G(X)}{\CD^G_{Z'}(X)}}(\CO_X)$ be a given element
		then using the functor $\pi^G_*$ we shall get an element 
[$\CO_Y
		\xleftarrow{\pi^G_*(t)} \pi^G_*(\CF) \xrightarrow{\pi^G_*(b)}
\CO_Y
		] \in  \End_{\frac{\CD^{per}(Y)}{\CD^{per}_Z(Y)}}(\CO_Y)$ as
		$\supph(C(\pi^G_*(t))) \subseteq Z$ using the flat base change
and
		the canonical isomorphism, $i^*_V\pi^G_*(\CF) \simeq (i^*_V
		\pi_*(\CF))^G \simeq \pi^G_*(i^*_U \CF) \xrightarrow{i^*_U(t)}
		\pi^G_*(i^*_U \CO_X) \simeq \CO_V$. Now we want to prove that
		\[
			[\CO_X \xleftarrow{t} \CF \xrightarrow{b} \CO_X ] = 
[\CO_X
			\xleftarrow{\pi^*\pi^G_*(t)} \pi^*\pi^G_*(\CF)
			\xrightarrow{\pi^*\pi^G_*(b)} \CO_X]\text{.}
		\]
		Using the lemma \ref{adjunction}, we have a natural map
$\eta(\CF) :
		\pi^*\pi^G_*(\CF) \to \CF$, so to prove the assertion it is now
		enough to check that $t \circ \eta(\CF) = \pi^*\pi^G_*(t)$, $b
\circ
		\eta(\CF) = \pi^*\pi^G_*(b)$ and the cone of $\eta(\CF)$ is
		supported on $Z'$ that is $C(\eta(\CF)) \in \CD^G_{Z'}(X)$. Here
the
		first two assertions follows from the following commutative
diagrams
		which are a consequence of lemma \ref{adjunction}.
		\begin{displaymath}
			\xymatrix{
				\pi^*\pi^G_*(\CF) \ar[r]^-{\eta(\CF)}
					\ar[d]_-{\pi^*\pi^G_*(t)} & \CF
\ar[d]^-{t} &&
					\pi^*\pi^G_*(\CF) \ar[r]^-{\eta(\CF)}
					\ar[d]_-{\pi^*\pi^G_*(b)} & \CF
\ar[d]^-{b} \\
				\CO_X \ar@{=}[r]^-{\eta(\CO_X)} & \CO_X  &&
\CO_X
					\ar@{=}[r]^-{\eta(\CO_X)} & \CO_X 
			}
		\end{displaymath}
		Now the last assertion  $C(\eta(\CF)) \in \CD^G_{Z'}(X)$ is
		equivalent to $i^*_UC(\eta(\CF)) \simeq 0$ in $\CD^G(U)$
		but as the functor $i^*_U$ is exact this assertion is same as
		$C(i^*_U\eta(\CF)) \simeq 0$. Using a property of
		distinguished triangle it is enough to check that the map
		$i^*_U\eta(\CF)$ is an isomorphism. And this follows from the
		following commutative diagram.
		\begin{displaymath}
			\xymatrix@C 4 pc{
				i^*_U\pi^* \pi^G_*(\CF) \ar[d]^-{\wr}
					\ar[r]^-{i^*_U\eta(\CF)} & i^*_U\CF
\ar@{=}[d]\\
				\pi^* \pi^G_*(i^*_U \CF)
					\ar[d]^-{\wr}_-{\pi^*\pi^G_*i^*_U(t)}
					\ar[r]^-{\eta(i^*_U\CF)} &  i^*_U \CF
					\ar[d]^-{\wr}_-{i^*_U(t)} \\
				\pi^* \pi^G_*(\CO_U) \ar@{=}[r]^-{\eta(\CO_U)} &
  \CO_U
			}
		\end{displaymath}
		In above diagram we had used the same notations $\pi$ and $\eta$
		for its restriction on open subsets. Here the top left vertical
		isomorphism comes from the flat base change formula and using
		the following canonical isomorphism.
		\[
			i_U^*\pi^*\pi^{G}_*(\CF) \simeq \pi^*i^*_V
(\pi_*(\CF))^{G}
			\simeq \pi^*(i_V^* \pi_*(\CF))^{G} \simeq
			\pi^*(\pi_*i^*_U(\CF))^{G} = \pi^*\pi_*^{G}(i^*_U\CF).
		\]
		This proves that $\pi^*_V$ is surjective.
	\end{proof}
		
	
	\begin{lemma}
		$\pi^*_V$ is injective.
	\end{lemma}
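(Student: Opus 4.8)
The plan is to produce an explicit left inverse for $\pi^*_V$, built from the functor $\pi^G_*$, and then to check that the composite is the identity by means of the natural transformation $\mu$ of Lemma~\ref{adjunction}. First I would record that $\pi^G_*$ is a triangulated functor $\CD^G(X)\to\CD^{per}(Y)$: since $\pi$ is finite, hence affine and perfect, $\pi_*$ is exact and carries perfect complexes to perfect complexes, and $\pi^G_*(\CG)$ is the direct summand of $\pi_*(\CG)$ cut out by the projector $\frac{1}{|G|}\sum_{g\in G}\rho_g^*$, so it is again perfect; taking $G$-invariants is exact because $G$ acts trivially on $Y$. As $U=\pi^{-1}(V)$ we have $Z'=\pi^{-1}(Z)$, so for $\CG\in\CD^G_{Z'}(X)$ one gets $\supph(\pi^G_*\CG)\subseteq\pi(\supph\CG)\subseteq\pi(Z')=Z$, whence $\pi^G_*(\CD^G_{Z'}(X))\subseteq\CD^{per}_Z(Y)$. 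Consequently $\pi^G_*$ descends to a triangulated functor on the Verdier quotients, and, since $\pi^G_*(\CO_X)=\CO_Y$, it induces a ring homomorphism
\[
(\pi^G_*)_V : \End_{\frac{\CD^G(X)}{\CD^G_{Z'}(X)}}(\CO_X) \longrightarrow \End_{\frac{\CD^{per}(Y)}{\CD^{per}_Z(Y)}}(\CO_Y),
\]
which is exactly the assignment already used in the proof of the previous lemma.

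Next I would check that $(\pi^G_*)_V\circ\pi^*_V$ is the identity. Let $[\CO_Y\xleftarrow{s}\CG\xrightarrow{a}\CO_Y]$ be an element of $\End_{\frac{\CD^{per}(Y)}{\CD^{per}_Z(Y)}}(\CO_Y)$, so $C(s)\in\CD^{per}_Z(Y)$. Applying $\pi^*$ and then $\pi^G_*$ to this roof yields $[\CO_Y\xleftarrow{\pi^G_*\pi^*(s)}\pi^G_*\pi^*(\CG)\xrightarrow{\pi^G_*\pi^*(a)}\CO_Y]$. By Lemma~\ref{adjunction} there is a natural transformation $\mu:\Id\to\pi^G_*\pi^*$ with $\mu(\CO_Y)=id$; its naturality squares for $s$ and $a$ give $\pi^G_*\pi^*(s)\circ\mu(\CG)=s$ and $\pi^G_*\pi^*(a)\circ\mu(\CG)=a$. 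Thus the roof $[\CO_Y\xleftarrow{s}\CG\xrightarrow{a}\CO_Y]$, mapped by $\Id_{\CG}$ to itself and by $\mu(\CG):\CG\to\pi^G_*\pi^*(\CG)$ to the roof just displayed, is a common refinement of the two; no hypothesis on the cone of $\mu(\CG)$ is needed, since already $C(\pi^G_*\pi^*(s)\circ\mu(\CG))=C(s)\in\CD^{per}_Z(Y)$. Hence the two roofs coincide in the localisation, so $(\pi^G_*)_V\circ\pi^*_V=\Id$, and $\pi^*_V$ is injective.

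Finally, combining this lemma with the previous one shows that $\pi^*_V$ is an isomorphism of rings, with inverse $(\pi^G_*)_V$, for every open $V\subseteq\Spec(\CD^{per}(Y))$; sheafifying, $\Spec(\pi^*)^{\#}$ is an isomorphism of structure sheaves, and together with the homeomorphism of Step~3 this proves that $\Spec(\pi^*)$ is an isomorphism of locally ringed spaces, completing Step~4 and the proof of the proposition. I expect the only point requiring genuine care to be in the first paragraph — verifying that $\pi^G_*$ really takes values in $\CD^{per}(Y)$ and respects the relevant supporting closed sets, so that $(\pi^G_*)_V$ is well defined — while the remainder is a routine manipulation of fractions parallel to the surjectivity argument.
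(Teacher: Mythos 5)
Your proof is correct, and it streamlines the paper's argument in a way worth noting. Both proofs ultimately rest on the same two ingredients: the functor $\pi^G_*$ (which was already shown in the surjectivity lemma to send $\CD^G_{Z'}(X)$ into $\CD^{per}_Z(Y)$ and $\CO_X$ to $\CO_Y$) and the natural transformation $\mu : \Id \to \pi^G_*\pi^*$ of Lemma~\ref{adjunction} with $\mu(\CO_Y) = \mathrm{id}$. But where the paper unpacks what it means for $\pi^*_V([\CO_Y \xleftarrow{s} \CG \xrightarrow{a} \CO_Y])$ to vanish, pushes forward via $\pi^G_*$, and then works to show the two roofs agree, you instead organize everything as the construction of an explicit left inverse $(\pi^G_*)_V$ and verify $(\pi^G_*)_V \circ \pi^*_V = \Id$ directly. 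The cleaner point you make, and the genuine improvement, is the observation in your second paragraph: the original roof $(\CG, s, a)$ is itself a common refinement of the two roofs, with legs $\Id_\CG$ and $\mu(\CG)$, and the naturality squares force $\pi^G_*\pi^*(s)\circ\mu(\CG) = s$ and $\pi^G_*\pi^*(a)\circ\mu(\CG) = a$. Since the only cone condition required is on the map down to $\CO_Y$, namely $C(s) \in \CD^{per}_Z(Y)$, which is given by hypothesis, there is indeed no need to control $C(\mu(\CG))$. The paper's proof spends an additional commutative-diagram-plus-flat-base-change computation verifying that $i^*_V\mu(\CG)$ is an isomorphism, i.e.\ that $C(\mu(\CG)) \in \CD^{per}_Z(Y)$; you are right that this step is superfluous for establishing the equality of roofs. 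Your version is thus slightly shorter and makes the logical structure (injectivity via left inverse, combined with the surjectivity lemma to give the isomorphism of Step~4) more transparent.
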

	\begin{proof}
		Let $[\CO_Y \xleftarrow{s} \CG \xrightarrow{a} \CO_Y] \in
		\End_{\frac{\CD^{per}(Y)}{\CD^{per}_Z(Y)}}(\CO_Y)$ maps to zero in
		$\End_{\frac{\CD^G(X)}{\CD^G_{Z'}(X)}}(\CO_X)$ i.e. $[\CO_X
		\xleftarrow{\pi^*(s)} \pi^*(\CG) \xrightarrow{\pi^*(a)} \CO_X] =
0$
		which is equivalent to the existence of $\CF$ and a map $t : \CF
\to
		\pi^*\CG$ with $\supph(C(t)) \subseteq Z'$ such that $\pi^*(a)
\circ
		t = 0$. Now the map $ \pi^G_*(t) : \pi^G_*(\CF) \to
		\pi^G_*\pi^*(\CG)$ gives $\pi^G_*\pi^*(a) \circ \pi^G_*(t) = 0$
and
		as proved earlier we know that $\supph(C(\pi^G_*(t))) \subseteq
Z$
		whenever $\supph(C(t)) \subseteq Z'$.  Hence the element $[\CO_Y
		\xleftarrow{\pi^G_*\pi^*(s)} \pi^G_*\pi^*(\CG)
		\xrightarrow{\pi^G_*\pi^*(a)} \CO_Y] = 0 $ in
		$End_{\frac{\CD^{per}(Y)}{\CD^{per}_Z(Y)}}(\CO_Y)$. We shall prove that
$[\CO_Y
		\xleftarrow{s} \CG \xrightarrow{a} \CO_Y] = [\CO_Y
		\xleftarrow{\pi^G_*\pi^*(s)} \pi^G_*\pi^*(\CG)
		\xrightarrow{\pi^G_*\pi^*(a)} \CO_Y]$ as an elements of
		$\End_{\frac{\CD^{per}(Y)}{\CD^{per}_Z(Y)}}(\CO_Y)$.  Now using lemma
		\ref{adjunction} we have a map $ \mu(\CG): \CG \to
		\pi^G_*\pi^*(\CG)$ which gives the following commutative
diagrams as
		before using lemma \ref{adjunction},
		\begin{displaymath}
			\xymatrix{
				\CG \ar[r]^-{\mu (\CG)} \ar[d]_-{s} &
\pi^G_*\pi^*(\CG)
					\ar[d]^-{\pi^G_*\pi^*(s)} && \CG
\ar[r]^-{\mu (\CG)}
					\ar[d]_-{a} & \pi^G_*\pi^*(\CG)
					\ar[d]^-{\pi^G_*\pi^*(a)}\\
				\CO_Y \ar@{=}[r]^-{\mu(\CO_Y)} & \CO_Y  && \CO_Y
					\ar@{=}[r]^-{\mu(\CO_Y)} & \CO_Y 
			}
		\end{displaymath}
		Therefore it remains to prove that $i^*_VC(\mu(\CG)) = 0$ but as
		before this is equivalent to proving $C(i^*_V\mu(\CG)) =0$ since
the
		functor $i^*_V$ is an exact functor. Again using the property of
a
		distinguished triangle it is enough to prove that
$i^*_V\mu(\CG)$ is
		an isomorphism. This clearly follows from the following
commutative
		diagrams,
		\begin{displaymath}
			\xymatrix@C 4 pc{
				i^*_V\CG \ar@{=}[d] \ar[r]^-{i^*_V\mu(\CG)} &
					i^*_V\pi^G_*\pi^*(\CG) \ar[d]_-{\wr}\\
				i^*_V \CG \ar[d]^-{\wr}_-{i^*_V(s)}
\ar[r]^-{\mu(i^*_V\CG)}
					&  \pi^G_*\pi^*(i^*_V\CG)
					\ar[d]_-{\wr}^-{\pi^G_*\pi^*i^*_V(s)} \\
				\CO_V \ar@{=}[r]^-{\mu(\CO_V)} &  
\pi^G_*\pi^*(\CO_V)
				\text{.}
			}
		\end{displaymath}
		Here again as earlier the top right vertical isomorphism comes
from
		the flat base change and the following sequence of natural
		isomorphisms.
		\[
			i^*_V\pi^G_*\pi^*(\CG) \simeq i^*_V(\pi_*\pi^*\CG)^G
\simeq
			(i^*_V\pi_*\pi^*\CG)^G \simeq \pi^G_*i^*_U\pi^*\CG
\simeq
			\pi^G_*\pi^*(i^*_V\CG) \text{.}
		\]
		This proves injectivity of the map $\pi^*_V$.
	\end{proof}

	From the above two lemmas it follows that $\pi^*_V$ is an isomorphism
	and hence $\Spec(\pi^*)$ is an isomorphism of the
	varieties $\Spec(\CD^{per}(Y))$ and $\Spec(\CD^G(X))$.

\section{Example : Superschemes}\label{sec:dercatsupsch}

In this section first we shall recall the basic definition of superscheme and
some properties of it. We shall relate various notion for some superschemes with
the usual scheme with certain diagram.

\subsection{Superalgebra}

 An associative $\Z/2\Z$-grading ring is an associative ring $R$ with direct sum
decomposition $R =R^0 \oplus R^1$ as an additive group with multiplication that
preserves the grading i.e. $R^i R^j \subseteq R^{i + j}$ for $i , j \in \Z /
2\Z$. There exists a parity function which takes value in ring $\Z/2\Z =
\{0,1\}$ for every homogeneous element of $R$ i.e. if $r \in R^i$ then parity
denoted $\bar{r} = i$. Now we restrict to following important class of rings,
        \begin{definition}
         An associative $\Z/2\Z$ graded ring with unity, $R = R^0 \oplus R^1$ is
called supercommutative if the supercommutator of ring $R$ is zero i.e. $[ r_1 ,
r_2 ]:= r_1 r_2 - (-1)^{\widetilde{r_1} \widetilde{r_2}} r_2 r_1 = 0 $ for all $
r_1 , r_2 \in R$. Further ring is called $k$-superalgebra if $R$ is
supercommutative $k$-algebra with $k \subseteq R^0$.
        \end{definition}
 As usual we can define an abelian category of left modules over any
$k$-superalgebra $R$, say $\Mod(R)$. An object of this category is a $\Z / 2\Z$-
graded abelian group with left $R$-module structure which is compatible with
grading i.e. $R^i M^j \subseteq M^{i + j}$ for all $i,j = 0,1$.
Morphism between these objects is a grade preserving morphism compatible with
action of $R$. Similarly there exists a parity function defined for each
homogeneous element of module $M$ and denoted by the same notation as before. We
can define parity change functor $\Pi : \Mod(R) \to \Mod(R) ; M \mapsto \Pi M$
with $\Z / 2 \Z$ grading given by $(\Pi M)^0 = M^1 \mbox{ and } (\Pi M )^1 =
M^0$. There exists a two exact faithful functors from $\Mod(R)$ as follows,
         \[
           ff : \Mod(R) \to Vect^s_k \mbox{  and  } ff : \Mod(R) \to \Mod(R^0)
\times \Mod(R^0).
         \]
 A canonical right module structure on each left $R$ modules, say $M$, is given
by $ m r := (-1)^{\bar{m} \bar{r}} r m$. Now using this structure we can define
tensor product of two left $R$- modules, say $M_1, M_2$, as quotient of $ M_1
\otimes_{R^0} M_2$ with submodule generated by homogeneous elements $\{ r_1 m_1
\otimes m_2 - (-1)^{\bar{m_1}} m_1 \otimes r_1 m_2 / r_1 \in R^1 , m_i \in M^i
\}$. Here $M_1 \otimes_{R^0} M_2$ is defined as a tensor product of two $\Z / 2
\Z$ graded modules over a commutative ring $R^0$. A commutative constraint is
similar to the case of tensor product of supervector spaces. Note that in
general above two forgetful functors are not tensor functors. Another important
notion in commutative algebra is localisation. It is easy to define localisation
of rings and modules if multiplicative set is contained in centre of a ring.
Hence for super commutative ring we can define localisation at any homogeneous
prime ideal. It is easy to observe that given a $R$ module $M$ with a prime
ideal $\FP$, the localisation $M_{\FP} = 0$ iff $(_{R^0}M)_{\FP} = 0$ (or
$(_{(R/J)}M)_{\FP} = 0 \mbox{ where } J := R \cdotp R^1$.

\subsection{Split Superscheme}

Given any topological space $X$ we can define super ringed space as sheaf of
superring on topological space $X$. We shall denote sheaf of superring with $\Z
/ 2\Z$ grading as $\CO_X = \CO_{X,0} \oplus \CO_{X,1}$. Similarly we can define
sheaf of module and parity change functor $\Pi$ over such a ringed space as
before. We have following definition,
        \begin{definition}
        Given a ringed space $( X , \CO_X)$ is called \emph{superspace} if  ring
$\CO_X(U)$ associated to any open subset $U$ is supercommutative and each stalk
is local ring. A \emph{superspace} is called \emph{superscheme} if further
ringed space $(X , \CO_{X,0})$ is a scheme and $\CO_{X,1}$ is a coherent sheaf
over $\CO_{X,0}$. 
        \end{definition}
 We say that a superscheme is \emph{affine} if the even part of structure sheaf
$(X , \CO_{X,0})$ is affine. It is easy to see that any affine superscheme gives
a super commutative ring. Equivalently an affine superscheme associated to any
super commutative ring can be defined similar to usual affine scheme. Note in
the definition of superscheme the odd part is coherent sheaf of module over the
even part. Therefore if even part of a superscheme is noetherian then we  shall
get the left (or two sided) noetherian superscheme. Given a superscheme $( X ,
\CO_X)$ we can define sheaf of ideal\cite{Manin3} $J_X := \CO_X \cdotp
\CO_{X,1}$. Define $GrX := \oplus_{i \geq 0} J_X^i/J_X^{i + 1}$ where $J_X^0 :=
\CO_X$ and we denote the first term of $Gr X$ as $Gr_0X = \CO_X / J_X$. Now
using these notation we can define structure sheaves of \emph{even} scheme and
\emph{reduced} scheme associated to superscheme $X$ as follows,
        \[
           \CO_{X_{rd}} := Gr_0X \mbox{ and } \CO_{X_{red}} := \CO_X /
\sqrt{J_X} .
        \]
Here $J_X /J_X^2$ is a locally free sheaf of finite rank $0|d$ for some $d$ over
$\CO_{X_{rd}}$. And $Gr X$ is a Grassmann algebra over $\CO_{X_{rd}}$ of locally
free sheaf $J_X /J_X^2$.
Following particular class of superschemes are defined in paper of
Manin\cite{Manin3}.
        \begin{definition}
        A superscheme $(X , \CO_X)$ is called \emph{split} if the graded sheaf
$Gr X$ with mod 2 grading is isomorphic as a locally superringed sheaf to
structure sheaf $\CO_X$.
        \end{definition}
 Manin\cite{Manin3} had also given a way to construct such a \emph{split}
superscheme. If we take purely even scheme $(X , \CO_X)$ and a locally free
sheaf $\CV$ over $\CO_X$ then we can define 
symmetric algebra of odd locally free sheaf $\Pi \CV$, which is denoted $S(\Pi
\CV)$ (see Manin\cite{Manin3}), then $(X , S(\Pi \CV))$ is a split superscheme.
An important example is given by projective superscheme $ \mathbb P^{m|n}$ which
is given by locally free sheaf $\CO(-1)^n$. An example of a nonsplit superscheme
given in Manin\cite{Manin3} is Grassmann superscheme $G(1|1 ,\C^{2|2})$ which is
also an example of non superprojective scheme.
%
%
We can define an abelian category of sheaf of left modules over $\CO_X$, denoted
$\Mod^s(X) \mbox{ or } \Mod(\CO_X)$. As above we have a natural right module
structure given by the Koszul sign rule.
When $(X , \CO_X)$ is affine superscheme given by super ring  $R$ then we can
define the sheaf of module associated to any $R$-module $M$ similar to
commutative case. Hence we can define quasi-coherent and coherent sheaves over
any superscheme. Therefore we shall get two abelian subcategories namely
category of all quasi-coherent sheaves and coherent sheaves. We denote them by
$\qcoh(\CO_X)$ and $\coh(\CO_X)$ respectively. Now similar to above we have
forgetful functor as follows,
        \[
          ff : \Mod(\CO_X) \to \Mod(\CO_{X,0}) \times \Mod(\CO_{X,0}).
        \]
It is an exact faithful functor. we can easily see that 
        \beast
          \qcoh(\CO_X) &=& ff^{-1}(\qcoh(\CO_{X,0}) \times \qcoh(\CO_{X,0}))  \\
          \coh(\CO_X) &=& ff^{-1}(\coh(\CO_{X,0}) \times \coh(\CO_{X,0})).
        \eeast
We can define the tensor product of two sheaves of modules over superscheme
similar to usual scheme. We shall use the canonical identification of sheaf of
left and right modules by Koszul sign rule. Define tensor product of two sheaves
of modules $\CF_1$ and $\CF_2$ as the sheaf associated to pre sheaf given by 
         \[
           U \mapsto (\CF_1 \otimes \CF_2)(U) := \CF_1(U) \otimes_{\CO_X(U)}
\CF_2(U).
         \]
Note that with this definition of tensor structure the commutative constraint is
given by sign rule i.e. $(-1) : \Pi \CF \otimes \Pi \CG \to \Pi \CG \otimes \Pi
\CF$ for purely odd sheaves and identity for other sheaves. Now we can prove
some easy properties of this tensor product by just reducing to affine case,
         \begin{proposition}
         \label{denseness}
         Suppose $(X , \CO_X)$ is a split superscheme and $\CF$ and $\CG$ are
quasi coherent sheaves. 
            \begin{enumerate}
             \item Any $\CO_{X_{rd}}$ quasi coherent sheaf $\CF^0$ is also a
$\CO_X$ quasi coherent sheaf via canonical projection $\CO_X \to \CO_{X_{rd}}$.
Hence we get a functor ${\bf i_{rd}} : \CD_{qc}(X_{rd}) \to \CD_{qc}(X)$. 
             \item The functor ${\bf i_{rd}} $ is a tensor functors and the
images of this functor is tensor ideals in $\CD_{qc}(X)$. The functor ${\bf
i_{rd}}$ is in fact a dominant tensor functor.
             \item $(\Pi \CF) \otimes \CG = \CF \otimes (\Pi \CG) = \Pi(\CF
\otimes \CG)$.
             \end{enumerate}
         \end{proposition}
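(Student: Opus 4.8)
The plan is to reduce all three assertions to the affine case $X=\Spec R$ with $R$ a supercommutative ring, where $\qcoh(\CO_X)$ becomes $\Mod(R)$, the tensor product becomes $\otimes_R$, the functor $\Pi$ becomes the parity shift of $R$-modules, and $\CO_{X_{rd}}$ corresponds to $R/J$ with $J=R\cdot R^1$; since every functor involved is given by a local formula, sheafification then globalises the conclusions. For part (1), the canonical map of sheaves of superrings $\CO_X\twoheadrightarrow\CO_{X_{rd}}$ is surjective, so restriction of scalars along it is an \emph{exact} functor from (quasi-)coherent $\CO_{X_{rd}}$-modules to (quasi-)coherent $\CO_X$-modules; being exact it applies termwise to complexes and descends verbatim to the triangulated functor $\mathbf{i}_{rd}\colon \CD_{qc}(X_{rd})\to\CD_{qc}(X)$ asserted in the statement.

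For part (2), the tensor-functor structure is the comparison morphism $\mathbf{i}_{rd}(\CF^0)\otimes_{\CO_X}\mathbf{i}_{rd}(\CG^0)\to\mathbf{i}_{rd}(\CF^0\otimes_{\CO_{X_{rd}}}\CG^0)$ induced by $\CO_X\to\CO_{X_{rd}}$; on an affine chart this is the canonical isomorphism $M\otimes_R N\simeq M\otimes_{R/J}N$, valid for $R/J$-modules $M,N$ because $MJ=0$ (one must be attentive to whether one works with the sheaf-level tensor product defined above or its derived version, and I would carry out the verification at the sheaf level and then pass to $\CD_{qc}$). That the essential image is a tensor ideal follows because, for any $\CM\in\CD_{qc}(X)$, the ideal sheaf $J_X$ already annihilates the first factor of $\mathbf{i}_{rd}(\CF^0)\otimes_{\CO_X}\CM$ and hence annihilates the whole term-wise computed complex, which is therefore a complex of $\CO_{X_{rd}}$-modules. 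Dominance is where the split hypothesis enters: since $J_X/J_X^2$ is locally free of finite rank $0|d$, the sheaf $J_X$ is locally nilpotent, so the finite filtration $\CO_X\supseteq J_X\supseteq J_X^2\supseteq\cdots$ exhibits $\CO_X$ as an iterated extension — hence built from finitely many distinguished triangles — of the subquotients $J_X^i/J_X^{i+1}$, each a quasi-coherent $\CO_{X_{rd}}$-module lying in the image of $\mathbf{i}_{rd}$; thus the unit $\CO_X=1$ lies in $\lan\mathbf{i}_{rd}(\CD_{qc}(X_{rd}))\ran$, which forces this thick tensor ideal to be all of $\CD_{qc}(X)$.

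Part (3) is a routine local computation: after reducing to $\Spec R$ the asserted isomorphisms $(\Pi\CF)\otimes\CG\simeq\CF\otimes(\Pi\CG)\simeq\Pi(\CF\otimes\CG)$ are the evident grading-shift maps, and the only real work is inserting the Koszul signs so that these maps are $\CO_X$-linear and respect the defining relations of the super tensor product; alternatively one identifies $\Pi(-)$ with $\Pi\CO_X\otimes_{\CO_X}(-)$ and invokes the canonical isomorphism $\Pi\CO_X\otimes_{\CO_X}\Pi\CO_X\simeq\CO_X$. The genuinely substantive step, and the one I would be most careful about, is the dominance claim in (2): it is the only place that actually uses splitness, and it rests on the local nilpotence of $J_X$, so that the filtration of $\CO_X$ by powers of $J_X$ terminates after finitely many steps; the remaining difficulties are bookkeeping — the sign conventions in (3) and the choice of tensor product in the tensor-functor part of (2).
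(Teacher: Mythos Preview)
Your argument is correct and follows the same outline as the paper's: parts (1) and (3) are dispatched as immediate from the definitions, and part (2) proceeds by noting that $J_X$ acts trivially on the image of $\mathbf{i}_{rd}$ (for the tensor-functor and tensor-ideal claims) and then invoking the $J_X$-adic filtration for dominance.

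The one packaging difference worth flagging is in the dominance step. The paper tensors the filtration $\CO_X\supseteq J_X\supseteq J_X^2\supseteq\cdots$ with an \emph{arbitrary} complex $\CF$ to obtain a Postnikov tower whose graded pieces are $\Pi^i\Lambda^i\CV\otimes\CF$, and then invokes part (3) to recognise each piece as lying in the tensor ideal generated by $\mathbf{i}_{rd}$. You instead filter the unit $\CO_X$ itself and conclude that $1\in\langle\mathbf{i}_{rd}(\CD_{qc}(X_{rd}))\rangle$, which is more direct. The paper's route, however, produces a Postnikov tower for general $\CF$ that is reused verbatim later in the paper (in proving that $\CD_{qc}(X)$ is compactly generated), so the extra generality is not wasted; your version is cleaner if one only needs the proposition as stated.
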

         \begin{proof}
         The proofs of \emph{1} and \emph{3} are clear from the definition.
Hence we just indicate the proof of \emph{2}.\\
         \emph{ Proof of 2.} Given any quasi coherent sheaf $\CF$, observe that
${\bf i_{rd}}(\CF)$ has the trivial action of ideal sheaf $J_X$. Therefore by
definition of tensor product it follows that ${\bf i_{rd}}$ is a tensor functor.
Also observe that given a sheaf of $\CO_X$ module, $\CF$, the tensor $\CF
\otimes_{\CO_X} \CO_{X_{rd}}$ has the trivial action of the ideal sheaf $J_X$
and hence it will be in the image of the functor ${\bf i_{rd}}$. Since $(X
,\CO_X)$ is a split superscheme, we have identification of $\CO_X$ with $Gr X$.
The sheaf $Gr X$ is an exterior algebra over purely odd locally free sheaf $\Pi
\CV := J_X / J_X^2$ and each subquotients$J_X^i / J_X^{i + 1}$  can be
identifies with $\Pi^i \Lambda^i \CV$. Hence each subquotients are purely odd or
purely even locally free sheaves. The $\Z$-grading on sheaf $Gr X$ gives a
filtration for structure sheaf $\CO_X$ and hence we have following Postnikov
tower for each complex of quasi coherent sheaf $\CF$,
             \begin{displaymath}
                \xymatrix@C=4pt@M=.1pt{
                          \CF \ar[rd]& & \ar[ll] J_X \otimes \CF &\cdots &
J_X^{n-1} \otimes \CF \ar[rd]& &\ar[ll] J_X^n \otimes \CF \ar@{=}[rd]& \\
                          & \CO_{X_{rd}} \otimes \CF \ar@{-->}[ru] & &\cdots &&
\Pi^{n-1} \Lambda^{n-1} \CV \otimes \CF \ar@{-->}[ru] &  & \Pi^n \Lambda^n \CV
\otimes \CF.
                         }
             \end{displaymath}
         In above tower the lower order term is complex of either purely odd or
purely even sheaves. And using \emph{3}, we have $ \Pi^i \Lambda^i \CV \otimes
\CF = (\Pi^i \CO_{X_{rd}}) \otimes ( \Lambda^i \CV \otimes \CF)$. Therefore the
ideal generated by the image of the functor ${\bf i_{rd}}$ contains the all
lower order term of above Postnikov tower and hence ${\bf i_{rd}}$ is a dominant
tensor functor.
         \end{proof}
 Given a split superscheme $(X , \CO_X = S^.(\Pi \CV) = \Pi \Lambda^.(\CV))$
there is one more forgetful functor as follows,
         \[
           ff : \Mod(\CO_X) \to \Mod(\CO_{X_{rd}}) \times \Mod(\CO_{X_{rd}}) .
         \]
This functor is defined using the obvious inclusion of $\CO_{X_{rd}}$ inside
$\CO_X$ which comes from the definition of split superscheme. For simplicity
take $\Lambda^{\sharp} := \Lambda^{\sharp}(\CV)$ for $\sharp = ev \mbox{ or }
odd $. Suppose $\eta : \Lambda^{\sharp} \otimes \Lambda^{\flat} \to
\Lambda^{\sharp + \flat}$, where tensoring is over $\CO_{X_{rd}}$ which we omit
writing later also and $\sharp + \flat$ is defined evidently, and where $\sharp
= ev \mbox{ or } odd \mbox{ and } \flat = ev \mbox{ or } odd$, represents the
natural multiplication of subsheaves of Grassmann algebra. Given a $\CO_X$
module $\CF = \CF^0 \oplus \CF^1$ we have multiplication structure of $\CO_X$
which can be described using following maps,
         \begin{eqnarray}
         \label{multiplication}
           m^{ev}_i : \Lambda^{ev}  \otimes \CF^i \to \CF^i  \mbox{ and }  
           m^{odd}_i : \Lambda^{odd} \otimes \CF^i \to \CF^{i +1} \mbox{ where }
i \in \Z / 2\Z,
         \end{eqnarray}
 and certain commutative diagrammes, where $i \in \Z / 2\Z$,
         \begin{displaymath}
            \xymatrix{
                     \Lambda^{ev}  \otimes \Lambda^{ev} \otimes \CF^i \ar[r]^-{1
\otimes m^{ev}_i} \ar[d]_{\eta \otimes 1} & \Lambda^{ev} \otimes \CF^i
\ar[d]^{m^{ev}_i}  &  \Lambda^{odd}  \otimes \Lambda^{odd} \otimes \CF^i
\ar[r]^-{1 \otimes m^{odd}_i} \ar[d]_{\eta \otimes 1} & \Lambda^{odd} \otimes
\CF^{i + 1} \ar[d]^{m^{odd}_{i+1}} \\
                     \Lambda^{ev} \otimes \CF^i \ar[r]^{m^{ev}_i}  & \CF^i &
\Lambda^{ev} \otimes \CF^i \ar[r]^{m^{ev}_i}  & \CF^i 
                     }
         \end{displaymath}
         \begin{displaymath}
           \xymatrix{  
                      &  \Lambda^{ev}  \otimes \Lambda^{odd} \otimes \CF^i
\ar[r]^-{1 \otimes m^{odd}_i} \ar[d]_{\eta \otimes 1} & \Lambda^{ev} \otimes
\CF^{i + 1} \ar[d]^{m^{ev}_{i + 1}}     &  \\
                        &  \Lambda^{odd} \otimes \CF^i \ar[r]^{m^{odd}_i} &
\CF^{i +1} & .
                    }
         \end{displaymath}
 It is now easy to see that pair of sheaves $(\CF^0 , \CF^1)$ of $\CO_{X_{rd}}$
modules with maps and commutative diagrammes as above\ref{multiplication} will
give the sheaf of $\CO_X$ module structure on $\CF := \CF^0 \oplus \CF^1$. Using
property of forgetful functor we can prove that any quasi-coherent (resp.
coherent) sheaf of $\CO_X$ module comes from a pair of quasi coherent (resp.
coherent) sheaves of $\CO_{X_{rd}}$ modules with the data of maps and diagrammes
as above\ref{multiplication}. Note also that the Grassmann algebra constructed
from locally free sheaf $\CV$ gives a locally free sheaf of $\CO_{X_{rd}}$
module. Therefore structure sheaf $\CO_X$ is locally free sheaf as a
$\CO_{X_{rd}}$ module.
 
 Similar to usual scheme we can take $\CD(X):= \CD(\Mod(X))$ the derived
category of abelian category $\Mod(X)$. There are various triangulated
subcategories like $\CD_{qc}^{\sharp}(X):= \CD^{\sharp}(\qcoh(X))$ and
$\CD_{coh}^{\sharp}(X) := \CD^{\sharp}(\coh(X))$ where $\sharp = + , - , b
\mbox{ or } \emptyset$. For convenience we shall denote by $\CD^{\sharp}(X^0) :=
\CD^{\sharp}(\Mod(\CO_X^0))$ (resp. $\CD^{\sharp}(X_{rd}) :=
\CD^{\sharp}(\Mod(\CO_{X_{rd}}))$ ) the derived category of modules over purely
even scheme $(X , \CO_X^0)$ (resp. $X_{rd} = (X , Gr_0X)$). Similar notation we
can have for the other subcategories. We shall now define the another important
triangulated subcategory of $\CD_{qc}(X)$.
         \begin{definition}
         Given a complex $\CF^.$ of quasi coherent sheaves of modules over
superscheme $(X , \CO_X)$ is called \emph{strictly perfect} if $\CF^.$ is quasi
isomorphic to bounded complex of locally free coherent sheaf of $\CO_X$ module.
A complex $\CF^.$ is called \emph{perfect} if it is locally quasi isomorphic to
bounded complex of locally free coherent sheaves. 
         \end{definition}
 We shall denote the triangulated subcategory of all perfect complexes as
$\CD^{per}(X) \subseteq \CD_{qc}(X)$. Similar to scheme case we can extend
various functors at the level of these triangulated categories. Hence we can
prove $\CD^{per}(X)$ is a tensor triangulated category with tensor given by
derived functor of usual tensor product defined as above. We can extend the
forgetful functor defined earlier using exactness,
         \[
           ff : \CD^{\sharp}(X) \to \CD^{\sharp}(X^0) \times \CD^{\sharp}(X^0)
\mbox{ and }
           ff : \CD^{\sharp}_{qc}(X) \to \CD^{\sharp}_{qc}(X^0) \times
\CD^{\sharp}_{qc}(X^0) .
         \]
 Here $\sharp \in \{ + , - , b , \emptyset \}$. We can have similar forgetful
functors for the case of coherent sheaves. If we restrict to split superscheme
then we can also have forgetful functor for the case of locally free sheaves (or
vector bundles). Hence for a split superschemes we have following  forgetful
functor for the triangulated subcategory of perfect complexes,
         \[
           ff : \CD^{per}(X) \to \CD^{per}(X_{rd}) \times \CD^{per}(X_{rd})
         \]
Note that this functor may not be a tensor functor.
\subsection{Main Results}

Now we have following result which gives a way to get back quasi coherent
complexes over superscheme with a pair of quasi coherent complexes over
\emph{purely even superscheme} (or \emph{usual scheme}).
         \begin{lemma}
         \label{sheaf diag}
           Given a split superscheme $(X , \CO_X)$, take two quasi coherent
complexes $\CF^0$ and $\CF^1$ over purely even superscheme $X_{rd}$. Suppose we
have maps $m^{ev}_i : \Lambda^{ev}  \otimes \CF^i \to \CF^i  \mbox{ and }
m^{odd}_i : \Lambda^{odd} \otimes \CF^i \to \CF^{i +1}, \mbox{ where } i \in \Z
/ 2\Z$,  with commutative diagrammes as before\ref{multiplication} then $\CF :=
\CF^0 \oplus \CF^1$ has a structure of quasi coherent complex over $X$.
         \end{lemma}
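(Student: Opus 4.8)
The plan is to obtain Lemma~\ref{sheaf diag} as the ``category of complexes'' incarnation of the description of $\Mod(\CO_X)$ recalled just before the statement: for a split superscheme the forgetful functor $ff$ identifies $\Mod(\CO_X)$ with the category whose objects are pairs $(\CF^0,\CF^1)$ of $\CO_{X_{rd}}$-modules equipped with multiplication maps $m^{ev}_i,m^{odd}_i$ as in~\ref{multiplication} subject to the associativity squares displayed there, and whose morphisms are pairs of $\CO_{X_{rd}}$-linear maps commuting with all the $m$'s. An equivalence of abelian categories induces an equivalence on the associated categories of complexes, so a complex of $\CO_X$-modules is the same thing as a complex of such pairs-with-data; and a complex of such pairs is \emph{exactly} the input of the lemma, once one unwinds that a chain map of pairs is a pair of chain maps $m^{ev}_i,m^{odd}_i$ over $X_{rd}$, i.e.\ maps of complexes commuting with the differentials and fitting into the associativity diagrams. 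So the lemma amounts to performing this translation in the reconstruction direction and then checking quasi-coherence.

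To carry this out I would write $\CF^{0,n},\CF^{1,n}$ for the degree-$n$ terms of the two given complexes and $(m^{\sharp}_i)^n$ for the degree-$n$ components of the multiplication maps. Since the associativity squares of~\ref{multiplication} commute as diagrams of complexes, they commute in each cohomological degree $n$, so by the sheaf-level statement recalled above (applied in degree $n$) the sheaf $\CF^n:=\CF^{0,n}\oplus\CF^{1,n}$ carries a structure of $\CO_X$-module with even part $\CF^{0,n}$ and odd part $\CF^{1,n}$. I would then set $d^n_{\CF}:=d^n_{\CF^0}\oplus d^n_{\CF^1}$; this is parity-preserving and satisfies $d^{n+1}_{\CF}\circ d^n_{\CF}=0$ because the same holds in $\CF^0$ and in $\CF^1$. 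It is $\CO_X$-linear: it is $\CO_{X_{rd}}$-linear by construction, and it commutes with multiplication by $\Lambda^{ev}$ and by $\Lambda^{odd}$ precisely because each $m^{ev}_i$ and $m^{odd}_i$ is, by hypothesis, a morphism of complexes, hence commutes with the differentials. Thus $(\CF^n,d^n_{\CF})_{n\in\Z}$ is a complex of $\CO_X$-modules whose underlying graded object is $\CF^0\oplus\CF^1$.

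It remains to see that $\CF$ is quasi-coherent, i.e.\ that each $\CF^n$ is a quasi-coherent $\CO_X$-module, and I would check this locally: on an affine open $\CF^{0,n},\CF^{1,n}$ correspond to quasi-coherent $\CO_{X_{rd}}$-modules, and since $\CO_X$ is locally free of finite rank over $\CO_{X_{rd}}$ (being the Grassmann algebra of the finite-rank locally free sheaf $\CV$) the $\CO_X$-module $\CF^n$ is quasi-coherent --- equivalently $ff(\CF^n)\in\qcoh(\CO_{X_{rd}})\times\qcoh(\CO_{X_{rd}})$, which is the criterion for quasi-coherence in the split case. As every step of the construction is canonical and local on $X$, the local descriptions glue and $\CF=\CF^0\oplus\CF^1$ is a complex of quasi-coherent $\CO_X$-modules.

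I do not anticipate a genuine obstacle here: the content is the bookkeeping observation that commutativity of the associativity squares \emph{as diagrams of complexes} simultaneously encodes (i) a sheaf-of-$\CO_X$-module structure on each term $\CF^n$ and (ii) the compatibility of these structures with the differentials that makes $d_{\CF}$ an $\CO_X$-linear differential and not merely a degreewise family of maps. The one point that wants care is to use the hypothesis ``$m^{ev}_i,m^{odd}_i$ are morphisms of complexes'' in the strong, differential-commuting sense, as that is exactly what promotes the degreewise module structures to an honest complex of $\CO_X$-modules; one should also note that the reconstruction just performed is inverse to the degreewise application of $ff$, so the correspondence is a bijection on isomorphism classes and nothing is over-counted.
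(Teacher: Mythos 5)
Your proof is correct and follows essentially the same approach as the paper's one-line argument, which says that since $\CV$ is locally free, giving a multiplication structure at the level of complexes is the same as giving a complex of quasi-coherent sheaves with such a multiplication structure. You have simply unwound the equivalence-of-categories observation degree by degree, which is exactly what that sentence compresses.
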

         \begin{proof}
            Since $\CV$ is locally free sheaf therefore giving a multiplication
structure at the level of complexes is same as giving a complex of  quasi
coherent sheaves with such a multiplication structure.
         \end{proof}
 Now similar to usual scheme we can define \emph{support} of a quasi coherent
sheaf as a closed subset of $X$ containing all super prime ideals where stalk of
the sheaf is nonzero. Since nontriviality of a stalk at any point $\FP$ is a
local property we can check it in an affine open set containing $\FP$. Now from
earlier observation $\CF_{\FP} = 0$ iff $\CF_{\FP}^0 = 0 = \CF_{\FP}^1$ as a
stalk of a sheaf of $\CO_{X_{rd}}$ modules $\CF^0$ and $\CF^1$. Therefore for a
quasi coherent sheaf $\CF$ we have $ supp (\CF) = supp ( ff (\CF)) = supp(\CF^0)
\cup supp(\CF^1)$. Now the assignment of support can be extended to derived
category as follows,
         \[
           supph(\CF^.) := \cup_{i \in \Z}  \CH^i(\CF^.) .
         \]
This association can be restricted to thick subcategory $\CD^{per}(X)$. As
forgetful functor is an exact functor  we have following relation of support
similar to sheaf case,
         \[
           supph(\CF^.) = supph (ff(\CF^.)) = supph(\CF^0) \cup supph(\CF^1)
         \]
Using this property of support we can prove following result,
         \begin{lemma}
            The pair $(X , supph)$ defined as above gives a support data on a
triangulated category $\CD^{per}(X)$. 
         \end{lemma}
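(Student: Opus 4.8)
The plan is to verify, one at a time, the axioms of a support data in Balmer's sense \cite{PB2} for the pair $(X,\supph)$ on the tensor triangulated category $\CD^{per}(X)$: writing $\sigma=\supph$ and $\mathbb 1=\CO_X$, one needs each $\sigma(\CF)$ closed in $X$, and $\sigma(0)=\varnothing$, $\sigma(\CO_X)=X$, $\sigma(\CF[1])=\sigma(\CF)$, $\sigma(\CF\oplus\CG)=\sigma(\CF)\cup\sigma(\CG)$, $\sigma(\CF_2)\subseteq\sigma(\CF_1)\cup\sigma(\CF_3)$ for every distinguished triangle $\CF_1\to\CF_2\to\CF_3\to\CF_1[1]$, and $\sigma(\CF\otimes\CG)=\sigma(\CF)\cap\sigma(\CG)$.

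First I would dispose of everything not involving the tensor product. From the definition $\supph(\CF^{\cdot})=\bigcup_{i\in\Z}\supp\CH^i(\CF^{\cdot})$ one gets $\supph(0)=\varnothing$ and shift-invariance immediately, and additivity from $\CH^i(\CF\oplus\CG)=\CH^i(\CF)\oplus\CH^i(\CG)$; closedness follows because each $\CH^i(\CF)$ is a coherent $\CO_X$-module for $\CF$ perfect, so $\supph(\CF)$ is a finite union of closed sets $\supp\CH^i(\CF)$. For the unit, $\supph(\CO_X)=\supp\CH^0(\CO_X)=\supp(\CO_X)=X$ since every stalk $\CO_{X,\FP}$ is a non-zero local superring. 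For the cone axiom I would use that $\CH^i(-)$ is a homological functor into the abelian category $\Mod(\CO_X)$, so a distinguished triangle yields a long exact sequence of cohomology sheaves, whence $\supp\CH^i(\CF_2)\subseteq\supp\CH^i(\CF_1)\cup\supp\CH^i(\CF_3)$; taking the union over $i$ gives the claim. (Equivalently one may push the triangle through the exact forgetful functor $ff:\CD^{per}(X)\to\CD^{per}(X_{rd})\times\CD^{per}(X_{rd})$ and invoke the identity $\supph(\CF)=\supph(ff(\CF))=\supph(\CF^0)\cup\supph(\CF^1)$ together with the classical statement on the ordinary scheme $X_{rd}$, cf. Thomason \cite{Thomason2}, Balmer \cite{PB2}.)

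The substantial point is the tensor axiom, where the forgetful functor is of no direct use since it is not a tensor functor, so I would argue on stalks. The inclusion $\supph(\CF\otimes\CG)\subseteq\supph(\CF)\cap\supph(\CG)$ is formal: for a homogeneous prime $\FP$ one has $(\CF\otimes\CG)_{\FP}\simeq\CF_{\FP}\otimes^{L}_{\CO_{X,\FP}}\CG_{\FP}$, which vanishes as soon as one of $\CF_{\FP},\CG_{\FP}$ does. For the reverse inclusion, fix $\FP$ with $\CF_{\FP}\not\simeq 0$ and $\CG_{\FP}\not\simeq 0$ and put $R=\CO_{X,\FP}$, a local superring with maximal homogeneous ideal $\mathfrak m$. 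Since $X$ is split, $\CO_X$ is locally the exterior algebra over $\CO_{X_{rd}}$ on the finite-rank sheaf $\CV$, so the ideal sheaf $J_X$ is locally nilpotent; as $J_X\subseteq\mathfrak m$, the residue ring $\kappa:=R/\mathfrak m=\CO_{X_{rd},\FP}/\mathfrak m_{rd}$ is an ordinary field. The perfect complexes $\CF_{\FP},\CG_{\FP}$ over $R$ admit finite free (super)resolutions with minimal differentials, so by the graded Nakayama lemma $\CF_{\FP}\otimes^{L}_R\kappa\not\simeq 0$ and $\CG_{\FP}\otimes^{L}_R\kappa\not\simeq 0$; then, by associativity of the derived tensor and $\kappa\otimes^{L}_\kappa(-)=\kappa\otimes_\kappa(-)$,
\[
(\CF_{\FP}\otimes^{L}_R\CG_{\FP})\otimes^{L}_R\kappa\;\simeq\;(\CF_{\FP}\otimes^{L}_R\kappa)\otimes_{\kappa}(\CG_{\FP}\otimes^{L}_R\kappa)
\]
is non-zero by the Künneth formula over the field $\kappa$. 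Hence $\CF_{\FP}\otimes^{L}_R\CG_{\FP}\not\simeq 0$, i.e.\ $\FP\in\supph(\CF\otimes\CG)$.

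I expect this last inclusion $\supph(\CF)\cap\supph(\CG)\subseteq\supph(\CF\otimes\CG)$ to be the only real obstacle: one must check that minimal free resolutions and the Nakayama argument go through over the local superring $R$ (Koszul signs and the canonical right-module structure) and that the Künneth isomorphism above is legitimate in the $\Z/2\Z$-graded setting. Once the local nilpotence of $J_X$ is used to collapse the super residue ring to an honest field, the rest is the familiar computation for perfect complexes on a scheme, and all the remaining axioms are routine reductions through cohomology sheaves (or through $ff$) to the classical statements for $X_{rd}$.
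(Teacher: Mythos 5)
Your treatment of (SD1)--(SD4) is the same as the paper's: both rely on the exact forgetful functor and the identity $\supph(\CF^{\cdot})=\supph(ff(\CF^{\cdot}))$ (your direct route via cohomology sheaves is equivalent). For the tensor axiom (SD5), however, you take a genuinely different route. The paper reduces to the affine case, observes a perfect complex is strictly perfect (a bounded complex of projectives), then invokes an unexplained ``induction on lengths of complexes'' to pass to two modules $M,N$ and argues via annihilators that $\mathrm{ann}(m)\cap(R\setminus\FP)=\varnothing=\mathrm{ann}(n)\cap(R\setminus\FP)$ forces $\mathrm{ann}(m\otimes n)\cap(R\setminus\FP)=\varnothing$. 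You instead run the standard fiber argument: localize at the homogeneous prime $\FP$, use nilpotence of $J_X$ to collapse $R/\mathfrak{m}$ to an honest field $\kappa$, minimalize the bounded free (super)complexes, invoke graded Nakayama to get $\CF_{\FP}\otimes^{L}_{R}\kappa\neq 0\neq\CG_{\FP}\otimes^{L}_{R}\kappa$, and conclude by associativity of derived tensor plus K\"unneth over $\kappa$. Your version is both cleaner and more solid: the paper's ``induction on lengths'' is not a formal reduction (the cohomology of a tensor complex is not controlled degree-by-degree without a K\"unneth hypothesis), and the annihilator assertion is false for arbitrary modules --- it implicitly uses freeness of finitely generated projectives over the local (super)ring, which your residue-field computation makes explicit. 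The points you flag for verification --- minimal complexes and graded Nakayama over a noetherian local superring, Koszul signs in the K\"unneth isomorphism --- do go through (finitely generated projective $=$ free by lifting homogeneous generators from $\kappa$, and the usual minimalization of a bounded complex of finite free modules applies verbatim), so your proof is correct.
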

         \begin{proof}
            Since forgetful functor is an exact functor and we have equality
$(\CF^.) = supph(ff(\CF^.))$ therefore the support data properties (SD 1)-(SD
4)\cite{PB2} are easy to prove. We shall just prove (SD 5) here. Again checking
nontriviality of stalk is a local question, we can assume that $X$ is an affine
superscheme. First we observe that any perfect complex $\CF^.$ is just a strict
perfect and hence bounded complex of projective modules. Therefore using
induction on lengths of complexes we can reduce to proving the statement for a
modules $M$ and $N$. Since trivially $M_{\FP} = 0$ or $N_{\FP} = 0$ gives
$M_{\FP} \otimes N_{\FP} = 0$, it is enough to prove $ M_{\FP} \neq 0 \mbox{ and
} N_{\FP} \neq 0 \Rightarrow M_{\FP} \otimes N_{\FP} \neq 0$. But by taking two
elements $m \in M$ and $n \in N$ with $ann(m) \cap (R - \FP) = ann(n) \cap (R -
\FP) \emptyset$ we can easily see that $ ann(m \otimes n) \cap (R - \FP)=
\emptyset$. Hence we have $ M_{\FP} \otimes N_{\FP} \neq 0$ and this will prove
(SD 5).
          \end{proof}
 We shall prove now that above support data is in fact a classifying support
data as defined in Balmer\cite{PB2}. We need following classification of thick
tensor subcategories\cite{PB2} of $\CD^{per}(X)$ which we prove by relating it
with the usual scheme case.
         \begin{proposition}
            Given a split superscheme $(X , \CO_X)$ we have a following
bijection,
            \[
              \theta : \{ Y \subset X | Y \mbox{ specialisation closed}\}
\xrightarrow{\sim} \{ \CI \subset \CD^{per}(X) | \CI \mbox{ radical thick
$\otimes$-ideal} \}
            \]
            defined by $ Y \mapsto \{ \CF^. \in \CD^{per}(X) | supph(\CF^.)
\subset Y \}$, with inverse, say $\eta$, $\CI \mapsto supph(\CI):= \cup_{\CF^.
\in \CI} supph(\CF^.)$.
         \end{proposition}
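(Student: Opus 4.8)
The strategy is to adapt, essentially verbatim, Thomason's classification of thick tensor ideals \cite{Thomason2} in the form packaged by Balmer \cite{PB2}, pushing every computation of homological support down to the underlying noetherian scheme $X_{rd}$ by means of the faithful exact forgetful functor $ff$. I use throughout that $X$ and $X_{rd}$ have the same underlying (noetherian) topological space, so that ``specialisation closed subset of $X$'' is unambiguous and coincides with ``Thomason subset''. First I would dispose of well-definedness: $\eta(\CI)$ is specialisation closed, being a union of the closed sets $\supph(\CF^\bullet)$; and $\theta(Y)$ is a thick $\otimes$-ideal by the support-data axioms (SD 1)--(SD 5) verified in the preceding lemma, and is radical because (SD 5) gives $\supph((\CF^\bullet)^{\otimes n}) = \supph(\CF^\bullet)$, so that $(\CF^\bullet)^{\otimes n}\in\theta(Y)$ forces $\CF^\bullet\in\theta(Y)$. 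It then remains to prove $\eta\circ\theta = \mathrm{id}$ and $\theta\circ\eta = \mathrm{id}$.

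For $\eta\circ\theta=\mathrm{id}$ the inclusion $\supph(\theta(Y))\subseteq Y$ is immediate, and for the reverse it suffices, $Y$ being specialisation closed in the noetherian space $X$, to produce for each $x\in Y$ a perfect complex on $X$ with homological support exactly $\overline{\{x\}}$. The cleanest route is to take a perfect complex $P$ on the ordinary noetherian scheme $X_{rd}$ with $\supph_{X_{rd}}(P)=\overline{\{x\}}$ --- a Koszul complex on an affine patch, globalised the standard way --- and transport it along $P\mapsto P\otimes_{\CO_{X_{rd}}}\CO_X$. This lands in $\CD^{per}(X)$ because $\CO_X$ is locally free of finite rank over $\CO_{X_{rd}}$ (here the split hypothesis enters), and it has the same homological support because $\CO_{X,\FP}$ is faithfully flat over $\CO_{X_{rd},\FP}$ at every point. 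Equivalently one can work directly on a super-affine open $\Spec R$, $R=R^0\oplus R^1$: with $x$ corresponding to $\FP^0\in\Spec R^0$ choose $f_1,\dots,f_m\in R^0$ cutting out $V(\FP^0)$ and take $K_R(f_1,\dots,f_m)$, whose support --- computed on $X_{rd}$ by the previous lemmas --- is $V(f_1,\dots,f_m)$.

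For $\theta\circ\eta=\mathrm{id}$ the inclusion $\CI\subseteq\theta(\eta(\CI))$ is trivial, and the reverse is the content. Given $\CF^\bullet$ with $\supph(\CF^\bullet)\subseteq\eta(\CI)=\bigcup_{\CG^\bullet\in\CI}\supph(\CG^\bullet)$, quasi-compactness of $\supph(\CF^\bullet)$ together with $\supph(\CG_1^\bullet\oplus\cdots\oplus\CG_n^\bullet)=\bigcup_i\supph(\CG_i^\bullet)$ reduces matters to the key lemma: $\supph(\CF^\bullet)\subseteq\supph(\CG^\bullet)$ implies $\CF^\bullet\in\lan\CG^\bullet\ran$. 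This I would prove by the Thomason--Balmer noetherian induction on the closed set $\supph(\CF^\bullet)$, whose two engines transfer without change from the commutative case. First, for any closed $Z\subseteq X$ the thick $\otimes$-ideal $\CD^{per}_Z(X)$ of complexes supported on $Z$ is generated by Koszul complexes $K_R(f_1,\dots,f_m)$ with $f_i$ even: the usual argument applies, since $\CF^\bullet[1/f_i]=0$ forces $f_i$ to act nilpotently on the compact object $\CF^\bullet$, whence $\CF^\bullet\in\lan K_R(f_i)\ran$, and one intersects over $i$ --- the odd variables play no role. Second, the generalised localisation theorem of Neeman \cite{Neeman1}, identifying $\CD^{per}(U)$ with the idempotent completion of $\CD^{per}(X)/\CD^{per}_{X\setminus U}(X)$ for $U\subseteq X$ open, holds for the superscheme $X$. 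With these in hand Thomason's induction proceeds verbatim: peel off a generic point $\xi$ of a component of $\supph(\CF^\bullet)$, absorb the restriction of $\CF^\bullet$ to $X\setminus\overline{\{\xi\}}$ into $\CI$ by the inductive hypothesis applied on the localisation, and absorb the part over $\overline{\{\xi\}}$ using a Koszul object of $\lan\CG^\bullet\ran$ supported on $\overline{\{\xi\}}$; this yields $\CF^\bullet\in\CI$.

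The main obstacle is therefore bookkeeping rather than idea: one must verify that every step of the classical argument --- the supports of Koszul complexes, the nilpotence/compactness manipulations, and above all Neeman's localisation theorem --- survives the presence of the odd part of $\CO_X$. For the first two this is guaranteed by the identity $\supph(-)=\supph(ff(-))$, that is, by the fact that homological support is detected on the ordinary noetherian scheme $X_{rd}$; the localisation theorem is the one place where one genuinely invokes Neeman's machinery in the super setting, exactly as Balmer does in the classical case, and this is the step most in need of careful justification.
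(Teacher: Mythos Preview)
Your argument is correct, but for the hard direction $\theta\circ\eta(\CI)\subseteq\CI$ you take a genuinely different route from the paper. You propose to rerun the full Thomason noetherian induction inside $\CD^{per}(X)$, which forces you to invoke Neeman's localisation theorem for the superscheme (the identification of $\CD^{per}(U)$ with the idempotent completion of $\CD^{per}(X)/\CD^{per}_{X\setminus U}(X)$); in the paper that result is established only \emph{after} the present proposition. The paper instead short-circuits the induction entirely: given $\supph(\CF^\bullet)\subseteq\supph(\CG^\bullet)$ with $\CG^\bullet\in\CI$, one tensors both complexes with $\CO_{X_{rd}}$, applies the classical Thomason classification on the ordinary scheme $X_{rd}$ to conclude $\CF^\bullet\otimes\CO_{X_{rd}}\in\lan\CG^\bullet\otimes\CO_{X_{rd}}\ran\subseteq\CI$, and then uses that $\CO_{X_{rd}}$ lies in no prime (because ${\bf i_{rd}}$ is dominant, Proposition~\ref{denseness}) together with the fact that a radical ideal is the intersection of the primes containing it, to deduce $\CF^\bullet\in\CI$. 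This is shorter, uses the classical result as a black box, and --- crucially for the paper's logical order --- does not require the super localisation theorem. Your approach, by contrast, is more self-contained once the localisation machinery is in place and makes explicit that Thomason's argument transports verbatim; the cost is that you must verify that machinery independently, which you correctly flag as the main burden. A minor further difference: for $\eta\circ\theta=\mathrm{id}$ you produce perfect complexes with prescribed support by extending scalars along $\CO_{X_{rd}}\hookrightarrow\CO_X$, whereas the paper restricts along the projection $\CO_X\to\CO_{X_{rd}}$; your route is arguably cleaner here, since $\CO_X$ is visibly locally free over $\CO_{X_{rd}}$ in the split case.
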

         \begin{proof}
            Using support data properties (SD 1) - (SD 5) we can prove that
$\theta(Y)$ is a radical thick tensor ideal and hence the map $\theta$ is well
defined. To prove that $\eta(\CI)$ is a specialisation closed subset it is
enough to prove that for any $y \in \eta(\CI)$ there is a closed set containing
this point. By definition $y$ is in homological support of some object $\CF^.
\in \CI$. Hence $y \in supph(ff(\CF^.))$ which is a closed subset.\\
            It is easy to check that $ \eta \circ \theta ( Y ) \subseteq Y $ and
$ \CI \subseteq \theta \circ \eta ( \CI ) $. To prove that $ Y \subseteq \eta
\circ \theta ( Y ) $ it is enough to say that for any closed subset $ Z $ there
exists an object with support $ Z $. But there exists a $\CO_{X_{rd}}$  perfect
sheaf with support $Z$ and hence via natural map $\CO_X \to \CO_{X_{rd}}$ we get
a perfect sheaf with support $Z$.\\
            Finally to prove that $ \theta \circ \eta ( \CI ) \subseteq \CI $ it
is enough to prove that for any $\CF^. \in \theta \circ \eta (\CI)$ the object
$\CF^. \in \CI$. Now following proof of theorem 3.15 of Thomason\cite{Thomason2}
it reduces to proving that $\supph (\CF^.) \subseteq \supph(\CG^.)$ for some
object of $\CG^. \in \CI$ then $\CF^. \in \CI$. But $\CF^. \otimes \CO_{X_{rd}}$
will be in thick tensor ideal generated by $\CG^. \otimes \CO_{X_{rd}}$ as there
is a dominant tensor inclusion of $\CD^{per}(X_{rd})$ in
$\CD^{per}(X)$\ref{denseness}. This will prove $\CF^. \otimes \CO_{X_{rd}} \in
\CI$. But $\CI$ is intersection of prime ideal containing $\CI$ and
$\CO_{X_{rd}}$ is not in any prime ideal and hence $\CF^. \in \CI$   
         \end{proof}
With this result it follows that  $(X , supph)$ is a classifying support data on
a tensor triangulated category $\CD^{per}(X)$ as other properties mentioned in
Balmer\cite{PB2} is clearly holds for $X$. Using Theorem 5.2 of Balmer\cite{PB2}
we get following corollary,
         \begin{corollary}
         \label{top spc}
          The canonical map $f : X \to \Spc(\CD^{per}(X))$ given by $x \mapsto
\{ \CF^. \in \CD^{per}(X) | x \notin supph(\CF^.) \}$ is a homeomorphism.
         \end{corollary}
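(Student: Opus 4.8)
The plan is to obtain the statement as a direct application of Balmer's reconstruction theorem (Theorem 5.2 of \cite{PB2}): once $(X,\supph)$ is known to be a \emph{classifying} support data on $\CD^{per}(X)$ and $X$ is checked to satisfy the topological hypothesis of that theorem, the canonical comparison map $x \mapsto \{\CF^\bullet \in \CD^{per}(X) \mid x \notin \supph(\CF^\bullet)\}$ --- which is precisely the map $f$ in the statement --- is automatically a homeomorphism. Thus the real content has already been extracted in the two preceding results, and what is left is essentially bookkeeping.

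First I would collect the inputs already in place. The lemma above gives that $(X,\supph)$ is a support data, i.e.\ that $\supph$ satisfies (SD 1)--(SD 5): axioms (SD 1)--(SD 4) follow at once from the identity $\supph(\CF^\bullet) = \supph(ff(\CF^\bullet)) = \supph(\CF^0) \cup \supph(\CF^1)$ together with exactness of the forgetful functor $ff$, while (SD 5) is the multiplicativity proved by reduction to the affine case and then to a pair of modules. The proposition above then provides the inclusion-preserving bijection $\theta$ between specialisation-closed subsets of $X$ and radical thick $\otimes$-ideals of $\CD^{per}(X)$, with $\supph(-)$ as inverse; this is exactly the statement that the support data $(X,\supph)$ is classifying.

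Next I would verify the remaining hypotheses of Theorem 5.2 of \cite{PB2}. One is that every object of $\CD^{per}(X)$ has closed support, which is immediate since $\supph(\CF^\bullet)$ equals the closed set $\supph(ff(\CF^\bullet))$ in the scheme $X_{rd}$. The other is that the underlying space of $X$ be topologically noetherian; this holds because $X$, $X_{rd}$ and $X_0$ share the same underlying space, which is noetherian once $\CO_{X,0}$ is, and it is the natural standing assumption here --- matching Balmer's reconstruction theorem for ordinary schemes recalled earlier, and consistent with the use of specialisation-closed (rather than Thomason) subsets in the preceding proposition. With these in hand, Theorem 5.2 of \cite{PB2} applies and yields that $f$ is a homeomorphism.

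I expect no real obstacle in this corollary itself. The genuine difficulty lies in the preceding proposition, specifically in the inclusion $\theta \circ \eta(\CI) \subseteq \CI$: there one passes from $\CF^\bullet$ to $\CF^\bullet \otimes \CO_{X_{rd}}$, invokes Thomason's classification \cite{Thomason2} on the ordinary scheme $X_{rd}$ via the dominant tensor inclusion $\mathbf{i_{rd}} \colon \CD^{per}(X_{rd}) \hookrightarrow \CD^{per}(X)$ of Proposition \ref{denseness}, and then removes the twist using that $\CO_{X_{rd}}$, being the image of the unit under a dominant tensor functor, generates $\CD^{per}(X)$ and hence lies in no prime ideal. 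Once that is granted, the present corollary follows formally from Balmer's machinery, modulo the routine checks listed above.
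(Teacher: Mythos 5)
Your proposal matches the paper's argument exactly: the corollary is presented there as an immediate consequence of the preceding lemma and proposition (which establish that $(X,\supph)$ is a classifying support data) combined with Theorem 5.2 of Balmer \cite{PB2}. You have simply spelled out the routine hypothesis checks (closed supports, topological noetherianity) that the paper compresses into ``other properties \ldots clearly hold.''
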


Now we shall prove the localisation theorem similar to Thomason for split
superscheme case by using the generalisation of Thomason result proved by
Neeman\cite{Neeman3}. First we recall some notations. Given a closed subset $Z$
of $X$ we can define full triangulated subcategory $\CD_{qc , Z}(X) \subseteq
\CD_{qc}(X)$ containing all objects with homological support contained in closed
subset $Z$. Suppose $U$ is an open complement of closed subset $Z$. There is a
canonical restriction functor $j^* : \CD_{qc}(X) \to \CD_{qc}(U)$ and clearly it
will be trivial functor on thick subategory $\CD_{qc , Z}(X)$. Now using
forgetful functor we have following commutative diagram,
         \[
           \xymatrix@C 1 pc {
                     \CD_{qc , Z}(X) \ar[r] \ar[d]^{ff} & \CD_{qc}(X)
\ar[r]^{j^*} \ar[d]^{ff} & \CD_{qc}(U) \ar[d]^{ff} \\
                     \CD_{qc , Z}(X_{rd}) \times \CD_{qc , Z}(X_{rd}) \ar[r] & 
\CD_{qc}(X_{rd}) \times \CD_{qc}(X_{rd}) \ar[r] &  \CD_{qc}(U_{rd}) \times
\CD_{qc}(U_{rd}). 
                    }
         \]

 We have following result which we shall prove using usual scheme case as
before,
         \begin{proposition}
         \label{local}
          The canonical functor induced from the functor $j^*$, say $j^*
:\CD_{qc}(X) / \CD_{qc , Z}(X) \xrightarrow{\sim} \CD_{qc}(U) $ is an
equivalence.
         \end{proposition}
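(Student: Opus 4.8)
The plan is to carry over the proof of the Thomason--Neeman localisation theorem for ordinary quasi-compact separated schemes, transporting the required structural facts through the forgetful functor $ff$. Recall that $ff$ is exact and conservative: since $\CF_{\FP}=0$ if and only if $\CF^0_{\FP}=0=\CF^1_{\FP}$, the functor $ff$ reflects the zero object, hence reflects isomorphisms and detects homological support, and it reflects quasi-coherence by the identifications recorded above. Moreover the square relating $\CD_{qc}(X)$ to $\CD_{qc}(X_{rd})\times\CD_{qc}(X_{rd})$, the functors $j^*$, and the $Z$-supported subcategories commutes, as displayed just before the statement. So it suffices to establish three abstract facts about $j^*$ and feed in the scheme-level result of Neeman\cite{Neeman3} on each factor $\CD_{qc}(X_{rd})$.

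First I would produce a right adjoint to $j^*$, namely the derived pushforward $j_*:=Rj_*\colon\CD_{qc}(U)\to\CD(X)$ along the open immersion $j\colon U\hookrightarrow X$. One checks that $ff\circ j_*\simeq(j_*\times j_*)\circ ff$, since at the level of sheaves pushforward simply forgets the $\CO_X$-module structure compatibly with the even/odd decomposition; because $Rj_*$ preserves quasi-coherence on the underlying ordinary scheme $X_{rd}$ (Neeman\cite{Neeman3}) and $ff$ reflects quasi-coherence, we get $j_*(\CD_{qc}(U))\subseteq\CD_{qc}(X)$. As $j$ is an open immersion the counit $j^*j_*\to\Id$ is an isomorphism, so $j_*$ is fully faithful and $j^*$ is essentially surjective onto $\CD_{qc}(U)$. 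Note that $\ker(j^*)=\CD_{qc,Z}(X)$ already holds by the definition of $\CD_{qc,Z}(X)$ and the locality of homological support.

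Finally, for each $\CF\in\CD_{qc}(X)$ complete the unit $\CF\to j_*j^*\CF$ to a distinguished triangle $\CF\to j_*j^*\CF\to C\to\CF[1]$ in $\CD_{qc}(X)$; the middle term is quasi-coherent by the previous step, hence so is $C$. Applying $j^*$ and using the triangle identity together with the invertibility of the counit shows $j^*C\simeq0$, i.e. $\supph(C)\subseteq Z$, so $C\in\CD_{qc,Z}(X)$. Thus $j^*$ is a triangulated functor with a fully faithful right adjoint whose kernel is exactly $\CD_{qc,Z}(X)$ and whose units have cones in $\CD_{qc,Z}(X)$, and by the standard Verdier/Bousfield localisation criterion --- the content of the generalised Thomason localisation theorem of Neeman\cite{Neeman3} in this abstract form --- the induced functor $\bar{j}^*\colon\CD_{qc}(X)/\CD_{qc,Z}(X)\to\CD_{qc}(U)$ is an equivalence. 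The main obstacle is the quasi-coherence of $Rj_*$ in the super setting: this is precisely where the quasi-compact, separated (indeed noetherian) hypothesis on the underlying scheme and the ordinary-scheme result are needed, imported via $ff$; everything else is formal adjunction bookkeeping once this point is settled.
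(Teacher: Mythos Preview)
Your argument is correct but proceeds along a genuinely different line from the paper's. The paper proves the equivalence by hand: it checks faithfulness, fullness and essential surjectivity of the induced functor on the Verdier quotient by working with explicit roofs $[\CF\leftarrow\CF'\to\CG]$, pushing them through $ff$ to the known equivalence $\CD_{qc}(X_{rd})/\CD_{qc,Z}(X_{rd})\simeq\CD_{qc}(U_{rd})$, and then lifting the multiplicative data $(m^{ev}_i,m^{odd}_i)$ back to $\CO_X$-module structures via Lemma~\ref{sheaf diag}. You instead construct the right adjoint $Rj_*$, verify through $ff$ that it lands in $\CD_{qc}(X)$, observe that the counit is invertible and the unit has cone in $\CD_{qc,Z}(X)$, and conclude by the abstract Bousfield/Verdier localisation criterion. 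Your route is more conceptual and sidesteps the repeated diagram-chasing needed to transport the super multiplication along roofs; its only extra obligation is the compatibility $ff\circ Rj_*\simeq(Rj_*\times Rj_*)\circ ff$, which holds because $\CO_X$ is locally free over $\CO_{X_{rd}}$ so that $ff$ has an exact left adjoint and therefore preserves injectives. The paper's approach, by contrast, uses only the underived $j^*$ together with the ordinary-scheme localisation theorem as a black box, trading the adjoint machinery for more explicit bookkeeping with Lemma~\ref{sheaf diag}.
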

         \begin{proof}
         We shall prove that $j^*$ is fully faithful and essentially surjective.
Recall that the quotient functor $j^*$ gives a map between morphisms as
follows, 
         \[
           [\CF \leftarrow^s \CF' \rightarrow^a \CG ] \mapsto [j^*\CF
\leftarrow^{j^* s} j^*\CF' \rightarrow^{j^*a} j^*\CG].
         \]
         Now using the forgetful functor we can get a similar map between
morphisms of $\CO_{X_{rd}}$ perfect complexes.\\
         To prove faithfulness suppose there exists a morphism $\tilde{t} :
\tilde{\CF''} \to \CF' $ with $ a \circ \tilde{t} = 0 $ and cone of $\tilde{t}$
is an object of $\CD_{qc , Z}(X)$. Since we have equivalence of functor $j^*$
after applying forgetful functor therefore there exists an object $\CF' :=
\CF''^0 \oplus \CF''^1$ and a map $t:= t^0 \oplus t^1 : \CF'' \to \CF'$ with
$ff(a) \circ t = 0$. Now  using lemma\ref{sheaf diag} it is enough to prove that
$\CF''$ has multiplication structure and the map $t$ is compatible with it.
Again using the fullness of $ff(j^*)$, the multiplicative structure on
$\tilde{\CF''}$ can be lifted to $\CF''$. Now using faithfulness of $ff(j^*)$ we
get the commutativity of various digram for multiplicative structure. Since
$\tilde{t}$ is compatible with multiplicative structure on $\tilde{\CF''}$,
again faithfullness gives compatibility of the multiplicative structure  on
$\CF''$ with the map $t$. Since support doesn't change under forgetful functor
we get the cone of $t$ as an object of $\CD_{qc , Z}(X)$ and $ a \circ t = 0$.\\
         We shall use similar idea as above to prove fullness of the functor
$j^*$. So given any map $ [j^*\CF \leftarrow^{\tilde{s}} \tilde{\CF'}
\rightarrow^{\tilde{a}} j^*\CG] $ we want it to be image of some map $[\CF
\leftarrow^s \CF' \rightarrow^a \CG ]$  under $j^*$. But there exists a map $
[ff(\CF) \leftarrow^s \CF'^0\oplus \CF'^1 \rightarrow^a ff(\CG) ]$ which maps to
above map via $ff(j^*)$. As above using lemma\ref{sheaf diag} it is enough to
give multiplicative structure on $\CF' := \CF'^0 \oplus \CF'^1$ which is
compatible with maps $s$ and $a$. Now using fullness of the functor $ff(j^*)$ we
can prove existence of multiplication map and using faithfulness we can see
commutativity of various diagram to lift multiplicative structure from
$\tilde{\CF'}$. Once again using faithfulness of $ff(j^*)$ we can lift the
compatibility of multiplicative structure with the maps $s$ and $a$.\\
         Now to prove essential surjectivity of functor $j^*$, we start with an
object $\tilde{\CF} \in \CD_{qc}(U)$. Since $ff(j^*)$ is essentially surjective
we get an isomorphism of $ff(\tilde{\CF})$ with an object $\tilde{\CF'^0} \oplus
\tilde{\CF'^1}$ which is an image of the object $\CF'^0 \oplus \CF'^1$ via the
functor$ff(j^*)$. We can give the multiplicative structure on $\tilde{\CF'^0}
\oplus \tilde{\CF'^1}$ s.t it becomes isomorphic to $\tilde{\CF} \in
\CD_{qc}(U)$. Now we can lift this multiplicative structure to $\CF' := \CF'^0
\oplus \CF'^1$ using fully faithfulness of $ff(j^*)$. Hence $j^*$ is essentially
surjective.
         \end{proof}
Another notion which we need is compact object in a triangulated category and
compactly generated triangulated category.
         \begin{definition}
         \begin{enumerate}
            \item[(a)] An object $t$ in a triangulated category, which is closed
under formation of every small coproducts, is said to be \emph{compact} if
$Hom(t, \_ )$ respects coproducts. In a triangulated category $\CT$, the full
subcategory of all compact objects is denoted as $\CT^c$.
            \item[(b)] A triangulated category $\CT$, which is closed under
formation of every small coproducts, is said to be \emph{compactly generated} if
there exists a small set $T$ of compact objects s.t. $\CT$ is a smallest
triangulated subcategory closed under coproducts and distinguished triangles
containing $T$. Equivalently, $\CT$ is called \emph{compactly generated} iff
$T^{\perp} := \{ x \in \CT | Hom_{\CT}(t ,x) = 0 \mbox{ for all } t \in T  \} =
0$. The set of compact objects $T$ is called \emph{generating set} if further
$T$ is closed under suspension or translation. 
         \end{enumerate}
         \end{definition}
          Now we shall recall the theorem 2.1 of Neeman\cite{Neeman1} which is
proved in quite generality and is a slight strengthening of theorem 2.1 of
Neeman\cite{Neeman3}.
         \begin{theorem}[Neeman\cite{Neeman3}\cite{Neeman1}] 
          \label{Neeman}
          Let $\CS$ be a compactly generated triangulated category. 
          Let $R$ be set of compact objects of $\CS$ closed under suspension and
$\CR$ be a localising subcategory generated by $R$ in triangulated category
$\CS$. Under these hypothesis\cite{Neeman3} there exists the quotient category
$\CT$ of $\CS$ with adjoint functor of natural functor $j^*$ i.e. there is
following sequence of triangulated categories,
              \[
                \CR \rightarrow \CS \xrightarrow{j^*} \CT . 
              \]
         It induces a functors at the level of subcategories of compact objects
i.e.
              \[
                \CR^c \rightarrow \CS^c \xrightarrow{j^*} \CT^c . 
              \]
         \begin{enumerate}
         \item The category $\CR$ is compactly generated, with $R$ as a
generating set.
         \item If $R$ is generating set for all of $\CS$ then $\CR = \CS$.
         \item If $R \subset \CR$ is closed under the formation of triangles and
direct summands, then it is all of $\CR^c$. In any case $\CR^c = \CR \cap
\CS^c$. 
         \item The induced functor $ F :\CS^c / \CR^c \to \CT^c$ is fully
faithful and every object of $\CT^c$ is isomorphic to direct summand of image of
the functor $F$. In particular, if $\CT^c$ is an idempotent complete then we get
an equivalence from idempotent completion $\widetilde{\CS^c / \CR^c}$ to
triangulated category $\CT^c$.
         \end{enumerate}
         \end{theorem}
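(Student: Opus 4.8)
The plan is to follow Neeman's argument, whose engine is Brown representability for compactly generated triangulated categories. Since $\CS$ is compactly generated, every cohomological functor $\CS^{\mathrm{op}}\to\mathrm{Ab}$ sending coproducts to products is representable; applying this to the inclusion of $\CR$, the localizing subcategory generated by $R$, produces a right adjoint to $\CR\hookrightarrow\CS$, i.e.\ a Bousfield colocalization. From this everything follows formally: the Verdier quotient $\CT:=\CS/\CR$ exists with no set-theoretic difficulty, the quotient functor $j^*:\CS\to\CT$ has a right adjoint $j_*$ with $j^*j_*\cong\mathrm{Id}$, and each $x\in\CS$ sits in a natural triangle $\Gamma x\to x\to j_*j^*x\to\Gamma x[1]$ with $\Gamma x\in\CR$ and $j_*j^*x\in\CR^{\perp}$. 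The rest of the theorem is extracted from this localization triangle.

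For part (1), objects of $R$ remain compact when regarded inside $\CR$, because coproducts formed in the localizing subcategory $\CR$ agree with those formed in $\CS$; and if $x\in\CR$ satisfies $\mathrm{Hom}(r,x[n])=0$ for all $r\in R$ and $n\in\Z$, then $x$ lies in the localizing subcategory $\{y\in\CS:\mathrm{Hom}(y,x[\ast])=0\}$, which contains $R$ and hence contains $\CR$, forcing $\mathrm{Hom}(x,x)=0$, so $x=0$; thus $R$ compactly generates $\CR$. Part (2) is then immediate: if $R^{\perp}=0$ in $\CS$ then $j_*j^*x\in\CR^{\perp}\subseteq R^{\perp}=0$ for every $x$, so $x\cong\Gamma x\in\CR$ and $\CR=\CS$. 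For part (3) I would invoke the standard fact (proved by the Bousfield--Ravenel telescope construction) that in a compactly generated category the compact objects form precisely the thick subcategory generated by any chosen set of compact generators: inside $\CR$ this gives $\CR^c=\mathrm{thick}_{\CR}(R)=\mathrm{thick}_{\CS}(R)\subseteq\CS^c$, while the reverse inclusion $\CR\cap\CS^c\subseteq\CR^c$ is again the coproduct remark, yielding $\CR^c=\CR\cap\CS^c$; the same fact shows $R=\CR^c$ as soon as $R$ is closed under triangles and direct summands.

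Part (4) is the heart. First one checks that $j^*$ preserves compactness, so that the sequence descends to $\CR^c\to\CS^c\xrightarrow{j^*}\CT^c$; equivalently $j_*$ preserves coproducts, which holds because the colocalization $\Gamma$ preserves coproducts --- and it does precisely because $\CR$ is generated by objects that are compact in $\CS$ (the localization is smashing). Full faithfulness of $F:\CS^c/\CR^c\to\CT^c$ is the Thomason--Neeman computation: for $a,b\in\CS^c$ one has $\mathrm{Hom}_{\CT}(j^*a,j^*b)=\varinjlim\mathrm{Hom}_{\CS}(a',b)$ over the filtered system of maps $a'\to a$ whose cone lies in $\CR$, and one shows that the subsystem of those maps with $a'\in\CS^c$ and cone in $\CR^c$ is cofinal --- the trick being to write such a cone $c\in\CR$ as a homotopy colimit of $c_0\to c_1\to\cdots$ with each successive cone a finite coproduct of shifts of objects of $R$ (so every $c_n\in\CR^c$), then use compactness of $a$ to factor the connecting map $a\to c$ through some $c_n$, and pull back to replace $a'$ by a compact object with cone in $\CR^c$. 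That cofinal subsystem computes $\mathrm{Hom}_{\CS^c/\CR^c}(a,b)$, giving full faithfulness. For the image, any $t\in\CT^c$ lies in the localizing subcategory of $\CT$ generated by $j^*(\CS^c)$, and compactness forces $t$ to be a direct summand of an object built from finitely many $j^*(a_i)$ by finitely many triangles, hence a summand of $j^*a$ with $a\in\CS^c$. The final clause is then purely formal: a fully faithful functor whose image is dense up to summands extends uniquely to an equivalence out of the idempotent completion, so $\widetilde{\CS^c/\CR^c}\xrightarrow{\sim}\CT^c$ whenever $\CT^c$ is idempotent complete.

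The steps I expect to be genuinely delicate are exactly the two places where compactness must be forced to interact with infinite coproducts: that $\Gamma$ (equivalently $j_*$) commutes with coproducts, and the cofinality argument underpinning full faithfulness. Both rely on the telescope technique --- presenting objects of $\CR$ as homotopy colimits of objects of $\CR^c$ --- combined with compactness of $a$ to push a map through a finite stage. Once those are secured, parts (1)--(3) and the density statement are routine manipulations with octahedra, adjunctions, and orthogonality.
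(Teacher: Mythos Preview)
The paper does not give its own proof of this theorem: it is explicitly \emph{recalled} from Neeman's work (``we shall recall here the theorem 2.1 of Neeman\ldots'') and then immediately applied, with no argument supplied. So there is nothing in the paper to compare your proposal against beyond the citation itself.

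That said, your sketch is a faithful outline of Neeman's actual argument: Brown representability gives the Bousfield (co)localization, the telescope construction presents objects of $\CR$ as homotopy colimits of objects of $\CR^c$, and the cofinality argument you describe is exactly how full faithfulness in part (4) is established in \cite{Neeman3}. The two points you flag as delicate --- that the localization is smashing (so $j_*$ preserves coproducts) and the cofinality step --- are indeed the substantive ones, and you have identified the right mechanism (compactness pushing a map through a finite stage of a telescope) for both. Since the paper simply defers to Neeman, your proposal is in effect a correct expansion of what the paper cites rather than an alternative to anything the paper does.
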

In our particular situation we take $ \CS := \CD_{qc}(X) , \CR := \CD_{qc ,
Z}(X)$ and as we proved above\ref{local} the quotient will be $\CT :=
\CD_{qc}(U)$. We shall now prove following result which will provide all
hypothesis for application of above theorem.
         \begin{proposition}
         following statements are true for any split superscheme $(X , \CO_X)$
            \begin{enumerate}
             \item The triangulated category $\CD_{qc}(X)$ is closed under
formation of every small        coproducts.
             \item The triangulated category $\CD_{qc}(X)$ is a compactly
generated category.
             \item $\CD_{qc , Z}(X)^c \simeq \CD^{per}_Z (X)$ for any closed
subset $Z$ of $X$.
            \end{enumerate}
         \end{proposition}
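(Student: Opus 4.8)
The plan is to reduce each of the three assertions to the corresponding classical statement about the ordinary scheme $X_{rd}$, transported along the forgetful functor $ff\colon\CD_{qc}(X)\to\CD_{qc}(X_{rd})\times\CD_{qc}(X_{rd})$ and its left adjoint. Recall that, $X$ being split, $\CO_X$ is a finite locally free sheaf of $\CO_{X_{rd}}$-modules (Proposition~\ref{denseness} and the remarks before it); in particular it is flat and of full support over $\CO_{X_{rd}}$, so restriction of scalars along $\CO_{X_{rd}}\hookrightarrow\CO_X$ is exact, and so is its left adjoint, the extension of scalars $e(\CG):=\CO_X\otimes^{\mathbf{L}}_{\CO_{X_{rd}}}\CG$, which sends perfect complexes on $X_{rd}$ to perfect complexes on $X$. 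The functor $ff$ is exact, faithful and commutes with arbitrary coproducts, hence conservative, and composing $ff$ with $(\CA,\CB)\mapsto\CA\oplus\CB$ is exactly restriction of scalars; therefore $\operatorname{Hom}_{\CD_{qc}(X)}(e(\CG),\CF)\cong\operatorname{Hom}_{\CD_{qc}(X_{rd})}(\CG,\CF^0\oplus\CF^1)$, naturally in $\CG$ and $\CF$, and $\operatorname{Hom}_{\CD_{qc}(X)}(e(\CG),-)$ commutes with coproducts whenever $\operatorname{Hom}_{\CD_{qc}(X_{rd})}(\CG,-)$ does.

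For (1): by Lemma~\ref{sheaf diag} a small coproduct in $\qcoh(X)$ is computed componentwise, as the pair $(\bigoplus_\alpha\CF^0_\alpha,\bigoplus_\alpha\CF^1_\alpha)$ equipped with the coproduct of the multiplication maps; such coproducts are exact because exactness is tested in each $\CO_{X_{rd}}$-component, so $\CD_{qc}(X)=\CD(\qcoh(X))$ is closed under small coproducts. For (2): choose a set $\{\CG_i\}$ of compact generators of $\CD_{qc}(X_{rd})$ (Neeman~\cite{Neeman1}; $X_{rd}$ is taken noetherian, as elsewhere). Then $\{e(\CG_i)\}$ is a set of compact generators of $\CD_{qc}(X)$: each $e(\CG_i)$ is compact by the displayed adjunction, since $ff$ and formation of $\CF^0\oplus\CF^1$ commute with coproducts and the $\CG_i$ are compact; and the $e(\CG_i)$ generate because $\operatorname{Hom}_{\CD_{qc}(X)}(e(\CG_i)[n],\CF)=0$ for all $i,n$ forces $\CF^0\oplus\CF^1=0$ in $\CD_{qc}(X_{rd})$, whence $\CF=0$ by conservativity of $ff$.

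For (3): apply Neeman's Theorem~\ref{Neeman} with $\CS=\CD_{qc}(X)$ and $\CR=\CD_{qc,Z}(X)$. By Thomason--Trobaugh~\cite{Thomason1} choose a set $\{\CG_i\}$ of perfect complexes on $X_{rd}$ supported on $Z$ generating $\CD_{qc,Z}(X_{rd})$ as a localising subcategory; put $R_0:=\{e(\CG_i)\}$ and let $R$ be its suspension closure. Each $e(\CG_i)$ is a perfect complex on $X$ supported on $Z$, because $\CO_X$ is flat with full support over $\CO_{X_{rd}}$. Moreover every perfect complex $\CF$ on $X$ is compact in $\CS$: writing $\operatorname{Hom}_{\CD_{qc}(X)}(\CF,-)\cong H^{0}R\Gamma(X,\CF^{\vee}\otimes^{\mathbf{L}}-)$ and using that $R\Gamma(X,-)$ on $\CO_X$-modules agrees with $R\Gamma(X_{rd},(-)^0\oplus(-)^1)$, which commutes with coproducts on the noetherian scheme $X_{rd}$, gives the claim. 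Hence $R$ is a suspension-closed set of compact objects of $\CS$, and the argument of (2) shows that the localising subcategory it generates is exactly $\CR=\CD_{qc,Z}(X)$. Part~(3) of Theorem~\ref{Neeman} then gives $\CR^c=\CR\cap\CS^c$ and identifies it with the thick subcategory of $\CR$ generated by $R_0$. Now any perfect complex supported on $Z$ lies in $\CR$ and, being compact in $\CS$, lies in $\CR\cap\CS^c=\CR^c$, so $\CD^{per}_Z(X)\subseteq\CR^c$. Conversely the perfect complexes on $X$ supported on $Z$ form a thick subcategory — closure under cones is clear since perfect complexes form a triangulated subcategory and supports add, and closure under direct summands is local, reducing to the idempotent completeness of the homotopy category of bounded complexes of finitely generated projective modules over a supercommutative noetherian local ring — so the thick closure of $R_0$, hence $\CR^c$, is contained in $\CD^{per}_Z(X)$. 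Therefore $\CD_{qc,Z}(X)^c\simeq\CD^{per}_Z(X)$.

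The main obstacle is part (3). Three points need care: producing a generating set of $\CD_{qc,Z}(X)$ consisting of perfect complexes supported on $Z$, for which pushing a classical Thomason--Trobaugh generating set along $\CO_{X_{rd}}\to\CO_X$ is the essential device; checking that perfect complexes on the superscheme really are compact in $\CD_{qc}(X)$, which is again reduced to the corresponding cohomological fact on $X_{rd}$; and knowing that perfect complexes supported on $Z$ are closed under direct summands. Parts (1) and (2), together with the adjunction formalism on which the whole argument rests, are routine once the finite local freeness of $\CO_X$ over $\CO_{X_{rd}}$ is recorded.
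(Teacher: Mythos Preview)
Your argument is correct and takes a genuinely different route from the paper, chiefly in parts (2) and (3).

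For (2), the paper produces generators by pushing perfect complexes on $X_{rd}$ into $\CD_{qc}(X)$ via the functor ${\bf i_{rd}}$ (restriction of scalars along the projection $\CO_X\twoheadrightarrow\CO_{X_{rd}}$) together with its parity shift, and then invokes the Postnikov tower attached to the $J_X$-adic filtration (Proposition~\ref{denseness}) to see that every object lies in the localising subcategory these generate. You instead use the \emph{left adjoint} $e=\CO_X\otimes_{\CO_{X_{rd}}}^{\mathbf L}(-)$ of restriction along the inclusion $\CO_{X_{rd}}\hookrightarrow\CO_X$, and deduce both compactness and generation from the single adjunction identity $\operatorname{Hom}_X(e(\CG),\CF)\cong\operatorname{Hom}_{X_{rd}}(\CG,\CF^0\oplus\CF^1)$. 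This buys you compactness of the generators for free, since the right adjoint preserves coproducts; by contrast the objects ${\bf i_{rd}}(\CG)$ used in the paper are typically not perfect over $\CO_X$ (already $\CO_{X_{rd}}$ has infinite projective dimension over an exterior algebra), so their compactness is not immediate from the argument given there. The paper's filtration picture, on the other hand, gives a concrete d\'evissage that it reuses elsewhere in the section.

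For (3), the paper argues only that perfect complexes on $X$ are compact and that they form a thick subcategory, treating the passage to an arbitrary closed $Z$ as implicit. You carry out that passage explicitly: you transport a Thomason--Trobaugh generating set for $\CD_{qc,Z}(X_{rd})$ along $e$, check that the images lie in $\CD^{per}_Z(X)$ because $\CO_X$ is finite locally free of full support over $\CO_{X_{rd}}$, verify that they generate $\CD_{qc,Z}(X)$ via the same adjunction trick, and then conclude from part~(3) of Theorem~\ref{Neeman} together with thickness of $\CD^{per}_Z(X)$. This is the same overall strategy (reduce to Neeman plus the known statement on $X_{rd}$), but your version is more complete on exactly the point the paper leaves to the reader.
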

         \begin{proof}
         \emph{ Proof of 1.} This is similar to usual scheme case, see example
1.3 of Neeman\cite{Neeman1}.

         \emph{Proof of 2.} Take a small set of objects $T \subset \CD_{qc}(X)$
of all perfect complexes of $\CO_{X_{rd}}$ modules via functor ${\bf i_{rd}}$
and its full image under the functor $\Pi$. Now using the canonical filtration
and result\ref{denseness} we have following Postnikov tower for every object
$\CF \in \CD_{qc}(X)$,
            \begin{displaymath}
             \xymatrix{
                      \CF \ar[rd]& & \ar[ll] \CG^1 &\cdots & \CG_{n-1}\ar[rd]&
&\ar[ll] \CG_n \ar@{=}[rd]& \\
                       & \CF_1 \ar@{-->}[ru] & &\cdots && \CF_{n-1}
\ar@{-->}[ru] &  & \CF_n
                      }.
            \end{displaymath}
         Now the base of above tower,$\CF_i := \CF \otimes_{\CO_X} \Pi^i
\Lambda^i(\CV) \in Im({\bf i_{rd}})$, is generated by objects of the set $T$ and
hence every object $\CF \in \CD_{qc}(X)$ is generated by the set $T$.
         
         \emph{ Proof of 3.} It is enough to prove that all perfect complexes
are compact objects. Indeed, the full subcategory of perfect complexes is closed
under triangles and direct summands similar to usual scheme. Hence by taking $R$
to be all perfect complexes the above result of Neeman\ref{Neeman} proves that
all compact objects are perfect complexes. Now to prove that every perfect
complex is compact object we have to first observe following,
             \[
                (H^0(\CR \CH om(\CF , \CG)))^0 = Hom_{\CO_X}(\CF , \CG).
             \]
         Here $\CR \CH om(\CF , \CG)$ is an internal homomorphisms between $\CF$
and $\CG$. Now rest of the proof is similar to the proof given in example 1.13 of
Neeman\cite{Neeman1}.
         \end{proof}

Using the above result\ref{Neeman} it is easy to deduce following corollary,
         \begin{corollary}
          \label{reconstruction}
          Given a split superscheme $(X , \CO_X)$ we have an equivalence of
tensor triangulated categories, $F : \widetilde{\CD^{per}(X) /  \CD^{per}_Z(X)}
\xrightarrow{\sim} \CD^{per}(U)$.
         \end{corollary}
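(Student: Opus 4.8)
The plan is to read this off Neeman's localisation theorem~\ref{Neeman}, applied to the ambient compactly generated category $\CS := \CD_{qc}(X)$ with localising subcategory $\CR := \CD_{qc,Z}(X)$. All the ingredients have already been assembled above: $\CS$ is closed under small coproducts and compactly generated, its subcategory of compact objects is $\CS^c = \CD^{per}(X)$, and $\CR \cap \CS^c = \CD_{qc,Z}(X)^c \simeq \CD^{per}_Z(X)$. The one point still to be pinned down is that $\CR$ is generated \emph{as a localising subcategory} by a set $R$ of compact objects of $\CS$ closed under suspension, equivalently that $\CD_{qc,Z}(X)$ is generated by perfect complexes on $X$ whose homological support lies in $Z$. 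First I would establish this by the same Postnikov-tower reduction to $X_{rd}$ used in the proof of compact generation of $\CD_{qc}(X)$: for the ordinary scheme $X_{rd}$ the subcategory $\CD_{qc,Z}(X_{rd})$ is generated by perfect complexes supported on $Z$ (Koszul-type complexes), and pushing these forward along the dominant tensor inclusion $\mathbf{i_{rd}}$ (Proposition~\ref{denseness}) together with $\Pi$, then filtering an arbitrary object of $\CD_{qc,Z}(X)$ by the canonical filtration by powers of $J_X$, shows that this same set generates $\CR$.

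With such an $R$ in hand, Theorem~\ref{Neeman} applies verbatim. Parts (1)--(3) confirm that $\CR$ is the localising subcategory generated by $R$ and that $\CR^c = \CR \cap \CS^c = \CD^{per}_Z(X)$, while the quotient is $\CT := \CD_{qc}(X)/\CD_{qc,Z}(X)$, which by Proposition~\ref{local} is equivalent to $\CD_{qc}(U)$. Part (4) then gives that the induced functor $\CS^c/\CR^c \to \CT^c$ is fully faithful and that every object of $\CT^c$ is a direct summand of one in its image, hence that it extends to an equivalence $\widetilde{\CS^c/\CR^c} \xrightarrow{\sim} \CT^c$. Since $U$ is again a split superscheme, the identification $\CT^c = \CD_{qc}(U)^c = \CD^{per}(U)$ holds by the same compact-object computation applied to $U$; substituting $\CS^c = \CD^{per}(X)$ and $\CR^c = \CD^{per}_Z(X)$ yields the asserted equivalence $F : \widetilde{\CD^{per}(X)/\CD^{per}_Z(X)} \xrightarrow{\sim} \CD^{per}(U)$.

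Finally I would check that $F$ respects the tensor structure. The restriction functor $j^* : \CD_{qc}(X) \to \CD_{qc}(U)$ is a tensor functor and $\CD_{qc,Z}(X)$ is a thick tensor ideal, so the tensor product descends to the Verdier quotient $\CD^{per}(X)/\CD^{per}_Z(X)$; it restricts to the subcategories of compact objects (the tensor of two perfect complexes is perfect) and extends canonically to the idempotent completion, so every functor in sight is a tensor functor and $F$ is an equivalence of tensor triangulated categories. The only genuine obstacle is the first step, producing the compact generating set $R$ for $\CR$ inside $\CS$; everything after that is bookkeeping around Theorem~\ref{Neeman} and Proposition~\ref{local}.
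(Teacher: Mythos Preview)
Your proposal is correct and follows the paper's route: apply Neeman's theorem with $\CS=\CD_{qc}(X)$, $\CR=\CD_{qc,Z}(X)$, $\CT\simeq\CD_{qc}(U)$ (via Proposition~\ref{local}), identify compact objects via the preceding proposition, and then note that $j^*$ is a tensor functor so the resulting equivalence is one of tensor triangulated categories. The paper's own proof is a single sentence (``It is enough to observe that $j^*$ induces a tensor functor''), treating the triangulated equivalence as already read off from Theorem~\ref{Neeman}; you are simply more explicit than the paper about one hypothesis it leaves tacit, namely that $\CD_{qc,Z}(X)$ is generated as a localising subcategory by a set of compact objects, which you correctly supply via the Postnikov filtration and $\mathbf{i_{rd}}$.
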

         \begin{proof}
          It is enough to observe that $j^*$ induces a tensor functor.
         \end{proof}
Similar to Balmer\cite{PB2} we shall use above localisation result to give
relation between structure sheaves. Balmer\cite{PB2} had defined structure sheaf
of $\Spc(\CK)$ for any tensor triangulated category as a sheaf associated to the
presheaf given by $ U \mapsto End_{\CK / \CK_Z}(1_U)$ where $U$ is an open set
and $1_U \in (\CK / \CK_Z)$ is the image of tensor unit $1 \in \CK$. Define
$\Spec(\CD^{per}(X)) := ( \Spc(\CD^{per}(X)) , \CO_{\CD^{per}(X)})$ the locally
ringed space associated to tensor triangulated category $\CD^{per}(X)$. Now the
homeomorphism $f$\ref{top spc} defined above for split superscheme gives a map
of locally ringed space, $f : (X \simeq X^0 , \CO_{X^0}) \to \Spec(\CD^{per}(X))
$. Here the map of structure sheaves comes from the identification given in
corollary\ref{reconstruction}. We have following result similar to Theorem 6.3
of Balmer\cite{PB2},
         \begin{theorem}
         Suppose $X$ is a topologically noetherian (that is, if all open subset
are quasi compact) split superscheme. The map $f$ defined as above gives an
isomorphism of locally ringed space i.e. $X^0 \simeq \Spec(\CD^{per}(X))$.
         \end{theorem}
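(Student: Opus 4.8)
The plan is to transcribe Balmer's proof of [Theorem 6.3, \cite{PB2}] into the super setting, using the ingredients already assembled above. The topological half is done: by Corollary \ref{top spc} the canonical map $f$ is a homeomorphism once $\Spc(\CD^{per}(X))$ is identified with the underlying space of $X$ (equivalently of $X^0$). So the entire remaining content is to upgrade $f$ to an isomorphism of locally ringed spaces, i.e. to produce an isomorphism $f^{\#}\colon \CO_{\CD^{per}(X)} \xrightarrow{\sim} f_{*}\CO_{X^0}$ compatible with $f$.

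Recall that $\CO_{\CD^{per}(X)}$ is the sheafification of the presheaf $U \mapsto \End_{\CD^{per}(X)/\CD^{per}_{Z}(X)}(1_{U})$, where $Z = X\setminus U$ and $1_{U}$ is the image of the tensor unit $\CO_X$ under the Verdier quotient. Fix an open $U \subseteq X$ with open immersion $j\colon U \hookrightarrow X$; since $X$ is topologically noetherian, $U$ is quasi-compact and, being open in a split superscheme, is again a topologically noetherian split superscheme, so Corollary \ref{reconstruction} (whose proof rests on Theorem \ref{Neeman}) applies with the closed set $Z = X\setminus U$ and gives a tensor equivalence $\widetilde{\CD^{per}(X)/\CD^{per}_{Z}(X)} \xrightarrow{\sim} \CD^{per}(U)$ carrying $1_{U}$ to $j^{*}\CO_X = \CO_U$. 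Idempotent completion is fully faithful and $1_{U}$ already lies in $\CD^{per}(X)/\CD^{per}_{Z}(X)$, so this yields a natural ring isomorphism
\[
\End_{\CD^{per}(X)/\CD^{per}_{Z}(X)}(1_{U}) \;\cong\; \End_{\CD^{per}(U)}(\CO_U).
\]

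Next I would compute the right-hand side. Since $\CO_U$ is a single sheaf placed in degree zero and $\Mod(\CO_U)$ embeds fully faithfully in its derived category, $\End_{\CD^{per}(U)}(\CO_U) = \operatorname{Hom}_{\CO_U}(\CO_U,\CO_U)$; and because morphisms in $\Mod(\CO_U)$ are grade-preserving by convention, such a morphism is multiplication by a uniquely determined \emph{even} global section, whence
\[
\End_{\CD^{per}(U)}(\CO_U) \;=\; \Gamma(U, \CO_{U,0}) \;=\; \Gamma\bigl(U, \CO_{X^0}\bigr).
\]
All the identifications above are induced by the localization functors $j^{*}$, hence are compatible with the restriction maps of the presheaves; thus $U \mapsto \End_{\CD^{per}(X)/\CD^{per}_{Z}(X)}(1_{U})$ is already isomorphic, as a presheaf of rings, to the sheaf $U \mapsto \Gamma(U, \CO_{X^0})$. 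Consequently the sheafification step is a non-issue and $f^{\#}$ is an isomorphism $\CO_{\CD^{per}(X)} \xrightarrow{\sim} f_{*}\CO_{X^0}$. On stalks $f^{\#}_x$ is then a ring isomorphism onto the local ring $\CO_{X^0,x}$, in particular a local homomorphism, so $f$ is an isomorphism of locally ringed spaces.

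The main obstacle I anticipate is the last paragraph together with its naturality: one must be careful that the $\Z/2\Z$-grading conventions force $\End_{\CD^{per}(U)}(\CO_U)$ to be the \emph{even} part $\Gamma(U,\CO_{X,0})$ rather than all of $\Gamma(U,\CO_X)$, and that the isomorphism $\End_{\CD^{per}(X)/\CD^{per}_{Z}(X)}(1_{U}) \cong \Gamma(U,\CO_{X^0})$ genuinely commutes with the restriction maps coming from the further Verdier quotients as $U$ shrinks. Granting the super-analogues already proved — the classification bijection $\theta$ and the localization equivalence of Corollary \ref{reconstruction} — everything else is a faithful transcription of Balmer's argument.
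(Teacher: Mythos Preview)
Your proposal is correct and follows essentially the same route as the paper: both invoke the homeomorphism of Corollary~\ref{top spc} for the topological part, then use the super-localisation equivalence of Corollary~\ref{reconstruction} to identify the presheaf $U\mapsto \End_{\CD^{per}(X)/\CD^{per}_Z(X)}(1_U)$ with $U\mapsto \End_{\CD^{per}(U)}(\CO_U)=\Gamma(U,\CO_{X,0})$, exactly as in Balmer's Theorem~6.3. The paper's version is terser (it reduces to the affine case and cites Balmer's Remark~8.2 rather than writing out the endomorphism computation), but the ingredients and logical structure are the same as yours.
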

         \begin{proof}
          Using the homeomorphism $f$ it is enough to prove isomorphism of
structure sheaves. Hence we can assume that superscheme is affine. Now using the
remark 8.2 of Balmer\cite{PB1} and localisation theorem\ref{reconstruction} we
can prove that induced map of sheaves is an isomorphism.
         \end{proof}

\bibliography{ref}

\def\cprime{$'$}
\begin{thebibliography}{10}

\bibitem{PB1}
Paul Balmer.
\newblock Presheaves of triangulated categories and reconstruction of schemes.
\newblock {\em Math. Ann.}, 324(3):557--580, 2002.

\bibitem{PB2}
Paul Balmer.
\newblock The spectrum of prime ideals in tensor triangulated categories.
\newblock {\em J. Reine Angew. Math.}, 588:149--168, 2005.

\bibitem{Bondal1}
A.~Bondal and M.~van~den Bergh.
\newblock Generators and representability of functors in commutative and
  noncommutative geometry.
\newblock {\em Mosc. Math. J.}, 3(1):1--36, 258, 2003.

\bibitem{Orlov2}
Alexei Bondal and Dmitri Orlov.
\newblock Reconstruction of a variety from the derived category and groups of
  autoequivalences.
\newblock {\em Compositio Math.}, 125(3):327--344, 2001.

\bibitem{Gabriel}
Pierre Gabriel.
\newblock Des cat\'egories ab\'eliennes.
\newblock {\em Bull. Soc. Math. France}, 90:323--448, 1962.

\bibitem{Manin2}
Sergei~I. Gelfand and Yuri~I. Manin.
\newblock {\em Methods of homological algebra}.
\newblock Springer-Verlag, Berlin, 1996.
\newblock Translated from the 1988 Russian original.

\bibitem{Grothendieck1}
Alexander Grothendieck.
\newblock Sur quelques points d'alg\`ebre homologique.
\newblock {\em T\^ohoku Math. J. (2)}, 9:119--221, 1957.

\bibitem{Hartshorne2}
Robin Hartshorne.
\newblock {\em Residues and duality}.
\newblock Lecture notes of a seminar on the work of A. Grothendieck, given at
  Harvard 1963/64. With an appendix by P. Deligne. Lecture Notes in
  Mathematics, No. 20. Springer-Verlag, Berlin, 1966.

\bibitem{Manin3}
Yu.~I. Manin.
\newblock New dimensions in geometry.
\newblock In {\em Workshop {B}onn 1984 ({B}onn, 1984)}, volume 1111 of {\em
  Lecture Notes in Math.}, pages 59--101. Springer, Berlin, 1985.

\bibitem{Manin1}
Yuri~I. Manin.
\newblock {\em Gauge field theory and complex geometry}, volume 289 of {\em
  Grundlehren der Mathematischen Wissenschaften [Fundamental Principles of
  Mathematical Sciences]}.
\newblock Springer-Verlag, Berlin, 1988.
\newblock Translated from the Russian by N. Koblitz and J. R. King.

\bibitem{Mukai}
Shigeru Mukai.
\newblock Duality between {$D(X)$} and {$D(\hat X)$} with its application to
  {P}icard sheaves.
\newblock {\em Nagoya Math. J.}, 81:153--175, 1981.

\bibitem{Mumford}
David Mumford.
\newblock {\em Abelian varieties}, volume~5 of {\em Tata Institute of
  Fundamental Research Studies in Mathematics}.
\newblock Published for the Tata Institute of Fundamental Research, Bombay,
  2008.
\newblock With appendices by C. P. Ramanujam and Yuri Manin, Corrected reprint
  of the second (1974) edition.

\bibitem{Neeman3}
Amnon Neeman.
\newblock The connection between the {$K$}-theory localization theorem of
  {T}homason, {T}robaugh and {Y}ao and the smashing subcategories of
  {B}ousfield and {R}avenel.
\newblock {\em Ann. Sci. \'Ecole Norm. Sup. (4)}, 25(5):547--566, 1992.

\bibitem{Neeman1}
Amnon Neeman.
\newblock The {G}rothendieck duality theorem via {B}ousfield's techniques and
  {B}rown representability.
\newblock {\em J. Amer. Math. Soc.}, 9(1):205--236, 1996.

\bibitem{ramanan}
S.~Ramanan.
\newblock Orthogonal and spin bundles over hyperelliptic curves.
\newblock In {\em Geometry and analysis}, pages 151--166. Indian Acad. Sci.,
  Bangalore, 1980.

\bibitem{Rosenberg}
Alexander~L. Rosenberg.
\newblock Noncommutative schemes.
\newblock {\em Compositio Math.}, 112(1):93--125, 1998.

\bibitem{Thomason2}
R.~W. Thomason.
\newblock The classification of triangulated subcategories.
\newblock {\em Compositio Math.}, 105(1):1--27, 1997.

\bibitem{Thomason1}
R.~W. Thomason and Thomas Trobaugh.
\newblock Higher algebraic {$K$}-theory of schemes and of derived categories.
\newblock In {\em The {G}rothendieck {F}estschrift, {V}ol.\ {III}}, volume~88
  of {\em Progr. Math.}, pages 247--435. Birkh\"auser Boston, Boston, MA, 1990.

\bibitem{Verdier}
Jean-Louis Verdier.
\newblock Des cat\'egories d\'eriv\'ees des cat\'egories ab\'eliennes.
\newblock {\em Ast\'erisque}, (239):xii+253 pp. (1997), 1996.
\newblock With a preface by Luc Illusie, Edited and with a note by Georges
  Maltsiniotis.

\end{thebibliography}
\bibliographystyle{plain}
\end{document}